\numberwithin{equation}{section}
\let\cal\mathcal
\def\Lie{\mathop{\text{Lie}}}
\def\gkdim{\operatorname {GK dim}}
\def\gldim{\operatorname {gl\,dim}}
\def\deg{\operatorname {deg}}
\newcommand{\Span}{\mathop{\mathrm{Span}}}
\newtheorem{lemma}{Lemma}[section]
\newtheorem{proposition}[lemma]{Proposition}
\newtheorem{theorem}[lemma]{Theorem}
\newtheorem*{theorem1}{Theorem I}
\newtheorem*{theorem2}{Theorem II}
\newtheorem*{theorem3}{Theorem III}
\newtheorem*{theorem4}{Theorem IV}
\newtheorem*{proofTheor}{Proof of Theorem}
\newtheorem{hypothesis}[lemma]{Hypothesis}
\newtheorem*{proof1}{Proof of Theorem I}
\newtheorem*{proof2}{Proof of Theorem II}
\newtheorem*{proof3}{Proof of Theorem III}
\newtheorem{question}[lemma]{Question}
\newtheorem{corollary}[lemma]{Corollary}
\newtheorem{conjecture}[lemma]{Conjecture}
\newtheorem{fact}[lemma]{Fact}
\newtheorem{facts}[lemma]{Facts}
\theoremstyle{definition}
\newtheorem{example}[lemma]{Example}
\newtheorem{definition}[lemma]{Definition}
\newtheorem{notation}[lemma]{Notation}
\newtheorem{defconvention}[lemma]{Definition-Convention}
\newtheorem{defnotation}[lemma]{Definition-Notation}
\newtheorem{notconvention}[lemma]{Notation-Convention}
\newtheorem{openquestion}[lemma]{Open Question}
\newtheorem{problem}[lemma]{Problem}
\theoremstyle{remark}
\newtheorem{remark}[lemma]{Remark}
\newcommand{\fib}{f}
\newcommand{\Ical}{\mbox{$\cal I$}}
\newcommand{\Lcal}{\mbox{$\cal L$}}
\newcommand{\Qcal}{\mbox{$\cal Q$}}
\newcommand{\asX}{\langle X \rangle}
\newdimen\uboxsep \uboxsep=1ex
\def\uboxn#1{\vtop to 0pt{\hrule height 0pt depth 0pt\vskip\uboxsep
\hbox to 0pt{\hss #1\hss}\vss}}
\def\uboxs#1{\vbox to 0pt{\vss\hbox to 0pt{\hss #1\hss}
\vskip\uboxsep\hrule height 0pt depth 0pt}}
\title
[Algebras defined by Lyndon words ] {Algebras defined by Lyndon words and Artin-Schelter regularity}
\keywords{Associative algebras, Artin-Schelter regular algebras, Polynomial growth, Global dimension,  Lyndon words, Graded Lie algebras, Universal enveloping algebras,  Finite presentability, normal forms (diamond lemma, term-rewriting)}
\subjclass{Primary  16E65, 16S38, 16S30, 16S15, 16S37, 16P90, 17B30, 17B35, 17B70}
\thanks{The author was partially supported by the Max Planck Institute for Mathematics (MPIM), Bonn, by the Max Planck Institute for Mathematics in the Sciences,
MiS, Leipzig, and by Grant KP-06 N 32/1 of 07.12.2019 of the Bulgarian National
Science Fund.}
\author{Tatiana Gateva-Ivanova}
\address{Max Planck Institute for Mathematics\\
Vivatsgasse 7\\
53111 Bonn\\
Germany\\
and\\
American University in Bulgaria\\
2700, Blagoevgrad\\
Bulgaria
 }
\email{tatyana@aubg.edu}
\begin{document}

\date{\today}
\begin{abstract}
Let $X= \{x_1, x_2, \cdots,  x_n\}$  be a finite alphabet, and let $K$ be a field. We study classes $\mathfrak{C}(X, W)$ of graded
$K$-algebras $A = K\langle X\rangle / I$, generated by $X$ and with \emph{a fixed set of obstructions} $W$.  Initially we do not impose restrictions on $W$ and investigate the case when the algebras in $\mathfrak{C} (X, W)$ have polynomial growth and finite global dimension $d$. Next we consider classes $\mathfrak{C} (X, W)$
of algebras whose sets of obstructions $W$ are antichains of Lyndon words. The central question is "\emph{when a class $\mathfrak{C} (X, W)$ contains Artin-Schelter regular algebras?}"
Each class $\mathfrak{C} (X, W)$ defines a Lyndon pair $(N,W)$, which, if $N$ is finite, determines uniquely the global dimension, $\gldim A$, and the Gelfand-Kirillov dimension, $\gkdim A$, for every $A \in \mathfrak{C}(X, W)$. We find a combinatorial condition in terms of $(N,W)$, so that the class $\mathfrak{C}(X, W)$ contains the enveloping algebra $U\mathfrak{g}$, of a Lie algebra $\mathfrak{g}$. We introduce monomial Lie algebras defined by Lyndon words, and prove results on Gr\"{o}bner-Shirshov bases of Lie ideals generated by Lyndon-Lie monomials.
 Finally we classify all two-generated Artin-Schelter regular algebras of global dimension $6$ and $7$ occurring as enveloping $U = U\mathfrak{g}$ of \emph{standard monomial Lie algebras}. The classification is made in terms of their Lyndon pairs $(N, W)$, each of which determines also the explicit relations of $U$.
\end{abstract}

\maketitle

\setcounter{tocdepth}{1}
\tableofcontents

\section{Introduction}

Let $X= \{x_1, x_2, \cdots,  x_n\}$  be a finite alphabet.
We denote by $X^*$ the free monoid generated by $X$, and by
 $X^{+}$ -the free semigroup generated by $X$,
$X^{+}= X^* -\{1\}$, (the empty word is
denoted by $1$). Throughout the paper  $K\asX$ will denote the
free associative $K$-algebra generated by $X$, where $K$ is a field.
As usual, the length of a word $w\in X^{+}$ is denoted by $|w|$. We
shall consider the canonical grading by length on $K \asX$, assuming
each $x \in X$ has degree $1$.

In the paper we shall use three distinct orderings on $X^{+}$ (denoted by
distinct notation) which will be introduced gradually and used in
different contexts.

The  first order is the so called  "divisibility order", "$\sqsubset$".
This is a partial ordering on the set $X^{+}$ defined as:
$a\sqsubset b$ \emph{iff} $a$ is a proper subword (segment) of $b$,
i.e. $b = uav$, $|b|> |a|$, $u = 1$, or $v = 1$ is possible. In
the case when $b = av, a, v  \in X^{+} $,   $a$ is called \emph{a
proper left factor (segment) of} $b$. Proper right factors (segments) are
defined analogously.

Let $V$ be a set of monomials in $X^{+}$, $V \neq \emptyset$. A
monomial $w \in V$ is \emph{a minimal element of $V$ with respect to the ordering
"$\sqsubseteq$}" if it does not contain as a proper subword any
monomial  $u \in V$. Clearly, for any $u \in V$ there exists a unique
minimal element $w \in V$, with $w \sqsubseteq u.$

A nonempty subset $W\subseteq X^{+}$ is called \emph{an antichain of
monomials} (or shortly \emph{an antichain}) if any two elements $u,
v\in W, u \neq v$ are not comparable with respect to ``$\sqsubset$".
In other words, $W$ is an antichain of monomials if every monomial
$a \in W$ is minimal with respect to  ``$\sqsubset$".

The following is straightforward.
\begin{lemma}
\label{W_def} Let $V$ be  a nonempty set of monomials in   $X^{+}$.
Let
\[
\label{eqW_S} W=W_V= \{u \in V \mid u\; \text{is minimal in} \; V\;
\text{with respect to}\; \sqsubset\}. \] Then $W$ is  the unique maximal
antichain of monomials in $V$ such that every $u \in
V$ has a subword $w \in W$.
\end{lemma}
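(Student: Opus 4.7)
The plan is to prove the two parts of the lemma in tandem, treating ``unique maximal antichain'' as the unique antichain in $V$ with the covering property (every $u\in V$ has a subword in it); the covering property itself is exactly what makes $W$ non-extendable in a meaningful way.

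First I would dispatch the ``furthermore'' clause, which is the key structural fact and drives everything else. Given $u \in V$, consider the set $S_u = \{v \in V : v \sqsubseteq u\}$. This set is nonempty since $u \in S_u$, and because all its elements have length at most $|u|$, it is finite, so the restriction of the partial order $\sqsubset$ to $S_u$ admits a minimal element $w$. I would then check that $w$ is actually minimal in all of $V$, not merely in $S_u$: if there were some $w' \in V$ with $w' \sqsubset w$, then $w' \sqsubset w \sqsubseteq u$ would force $w' \in S_u$, contradicting minimality of $w$ inside $S_u$. Thus $w \in W$ and $w \sqsubseteq u$.

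Next I would establish that $W$ is an antichain. If $u, v \in W$ with $u \sqsubset v$, then $u$ is a proper subword of $v$ and $u \in V$, which contradicts $v$ being a minimal element of $V$ with respect to $\sqsubset$. So no two distinct elements of $W$ are comparable.

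Finally I would prove uniqueness: let $W' \subseteq V$ be any antichain such that every $u \in V$ has a subword in $W'$. To show $W' \subseteq W$, take $u \in W'$ and suppose for contradiction that $u$ is not minimal in $V$; then some $v \in V$ satisfies $v \sqsubset u$, and by the covering property for $W'$ there exists $w \in W'$ with $w \sqsubseteq v$, hence $w \sqsubset u$, contradicting the antichain property of $W'$. Conversely, to show $W \subseteq W'$, take $u \in W$ and use the covering property of $W'$ to obtain $w \in W'$ with $w \sqsubseteq u$; since $w \in V$ and $u$ is minimal in $V$, we must have $w = u$, hence $u \in W'$. The only step requiring care is the first paragraph, where well-foundedness of $\sqsubset$ (guaranteed by finiteness of word length) is the essential ingredient; everything else is formal manipulation of the antichain and minimality conditions, and I anticipate no real obstacle.
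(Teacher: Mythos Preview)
The paper does not actually prove this lemma; it simply introduces it with the phrase ``The following is straightforward'' and moves on. Your argument is correct and complete: the well-foundedness of $\sqsubset$ via word length gives the covering property, the antichain property is immediate from the definition of minimality, and your uniqueness argument is clean. You were right to flag the interpretation of ``unique maximal antichain'': read literally (maximal under inclusion among antichains contained in $V$), the claim is false, since for instance $V=\{xy,\,xyz\}$ has both $\{xy\}$ and $\{xyz\}$ as maximal antichains. The paper clearly intends the meaning you adopted --- the unique antichain $W\subseteq V$ such that every element of $V$ contains a subword from $W$ --- since that is exactly what is used throughout (e.g.\ in passing from $\overline{I}$ to the obstruction set).
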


Let $V$, and $W$ be as above. A monomial $a\in X^{\ast}$ is
$V$-\emph{normal} ($V$-\emph{standard}) if $a$ does not contain as a
subword any $u \in V$. Clearly $a$ is $V$-normal \emph{iff} $a$ is
$W$-normal. Denote by $\mathfrak{N}(W)$  the set of $W$-normal words
\[\mathfrak{N}(W) = \{a \in X^{\ast}\mid a \; \text{is $W$-normal}\}.\]
\emph{The set} $\mathfrak{N}(W)$ \emph{is closed under taking subwords},
Anick calls such a set  \emph{an order ideal of monomials}, or  \emph{o.i.m.},
\cite{Anick86}.

Conversely, each   nonempty order ideal of monomials $\mathfrak{N}
\subset X^{+}$ determines uniquely an antichain of monomials $W=
W(\mathfrak{N})$, so that $\mathfrak{N}= \mathfrak{N}(W)$. Indeed,
the complement $V = X^{\ast} - \mathfrak{N} $ has a unique maximal
antichain of monomials $W= W_V$, and the following implications are
in force.
\[\begin{array}{c}
a \in \mathfrak{N} \Longleftrightarrow \; a \;\text{is $W$- normal};\quad
v \in V \Longleftrightarrow w \sqsubseteq v, \quad\text{for
some}\quad w \in W. \end{array}\]
In this case   $W$ is called
\emph{the set of obstructions for} $\mathfrak{N}$, \cite{Anick86}.
Note that $W$ satisfies the following condition
\[
 W  = \{w \in X^{\ast}\mid w \; \text{is not in }\;
\mathfrak{N}\;\text{but any proper subword $u$ of $w$ is in}\;
\mathfrak{N} \}.
\]

The natural duality between $\mathfrak{N}$ and $W=W(\mathfrak{N})$ is discussed with more details in Section
 \ref{Lyndon_pairs_sec}.
We recall briefly some facts from (noncommutative)
Gr\"{o}bner bases theory.
By convention throughout the paper,
we consider
\emph{the degree-lexicographic ordering} $\prec$ on $X^{\ast}$,
extending the (inverse) ordering
\begin{equation}
\label{Ordeq1}
x_1 \succ  x_2 \succ\cdots
\succ   x_{g-1}\succ x_{g}
\end{equation}
 on $X$.
Then $(X^{\ast}, \prec)$ is a well-ordered monoid
that is "$\prec$" is a total ordering on $\asX$ compatible with the multiplication
and such that every non-empty subset of $\asX$  has a least element.
In all cases when
Gr\"{o}bner basis of an  ideal $I$ in $K \asX$ is considered we
shall use the degree-lexicographic ordering $\prec$ on $X^{\ast}$, extending (\ref{Ordeq1}).

Suppose $f \in K\asX$ is a nonzero polynomial, then its highest
monomial w.r.t. $\prec$ will be denoted by $\overline{f}$. One has
$f = \alpha\overline{f} + \sum_{1 \leq i\leq m} \alpha_i u_i$, where
$\alpha, \alpha_i \in K$, $\alpha \neq 0,$ $u_i\in X^{\ast}, u_i
\prec \overline{f}$. Given a set $F \subseteq K \asX$ of
noncommutative polynomials,
 $\overline{F}$ denotes the set
 \[\overline{F} = \{\overline{f}\mid f \in F, f \neq 0\}.\]

The free associative algebra $\mathfrak{A}=K \asX$ is naturally graded by length,
\[\mathfrak{A} = \bigoplus_{m\ge 0}\mathfrak{A}_m, \; \text{where}\;   \mathfrak{A}_0= K,\;
\mathfrak{A}_m = \Span \{u \in X^m, m \geq 1\}.\]
Let  $I$ be a two sided graded ideal in $K \asX$.  We shall consider
graded algebras, with a minimal presentation, see Notation-convention \ref{notconventionA},
so  without loss of generality we may assume that
$I$ \emph{is generated by homogeneous polynomials of degree $\geq 2$ },
hence $I = \bigoplus_{m\ge 2}I_m$, with $I_m = I\bigcap \mathfrak{A}_m$. Then the quotient
algebra $A = K \asX/ I$ is finitely generated and inherits its
grading $A=\bigoplus_{m\ge 0}A_m$ from $\mathfrak{A}= K \asX$. We shall work with
the so called \emph{normal} $K$-basis of $A$.

A monomial $u \in X^{\ast}$ is said to be \emph{normal mod I }
\emph{iff}  it is $\overline{I}$-normal. The set
$\mathfrak{N}(I)$ of normal modulo $I$ monomials coincides with
$\mathfrak{N}(\overline{I})$.

\begin{remark}
Let $W=W(\overline{I})$ be the
 maximal antichain of monomials in $\overline{I}$, then $u$ is normal (mod I)
\emph{iff} $u$ is $W$-normal.
Hence  $W$ is the obstruction set for
$\mathfrak{N}(I)$ and there are equalities of sets
\begin{equation}
\label{N1eq1}
\mathfrak{N}(I) = \mathfrak{N}(W) = \{u \in X^{\ast}\mid u \;
\text{is normal mod} \; I \}.
\end{equation}
\end{remark}

In particular, the free
monoid $X^{\ast}$ splits as a disjoint union
\begin{equation}
\label{X1eq2}
X^{\ast}=  \mathfrak{N}(I)\bigsqcup \overline{I} =\mathfrak{N}(W)\bigsqcup \overline{I}.
\end{equation}
The free associative algebra $K \asX $ splits into a direct sum of $K$-
  subspaces
  $K \asX \simeq  \Span_{K} \mathfrak{N}\bigoplus I$,
and there are isomorphisms of vector spaces
\begin{equation}
\label{A-Neq4}
\begin{array}{c}
A \simeq \Span_{K} \mathfrak{N}\\
\\
A_m \simeq \Span_{K} \mathfrak{N}^{(m)}, \; \dim A_m =
|\mathfrak{N}^{(m)}|, \; m= 0, 1, 2, 3, \cdots ,
\end{array}
\end{equation}
where  $\mathfrak{N}^{(m)}$ is \emph{the set of all normal words of length}
$m$.

\begin{definition}
\label{obstructions2def}
Given $A = K \asX/ I$, $\mathfrak{N} = \mathfrak{N}(I),$  and $W= W(I)$ as above we shall
also
refer to
 $W$ as \emph{the set of obstructions for $A$},  \cite{GIF}.  The monomial algebra $A_W := K \asX/ (W)$ is called \emph{the monomial algebra associated to} $A$.
\end{definition}

\begin{definition}
\label{s.f.p.def}
A subset
$G \subseteq I$
of monic polynomials \emph{is a Gr\"{o}bner
basis of $I$ (with respect to $\prec$)} if (1) $G$ generates $I$ as a
two-sided ideal, and (2) There is an equality of sets of normal words $\mathfrak{N}(\overline{G}) =
\mathfrak{N}(I)$.

A Gr\"{o}bner basis $G$ of
$I$ is called \emph{reduced} if each  $f \in G$ is reduced modulo
$G - \{f\}$, that is $f$ is a linear combination of normal modulo $G
- \{f\}$ monomials.

It is known  that each ideal $I$ of $K\asX$ has a uniquely determined  reduced
Gr\"{o}bner basis $G= G(I)$ (with respect to $\prec$). However, $G$ may be infinite.

Moreover, the set of obstructions $W= W(I)$ coincides with the set of highest monomials $\overline{G(I)}$ of the reduced
Gr\"{o}bner basis $G= G(I)$, that is
\[
W(I)= \overline{G(I)}.
\]
So, the reduced Gr\"{o}bner basis $G(I)$
is finite if and only if the  set of obstructions $W$ is finite. In
this case the algebra $A = K\langle X\rangle / (G)$ is
called \emph{a standard finitely presented algebra}, or shortly
\emph{an s.f.p. algebra}.
\end{definition}
In \cite{GIF} we initiate the study of
algebraic and homological properties \emph{of graded associative
algebras for which the set of obstructions consists of Lyndon
words}. Lyndon words and Lyndon-Shirshov bases are widely used in
the context of Lie algebras and their enveloping algebras, and also
for PI algebras (see for example the celebrated Shirshov theorem of
heights, \cite{BLSZ}). It will be interesting to explore the remarkable
combinatorial properties of Lyndon words in a more general context
of graded associative algebras.

\begin{notconvention}
\label{notconventionA}
We fix an enumeration on the set $X$ of generators, $X = \{x_1, x_2, \cdots, x_n\}$, $g \geq 2$.  By $\mathfrak{C} (X, W)$ we shall denote \emph{the class of all graded}
$K$-\emph{algebras} $A = K\langle X\rangle / I$, with a finite set of generators $X$ and a set of obstructions $W$ of arbitrary cardinality.
$\mathfrak{N} = \mathfrak{N}(W)$ will denote the set of normal words (mod $W$). $\mathfrak{N}$ is a $K$-basis for each algebra $A \in \mathfrak{C} (X, W)$.
By convention we
consider only \emph{minimal presentations of} $A$, so
\[W\bigcap X = \emptyset, \quad \text{and therefore}\; X \subset
\mathfrak{N}.\]
Without loss of generality we shall always assume that
$I$ \emph{is generated by a set $\mathfrak{R}_0$ of homogeneous polynomials of degree $\geq 2$ }.
This implies that $G(I)$,  the unique reduced Gr\"{o}bner basis of $I$, (w.r.t.
the degree-lexicographic ordering $\prec$ on $X^{\ast}$, extending (\ref{Ordeq1})), is also a set of homogeneous polynomials of degree $\geq 2$  .
Each class $\mathfrak{C} (X, W)$ contains unique monomial algebra $A_W := K\langle X\rangle / (W)$.
Clearly, all algebras $A \in
\mathfrak{C} (X, W)$ share the same associated monomial algebra is $A_W$, and the same normal $K$-basis $\mathfrak{N}$.
There are equalities of sets
\[\overline{G(I)} = W,\quad\quad \mathfrak{N}(I) = \mathfrak{N}(W) = \mathfrak{N}.\]
As a consequence, all $A \in\mathfrak{C} (X, W)$ have the same Gelfand-Kirillov dimension, $\gkdim A = \gkdim A_W$, and the same Hilbert series $H_A(z) = H_{A_W}$.
When we work with Lyndon words we often consider a third order, "$ < $" on the set $X^{+}$, namely, the pure lexicographic order
which extends the ordering $x_1 < x_2 < \cdots < x_n$ on $X$ as
follows: for any $u,v \in X^{+}\;$ $u < v$ \emph{iff } either $u$ is
a proper left factor of $v$ ($v = ub, b \in X^{+}$), or
$u = axb, v = ayc$, where  $x<y; \; x,y \in X,\; a,b,c \in X^*$.
\end{notconvention}

We study classes $\mathfrak{C} (X, W)$ such that every algebra $A \in \mathfrak{C} (X, W)$ has polynomial growth and finite global dimension.
Moreover, starting from Section 3, throughout the paper
we are interested in the special case when the
obstructions set $W$ consists of Lyndon words, see Definition \ref{Lyndonworddef}.
One of the central questions in our study is: "\emph{When a given class $\mathfrak{C} (X, W)$ contains an Artin-Schelter regular algebra?}" .
\begin{remark}
The problem of studying classes $\mathfrak{C} (X, W)$, as above was posed first in \cite{GIF} and in several talks given by the author (Bedlewo, 2013, \cite{TGIBedlewo},  ICTP
2013). On these talks we reported the results of Theorem II, and Theorem III, (2).
\end{remark}
We recall the definition of a regular algebra in the sense of Artin and Schelter, see \cite{AS}, p.171.

Let $A = K \oplus A_1 \oplus A_2 \oplus \cdots$ be a finitely presented graded algebra over a field $K$. The
algebra $A$ is called \emph{Artin-Schelter
regular} (or \emph{AS regular}) if
\begin{enumerate}
\item[(i)]
$A$ has {\em finite global dimension} $\;d$.
\item[(ii)]
$A$ has \emph{finite Gelfand-Kirillov dimension}, i.e.  $A$ has polynomial growth.
\item[(iii)]
$A$ is \emph{Gorenstein}, meaning that $Ext^q_A(K,A)=0$ if
$q\ne d$ and $Ext^d_A(K,A)\cong K$.
\end{enumerate}

AS regular algebras were introduced  and studied first in \cite{AS,
ATV1, ATV2}. Since then AS regular algebras and their geometry have
intensively been studied. When $d\le3$ all regular algebras are
classified. The problem of classification of regular algebras seems
to be difficult and remains open even for regular algebras of global
dimension $5$, see for example \cite{Li}. The study of Artin-Schelter regular algebras, their
classification, and finding new classes of such algebras is one of
the basic problems in noncommutative geometry. Numerous works on
this topic appeared during the last two decades, see for example
\cite{ATV1, ATV2}, \cite{Levasseur, Michel-Tate, LPWZ, FlVa, GI12, MichelAiM17}, and references therein.

The main results of the paper are Theorems I, II, III and IV. Theorem IV is formulated and proven in Section \ref{class_sec}. Here we state
Theorems I, II and III
proven under the following hypothesis:

\begin{hypothesis}
\label{hypothesis}
In  Notation-convention \ref{notconventionA}.
Suppose $X = \{x_1, x_2, \cdots, x_n\}$, and $W\subset X^{+}\setminus \,  X$ is an antichain of words of arbitrary
cardinality.
Denote by $\mathfrak{C} (X, W)$ the class of all graded
$K$-algebras $A = K\asX/ I $, generated by $X$ whose set of obstructions is $W$  (with respect to
 the degree-lexicographic ordering $\prec$ on $X^{\ast}$, where $x_1 \succ x_2 \succ\cdots \succ x_n$).
$A_W =K\asX/(W)$ is the monomial algebra determined by $W$, $A_W \in \mathfrak{C} (X, W)$.
Let $\mathfrak{N}= \mathfrak{N}(W)$ be the set of normal words modulo $W$, and let
$N= \mathfrak{N}\bigcap L$ be the set of normal Lyndon words.
\end{hypothesis}

\begin{theorem1}
\label{theorem1}
Assumptions as in Hypothesis \ref{hypothesis}.
Suppose there are no $d$-chains on $W$, but there exists a $d-1$-chain on $W$.
  Let $A = K \asX/ I$ be an arbitrary algebra in $\mathfrak{C} (X, W)$, assume that
  $A$ has polynomial growth.
  Then the following conditions hold.
\begin{enumerate}
\item
\label{gldim}
There are equalities
\begin{equation}
\label{gldim1}
\gldim A =d =\gkdim A.
\end{equation}
\item
\label{atoms_Anick}
There exists a finite set $M$ of normal words, called \emph{"atoms", in the sense of Anick}, \cite{Anick85}:
$M = \{a_1, a_2, \cdots, a_d\}\subset \mathfrak{N}, \; X\subseteq M$,
 such that:
 \begin{enumerate}
\item
\label{atoms_Anick_a}
The normal $K$-basis $\mathfrak{N}$ of $A$ (and $A_W$) can be expressed in
terms of  Anick's atoms as "a Poincar\'{e}-Birkhoff-Witt" basis
\begin{equation}
\label{normalbasis_Anickeq}
\mathfrak{N}= \{ a_1^{k_1} a_2^{k_2} \cdots  a_d^{k_d}\mid
\;  k_i \geq 0,\; 1\leq i\leq d \}.
\end{equation}
 \item
 \label{atoms_Anick_b}
The Hilbert series of $A$ satisfies
\begin{equation}
\label{Hilberteq}
H_A(z) = \prod _{1 \leq i \leq d} (1-z^{e_i})^{-1},\quad\text{where}\;\; e_i =|a_i| \geq 1, \; 1 \leq i \leq d.
\end{equation}
 \item
 \label{atoms_Anick_c}
  Any word $w \in W$ can be written as a product $w=a_ja_i, 1 \leq i < j \leq d$.
  \end{enumerate}
\item
\label{sfp_eq}
 $A$ is \emph{standard finitely presented}- the ideal $I$ has a finite reduced Gr\"{o}bner basis $\mathfrak{R}$, with
 \begin{equation}
\label{sfp_eq1}
|\mathfrak{R}|=|W|\leq d(d-1)/2.
\end{equation}
 \item
\label{main}
The following conditions are equivalent.
\begin{enumerate}
\item
\label{main1}
$|W|= d(d-1)/2$.
\item
\label{main2}
There is no atom $a\in M$ whose length is $2$.
\item
\label{main3}
$M=X$ and there exists a, possibly new, ordering of $X$:
\begin{equation}
\label{obstructeq2}
X=\{y_1 \leftarrow y_2 \leftarrow \cdots \leftarrow y_n\}, \; \text{such that}\; W= \{y_jy_i \mid 1 \leq i < j \leq n\}.
\end{equation}
\item
\label{main4}
$\gldim A = n$.
\item
\label{main_e}
$A$ is a PBW algebra in the sense of Priddy (see \cite{Priddy, PP}), equivalently, the reduced Gr\"{o}bner basis $\mathfrak{R}$ of $I$  consists of quadratic
homogeneous polynomials.
\item
\label{main6}
$H_A(z) = \frac{1}{(1-z)^n}$.
\end{enumerate}
\item
\label{main7}
Each of the equivalent conditions in part (\ref{main}) implies that $A$ is Koszul.
\end{enumerate}
\end{theorem1}
The proof of the Theorem is given in Subsection
\ref{proof1}

\begin{corollary}
\label{cor_theorem1}
Given a class $\mathfrak{C} (X, W)$, where $W$ is an arbitrary chain of monomials in $X^{+}$, suppose the monomial algebra
$A_W= K\asX/(W) \in \mathfrak{C} (X, W)$ has global dimension $\gldim A =d$ and polynomial growth.
Then every algebra $A \in \mathfrak{C} (X, W)$ is standard finitely presented and
 $\gldim A = d = \gkdim A$.
\end{corollary}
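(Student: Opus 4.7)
The plan is to reduce the Corollary directly to Theorem I applied term-by-term to each $A\in\mathfrak{C}(X,W)$. The hypothesis gives us information only about the monomial algebra $A_W$, so the task splits into two transfer statements: we must propagate ``polynomial growth'' from $A_W$ to every $A\in\mathfrak{C}(X,W)$, and we must translate the equality $\gldim A_W=d$ into the chain-theoretic hypothesis on $W$ required by Theorem I (no $d$-chain on $W$, but a $(d-1)$-chain exists). Assuming $W$ is an antichain as dictated by Hypothesis \ref{hypothesis} (the word ``chain'' here being a typo for ``antichain''), once both transfers are in place, Theorem I yields the full conclusion of the Corollary.

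The first transfer is essentially a bookkeeping remark. By Notation-Convention \ref{notconventionA}, every $A\in\mathfrak{C}(X,W)$ shares the same normal $K$-basis $\mathfrak{N}=\mathfrak{N}(W)$, the same Hilbert series, and the same Gelfand--Kirillov dimension as $A_W$; in particular $\gkdim A=\gkdim A_W$. Hence $A_W$ has polynomial growth if and only if every $A\in\mathfrak{C}(X,W)$ does, so the polynomial growth hypothesis of Theorem I is immediately inherited by each $A$. The second transfer invokes Anick's characterization of the global dimension of a monomial algebra in terms of the combinatorial structure of its obstruction set: for the monomial algebra $A_W$, $\gldim A_W$ coincides with the index marking the longest chain of obstructions built from $W$ (see Anick \cite{Anick86}, and as used in the proof of Theorem I). The assumption $\gldim A_W=d$ therefore forces the presence of a $(d-1)$-chain on $W$ and the absence of any $d$-chain, which is precisely the hypothesis of Theorem I.

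With both items secured, we apply Theorem I to each $A\in\mathfrak{C}(X,W)$. Part (\ref{gldim}) gives $\gldim A=d=\gkdim A$, and part (\ref{sfp_eq}) gives that $A$ is standard finitely presented, with $|\mathfrak{R}|=|W|\leq d(d-1)/2$. This is exactly the assertion of the Corollary. The only nontrivial step is the second transfer; once one accepts the Anick-type identification of $\gldim A_W$ with the maximal chain length on $W$, which underlies Theorem I itself, the Corollary follows without further work. No new computation is required beyond invoking the shared invariants across $\mathfrak{C}(X,W)$ and quoting Theorem I.
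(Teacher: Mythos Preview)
Your proposal is correct and follows exactly the route the paper intends: the corollary is stated without explicit proof in the paper, being an immediate consequence of Theorem I once one (i) transfers polynomial growth across $\mathfrak{C}(X,W)$ via the shared normal basis (Notation-Convention \ref{notconventionA}) and (ii) converts $\gldim A_W=d$ into the chain hypothesis on $W$ via Corollary \ref{nchaincor} (Anick \cite{Anick85}, not \cite{Anick86}). Your observation that ``chain'' should read ``antichain'' is also correct.
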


\begin{theorem2}
\label{theorem2}
Assumptions as in Hypothesis \ref{hypothesis}. Let $W\subset X^{+}$ be an antichain of arbitrary monomials.
\begin{enumerate}
\item
 \label{MonLynTheorem11}
$W$ is an anticain of Lyndon words if and only if the set of normal words
 $ \mathfrak{N}$ has the shape
\begin{equation}\label{normalbasiseq_inf}
\mathfrak{N}= \{ l_1^{k_1} l_2^{k_2} \cdots  l_s^{k_s} \mid\;l_i \in N, \;   l_1 > l_2 > \cdots >l_s,\; s \geq
1,\; k_i \geq 0, \; 1
\leq i\leq s\},
\end{equation}
where the set of normal Lyndon words $N$ is not necessarily finite.

Suppose that $W$ is an antichain of Lyndon words, so $(N,W)$ is a Lyndon pair, see Definition \ref{Lyndon_pair_def}.
 Let $A = K \asX/ I \in \mathfrak{C} (X, W)$.
\item
\label{theorem2_2}
$A$ has polynomial growth of degree $d$ if and only if $N$ has order $d$, so $N = \{l_1 > l_2> \cdots > l_d\}$.
In this case the following conditions hold.
\begin{enumerate}
\item
\label{theorem2_2a}
$\gldim A= d$;
\item
\label{theorem2_2b}
$A$ has a Poincar\'{e}-Birkhoff-Witt type $K$-basis
\begin{equation}
\label{normalbasiseq}
\mathfrak{N}= \{l_1^{k_{1}}l_2^{k_{2}}\cdots
l_d^{k_{d}}\mid k_{i} \geq 0,  1 \leq i \leq
d\}, \;\text{and}\; H_A(t) = \prod_{1 \leq i \leq d} \;  1/(1-t^{|l_i|}).
\end{equation}
\item
\label{theorem2_c}
The algebra $A$ is
standard finitely presented, more precisely,
the reduced Gr\"{o}bner basis of $I$ consists of exactly $|W|$ polynomials, where
\begin{equation}
\label{bounds_W_eq}
d-1 \leq |W| \leq \frac{d(d-1)}{2}.
\end{equation}
\end{enumerate}
\item
\label{theorem2_3}
The class $\mathfrak{C} (X, W)$ contains Artin-Schelter regular algebras, whenever \[|W| = \frac{d(d-1)}{2}\;\;\text{or}\;\; |W|= d-1, \; \text{and $N$ is connected, see Def \ref{connect_def}}.\]
\end{enumerate}
\end{theorem2}
The theorem is proven in Section \ref{More results}.
\begin{theorem3}
\label{theorem3}
 Suppose the class $\mathfrak{C}(X, W)$ is defined by an antichain of Lyndon words $W$, where
the set of Lyndon atoms $N = N(W)$ has a finite order $|N|=d$.
\begin{enumerate}
\item
The following conditions are
equivalent:
\begin{enumerate}
\item
$|W| = d-1$ and $N$ is \emph{a connected set of Lyndon atoms}, see Def \ref{connect_def};
\item
$X = \{x < y\}$, and (up to isomorphism of monomial algebras $A_W$) the set $N$ is determined uniquely:
 \[N = \{x<xy<xy^{2}<\cdots < xy^{d-2}<y\};\]
\item
$X = \{x < y\}$, and (up to isomorphism of monomial algebras  $A_W$) the set $W$ is determined uniquely:
\[W = \{xy^{i}xy^{i+1}\mid 0 \leq i \leq d-3 \}\bigcup\{xy^{d-1}\}. \]
\end{enumerate}
In this case the class $\mathfrak{C} (X, W)$ contains
the universal enveloping $U=U \Lcal_{d-1}$,  where $\Lcal_{d-1}$ is the standard filiform
algebra of dimension $d$ and nilpotency class $d-1$. $U$ is an AS-regular algebra with $\gldim U =d$.
\item
\label{MonLynTheorem4}
The following conditions are
equivalent:
\begin{enumerate}
\item $|W| = d(d-1)/2$;
\item $N= X$, and $d = n$;
\item $W = \{x_i x_j \mid 1 \leq i < j \leq d\}$.
\end{enumerate}
In this case the class $\mathfrak{C} (X, W)$ contains an abundance of (non isomorphic)  Artin-Schelter regular PBW algebras of global dimension $d$,
each of them is presented as a skew polynomial ring with square-free binomial relations, (in the sense of \cite{GI96})
and defines a solution of the Yang-Baxter equation.
\end{enumerate}
\end{theorem3}
The theorem is proven in subsection \ref{proof3}.

The paper is organized as follows.
In Section \ref{gldimsection} we study graded algebras with polynomial growth and finite global dimension and prove Theorem I.
 In Section \ref{Lyndon_pairs_sec} we recall the notion of Lyndon words and prove some new results. We introduce the notion of Lyndon pairs,
which are central for our theory. In Section \ref{More results} we prove Theorem II, and Theorem
\ref{Minim_W_Theor1} which is a purely combinatorial result on Lyndon pairs.
In Section
\ref{SectionLyndon words and  Lie algebras} we give some basic information on classes $\mathfrak{C}(X, W)$ containing the enveloping algebra $U$ of a Lie algebra (possibly infinite dimensional). We show that
for every such a class the set of obstruction
$W$ is an antichain of Lyndon words, and the corresponding set of Lyndon atoms $N=N(W)$ \emph{must be} \emph{connected}, see Definition \ref{connect_def}.
and Proposition \ref{VIPproposition}. In Section \ref{Sec.MonLiealgebras} we introduce monomial Lie algebras $\mathfrak{g}$ defined by Lyndon words -these algebras and their enveloping algebras $U\mathfrak{g}$ are central for the paper. A natural problem arises: "\emph{Classify Lyndon pairs $(N, W)$ such that the class $\mathfrak{C}(X, W)$ contains an Artin-Schelter regular algebra occurring as an enveloping
$U\mathfrak{g}$ of a concrete monomial Lie algebra $\mathfrak{g}= \Lie(X)/([W])_{Lie}$}". This motivates our
study of Gr\"{o}bner-Shirshov bases for monomial Lie ideals. In this case we find natural combinatorial conditions in terms of the Lyndon pair $(N,W)$. Propositions \ref{theor_easy_vip} and \ref{SimplifyProp}, give purely combinatorial conditions which in various cases help to decide easily and without computations, whether a set of bracketed monomials $[W]$ is a Gr\"{o}bner-Shirshov basis of the Lie ideal $J=([W])_{Lie}$ of the free Lie algebra $\Lie(X)$. In Section \ref{nilpotentsec} we present some classical Lie algebras as standard monomial Lie algebras, $\Lie(X)/([W])_{Lie}$, where $[W]$ is a finite Gr\"{o}bner-Shirshov basis of the ideal $J=([W])_{Lie}$. Our first applications of the general results on GS bases  are Corollary \ref{freeNilp_cor} and Theorem \ref{filiformProp}  which show that
the free nilpotent Lie algebra $\mathfrak{L}(m)$ of nilpotency class $m$ and the Filiform Lie algebras $\Lcal_m$ and $\Qcal_m$ are standard $\mathbb{Z}^n$ multigraded  monomial Lie algebras. In Section \ref{more.applications} we introduce the notions of \emph{a regular Lyndon pair}, \emph{a standard Lyndon pair}, and \emph{a nontrivial disconnected extension of a standard Lyndon pair}, which are essential for our classification.
Important general results are Proposition
\ref{connected_Prop}, and especially Proposition \ref{FiliformWProp2}. In Section \ref{class_sec} we prove one of the main results of the paper, Theorem IV, which gives
a classification of the two-generated Artin-Schelter regular algebras
of global dimensions $6$ and $7$, occurring as enveloping of standard monomial Lie algebras. Our general results on Lyndon pairs from the previous sections are crucial for the proof.

\section{Graded algebras with polynomial growth and finite global dimension}
\label{gldimsection}

\subsection{Anick's results on global dimension and growth of augmented graded algebras}

Given a finitely generated augmented graded algebra $A= K\asX/ I$
with a set of obstructions $W$, Anick introduces the notion of
\emph{an $n$-chain on $W$} and uses the $n$-chains to construct a resolution
of the field $K$ considered as an $A$-module. He obtains
important results on augmented graded algebras with finite
global dimension, see \cite{Anick86}, and \cite{Anick85}. It is
known that \emph{Anick's resolution is minimal in the cases when (a) $A$
is a monomial algebra, that is  $A= K\asX/ (W)$ and (b) $A$ is a PBW
algebra}, that is the ideal  $I$ is generated by quadratic
homogeneous polynomials which form a Gr\"{o}bner basis w.r.t.
degree-lexicographic ordering $\prec$ on $X^{\ast}$.
We recall now some definitions and results from \cite{Anick85} and
\cite{Anick86}.

By Notation-Convention  \ref{notconventionA} throughout the paper we assume
\[
W\bigcap X = \emptyset.
\]
\begin{definition}
\label{nchaindef}  \cite{Anick85} The set of  $n$-chains on $W$ is
defined recursively. A
 $(-1)$-chain is the monomial $1$, a
$0$-chain is any element of $X$, and a $1$-chain is a word in $W$.
An (n+1)-prechain is a word $w \in X^{+}$, which can be factored in
two different ways $w = uvq=ust$ such that $t \in W$, $u$ is an
$n-1$ chain, $uv$ is an $n$-chain, and $s$ is a proper left segment
of $v$. An $(n+1)$-prechain is an $(n+1)$-chain if no proper left
segment of it is an $n+1$-prechain. In this case the monomial $q$ is
called \emph{the tail of the $n$-chain $w$}.
\end{definition}

In fact the global dimension of a finitely presented monomial algebra can be effectively computed using some combinatorial properties of its defining relations, as shows the following corollary extracted from \cite{Anick85}, Theorem 4.
\begin{corollary}
\label{nchaincor} \cite{Anick85}
Suppose $W \subset X^{+}$ is an antichain of monomials. The monomial algebra $A_W = K\asX/(W)$ has global dimension $d$ \emph{iff} there are no $d$-chains on $W$ but there exists a
$d-1$ chain on $W$.
\end{corollary}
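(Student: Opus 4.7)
The plan is to invoke Anick's free resolution of the trivial module $K$ over $A=A_W$ and then extract the global dimension from the vanishing behavior of $\mathrm{Tor}$. Since $A$ is connected graded and augmented, we have the standard identification
\[
\gldim A \;=\; \mathrm{pd}_A(K) \;=\; \sup\{\, n \mid \mathrm{Tor}^A_n(K,K)\neq 0 \,\},
\]
so it suffices to determine for which $n$ the Tor groups are nonzero.

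First, I would recall Anick's resolution, which is a free resolution
\[
\cdots \longrightarrow F_{n+1} \longrightarrow F_n \longrightarrow \cdots \longrightarrow F_1 \longrightarrow F_0 \longrightarrow K \longrightarrow 0,
\]
where $F_0=A$ (corresponding to the unique $(-1)$-chain $1$), $F_1$ has rank $|X|$ (corresponding to $0$-chains), $F_2$ has basis indexed by the $1$-chains (the elements of $W$), and more generally $F_{n+1}$ has basis indexed by the set of $n$-chains on $W$ from Definition \ref{nchaindef}. Then I would invoke the crucial fact, proved by Anick, that for a monomial algebra $A=A_W = K\asX/(W)$ this resolution is \emph{minimal}: the differentials land in $A_{\geq 1}\cdot F_{\ast}$, so tensoring with $K$ over $A$ kills all of them. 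Consequently
\[
\mathrm{Tor}^A_{n+1}(K,K) \;\simeq\; \Span_{K}\{\text{$n$-chains on } W\}.
\]

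From this identification the equivalence is immediate. If a $(d-1)$-chain exists, then $\mathrm{Tor}^A_d(K,K)\neq 0$, so $\gldim A\geq d$. If no $d$-chain exists, then $F_{d+1}=0$, the resolution has length $d$, and $\mathrm{Tor}^A_{n}(K,K)=0$ for all $n>d$, so $\gldim A\leq d$. The two together yield $\gldim A_W = d$, and conversely, if $\gldim A_W = d$, the same identification forces the existence of a $(d-1)$-chain and the non-existence of any $d$-chain.

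The only nontrivial input is Anick's construction and minimality of the resolution in the monomial case, both of which are quoted from \cite{Anick85, Anick86}; there is no real obstacle at the level of this corollary, since once the resolution is in hand, the argument is a direct translation between ``$n$-chains exist'' and ``$\mathrm{Tor}^A_{n+1}(K,K)\neq 0$.'' I would therefore present the proof as essentially a bookkeeping statement on the homological degree of Anick's resolution, emphasizing that the hypothesis that $A_W$ is monomial is exactly what makes the resolution minimal and hence lets us read off $\gldim A_W$ from the combinatorics of chains on $W$.
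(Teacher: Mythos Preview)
Your argument is correct and is exactly the standard reasoning behind this result: Anick's resolution has free modules indexed by $n$-chains, it is minimal when $A$ is monomial, and hence $\mathrm{Tor}^A_{n+1}(K,K)$ has basis the $n$-chains. The paper does not give its own proof of this corollary at all; it simply extracts the statement from \cite{Anick85}, Theorem 4, so your write-up is in fact more detailed than what appears in the paper.
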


\subsection{Connected graded algebras with polynomial growth and finite global dimension (the general case)}
\label{proof1}
In this section we shall study finitely generated graded algebras with polynomial growth and finite global dimension.
We prove Theorem I
assuming Hypothesis \ref{hypothesis}.
Note that the set of obstructions $W$ given in the hypothesis is an arbitrary antichain of monomials in $X^{+}$, no restrictions like "$W$ consists of Lyndon words", or
"$W$ is finite" have been imposed.

\begin{proof1}
The algebra $A$ and its associated monomial algebra $A_W$ share the same set of obstructions $W$, hence the same $K$-normal basis, $\mathfrak{N}= \mathfrak{N}(W)= \mathfrak{N}(I)$, so $\gkdim A = \gkdim
A_W$,
and therefore $A_W$ has polynomial growth.
 Moreover, by hypothesis  there are no $d$-chains on $W$, but there exists a $d-1$-chain on $W$, hence, by Corollary
\ref{nchaincor}
 $\gldim A_W <\infty$.  Now a result of the author,  \cite{GI89}, Theorem II, implies that
 there are equalities
 \begin{equation}
\label{glimeq1}
 \gldim A = \gldim A_W = d = \gkdim A= \gkdim A_W,
  \end{equation}
 which proves part (\ref{gldim}).
 It is clear, that $A_W$ does not contain a free-subalgebra on two monomials (since $A_W$ has polynomial growth)
and therefore the monomial algebra $A_W$ satisfies the hypothesis of \cite{Anick85},  Theorem 5. It follows then
that there exists a finite, ordered set $M = \{a_1\leftarrow a_2 \leftarrow\cdots \leftarrow a_d\}\subset \mathfrak{N}$
 consisting of \emph{atoms} in the sense of Anick, where $\leftarrow$ is a linear ordering on $M$, and $X \subseteq M.$
$M$ satisfies the good properties listed in \cite{Anick85}, p. 297,  which straightforwardly imply (\ref{atoms_Anick}) (a), (b), (c).
By (c) each obstruction $w \in W$ can be presented in terms of atoms as $w = a_ja_i,$ where  $1 \leq i< j \leq d$
 and since there are
$\binom{d}{2}$ such pairs $i,j$, one has $|W|\leq d(d-1)/2$.

  We know that the reduced Gr\"{o}bner basis $\mathfrak{R}$ of $I$ satisfies $\overline{\mathfrak{R}} = W$, where
  $\overline{\mathfrak{R}}$ is the
  set of highest monomials of the elements of $\mathfrak{R}$.
  Moreover, any two elements $f,g \in \mathfrak{R}, f \neq g$ satisfy $\overline{f} \neq \overline{g}$. Hence
  $|\mathfrak{R}|= |W| \leq \binom{d}{2}$, and therefore $A$ is standard finitely presented. This proves part (\ref{sfp_eq}).

(\ref{main}).
We recall some details from Anick's results, \cite{Anick85}.
 A partial relation $\longrightarrow$ on the set of normal words, is introduced on p. 297, \cite{Anick85}, and the set of atoms $M$ is defined recursively as follows
$M = \bigcup_{s=1}^{\infty} M_s$, where $M_1 = X$, and given $M_{s-1}$, $M_s$ is defined by
\[
\begin{array}{ll}
M_s &= M_{s-1}\bigcup\widetilde{M_s},\;\;\text{where}\\
 \widetilde{M_s}&= \{ ab \mid a,b \in  M_{s-1}, b\leftarrow a, ab \in \mathfrak{N}, \; \text{and}\; |ab|=s \}.
 \end{array}
 \]
It is proven that the set of atoms satisfy several properties, among which the following two.

 $E_s$ (Exclusivity): for $a,b \in M_s$ \emph{exactly one of the relations} $a \leftarrow b, a=b$ and $a \rightarrow b$ \emph{is valid}.

 $R_s$ (The relations in $W$): Any word $w \in W$, with $|w|\leq s$ is a "would-be-atom", i.e.$\;w = ab,$ for some $a,b \in M_{s-1}$, with $b \leftarrow a.$

 By part (2) the set $X \subset M,$ thus $n \leq |M|= d$, and we use the restriction of the (new) linear ordering $\rightarrow$ on $X$ to "rename" the elements of $X$.
so that \[X = \{y_1\leftarrow y_2 \leftarrow\cdots \leftarrow y_n\}.\]

 (\ref{main1}) $\Longrightarrow$ (\ref{main2}). Assume $|W|= d(d-1)/2$. It follows from $R_s$, and $|M|= d$, that for each pair $a_j, a_i \in M$ with $j > i$ (equivalently,
 $a_i
 \leftarrow a_j$) one has $a_ja_i \in W$ (moreover, $a_ja_i \neq a_pa_q$, whenever $(j,i) \neq (p,q)$). In particular, the set
 \begin{equation}
\label{atomseq1}
\{ y_jy_i \mid 1 \leq i < j \leq n \}\subseteq W.
\end{equation}
Note that the inclusion (\ref{atomseq1}) is in force if and only if there is an equality
\begin{equation}
\label{atomseq2}
\widetilde{M_2}= \{ ab \mid a,b \in  X, b\leftarrow a, ab \in \mathfrak{N}\}= \emptyset
\end{equation}
Moreover, (\ref{atomseq2}) is equivalent to (\ref{main2}). Indeed,
by the recursive definition, $M_1=X$, and $M_2 = M_1\bigcup \widetilde{M_2}$, so there are no atoms of length $2$ \emph{iff}
$\widetilde{M_2}=  \emptyset$.

(\ref{main2}) $\Longrightarrow$ (\ref{main3}).
Assume (\ref{main2}). Then  $\widetilde{M_2}= \emptyset$, hence $M_2 = M_1 = X$, and  (\ref{atomseq1}) is in force.
Using induction on $s$ one verifies that $\widetilde{M_s}= \emptyset$, for all $s \geq 2$, and therefore $M= M_1 =X = \{y_1 \leftarrow y_2 \leftarrow\cdots \leftarrow y_n\}$.
Moreover $W = \{y_jy_i\mid 1 \leq i < j \leq n\}$ which proves (\ref{main3}).

(\ref{main3}) $\Longrightarrow$ (\ref{main4}).
Assume (\ref{main3}). Then $A_W$ is a quadratic monomial algebra with $\gkdim A_W = n$, and $\gldim A_W = d$, hence, $d = n$, and $\gldim A = \gldim A_W = n$.

(\ref{main4}) $\Longrightarrow$ (\ref{main_e}).
Assume $\gldim A =n$. Part (1) implies $\gldim A =\gldim A_W =\gkdim A_W = d$, by part (2) $|M| =d$, hence $n=|M|$.
   Now $X \subseteq M$, and $|X| =n$, imply  $M = X$. It follows that the set of obstructions $W$ satisfies (\ref{obstructeq2}). In particular, the reduced Gr\"{o}bner basis $\mathfrak{R}$ of $I$ consists of
homogeneous polynomials of degree $2$, so $A$ is a PBW algebra in the sense of Priddy, \cite{Priddy}.

(\ref{main_e}) $\Longrightarrow$ (\ref{main3}).
Assume $A$ is a PBW algebra in the sense of Priddy. Then $W$ consists of monomials of length $2$, and  $A_W$ is a quadratic monomial algebra.
By the hypothesis of the theorem $A_W$ has polynomial growth and finite global dimension. Hence $W$ does not contain squares.
By \cite{GI12}, Theorem 3.7, p 2163 there is a permutation $y_1, \cdots , y_n$ of $X$, such that $W$ has the shape (\ref{obstructeq2}).

The implications
(\ref{main3}) $\Longrightarrow$ (\ref{main6}) and (\ref{main3}) $\Longrightarrow$ (\ref{main1}) are clear.

(\ref{main6}) $\Longrightarrow$ (\ref{main4}).
Suppose $H_A(z) = \frac{1}{(1-z)^n}$.
Then $A$ has polynomial growth of degree $n$, hence, by part (\ref{gldim}), $\gldim A =\gkdim A = n$.
This proves part (\ref{main}).

It is known that every (quadratic) PBW algebra $A$ in the sense of Priddy,
is Koszul, see \cite{Priddy}, and also \cite{PP}, Theorem 3.1, p 84.
This proves part (5).
The theorem has been proved. $\quad \quad\quad\quad\quad\quad\quad\quad\quad \quad\quad\quad\quad\quad \quad\quad\quad\quad\quad\quad\quad\quad\quad\quad \Box$
\end{proof1}

\begin{corollary}
\label{Cor-AS}
 Let $A= K\asX/I$ be an Artin-Shelter regular algebra of global dimension $d$, and with a set of obstructions $W$.
 Suppose that there are no $d$-chain on $W$.
 Then conditions (1) through (5) of Theorem I, \ref{theorem1} are satisfied. In particular,
(i) there are equalities
 $\gkdim A = \gldim A = d$ and $\gldim A =\gldim A_W$;
(ii) $A$ is standard finitely presented and $|\mathfrak{R}|=|W|\leq d(d-1)/2$;
(iii) Moreover, if  $|W|= d(d-1)/2$, then $d= n = |X|$, and $A$ is Koszul.
\end{corollary}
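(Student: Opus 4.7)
The plan is to reduce the corollary directly to Theorem I by verifying its hypotheses for the given Artin--Schelter regular algebra $A$. Indeed, by the definition of AS regularity recalled above, $A$ has finite global dimension $d$ and finite Gelfand--Kirillov dimension, so $A$ has polynomial growth. It therefore remains only to show that, together with the standing assumption that $W$ carries no $d$-chain, there exists a $(d-1)$-chain on $W$; once this is established, every conclusion of Theorem~I applies to $A$ and the statements (i)--(iii) are obtained by reading off the appropriate items.

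For the existence of a $(d-1)$-chain, I would argue as follows. Since $A$ and its associated monomial algebra $A_W$ share the set of obstructions $W$ and thus the same normal basis $\mathfrak{N}(W)$, $A_W$ also has polynomial growth. The author's result \cite{GI89}, Theorem~II (already invoked in the proof of Theorem~I under the polynomial growth hypothesis) then gives $\gldim A_W = \gldim A = d$. Combined with Corollary~\ref{nchaincor}, which characterizes $\gldim A_W = d$ by the simultaneous absence of a $d$-chain and presence of a $(d-1)$-chain on $W$, the assumption $\gldim A_W = d$ together with the hypothesis that there are no $d$-chains forces the existence of a $(d-1)$-chain on $W$, as required.

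With the hypotheses of Theorem~I verified, items (1)--(5) apply verbatim to $A$. Statement (i) of the corollary is exactly part~(\ref{gldim}) of Theorem~I, namely $\gkdim A = \gldim A = d$, supplemented by the equality $\gldim A = \gldim A_W$ coming from \cite{GI89} used in the previous paragraph. Statement (ii) is part~(\ref{sfp_eq}) of Theorem~I, which asserts that the reduced Gr\"obner basis $\mathfrak{R}$ of $I$ is finite with $|\mathfrak{R}| = |W| \leq d(d-1)/2$. For (iii), assume $|W| = d(d-1)/2$; then the chain of equivalences in part~(\ref{main}) of Theorem~I applies, in particular (\ref{main1})$\Leftrightarrow$(\ref{main3})$\Leftrightarrow$(\ref{main4}) yields $n = |X| = d$ and $A$ is a PBW algebra in the sense of Priddy, and finally part~(\ref{main7}) of Theorem~I guarantees that $A$ is Koszul.

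I do not anticipate any genuine obstacle, since the corollary is a packaging statement: the only nontrivial point is ensuring the $(d-1)$-chain exists, and this is a short two-line deduction from \cite{GI89} and Corollary~\ref{nchaincor}. The remainder is a translation of Theorem~I's conclusions into the AS-regular context.
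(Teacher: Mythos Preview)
Your proposal is correct and matches the paper's intent: the corollary is stated immediately after Theorem~I without a separate proof, so the implicit argument is exactly ``verify the hypotheses of Theorem~I and read off the conclusions,'' which is what you do. One small point to tighten: when you invoke \cite{GI89}, Theorem~II, to obtain $\gldim A_W = \gldim A = d$, note that in the proof of Theorem~I this result is applied only after $\gldim A_W < \infty$ is known; here that finiteness is already guaranteed by the hypothesis (no $d$-chains on $W$ forces no $k$-chains for $k \geq d$, hence $\gldim A_W \leq d$ by Corollary~\ref{nchaincor}), but it is worth making this explicit before citing \cite{GI89}.
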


Recall the following conjecture of Anick, \cite{Anick85}, p.301.
\begin{conjecture}
\label{conj_Anick}
Suppose $A$ is an augmented connected graded $K$-algebra with polynomial growth and with global dimension $d < \infty.$ Then the Hilbert series of $A$ is given by
(\ref{Hilberteq}) for some positive integers $\{e_i\}$.
\end{conjecture}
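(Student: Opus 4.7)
The plan is to leverage Theorem I by reducing to the monomial case. Given a connected graded augmented $K$-algebra $A = K\asX / I$ with polynomial growth and $\gldim A = d < \infty$, I would form the associated monomial algebra $A_W = K\asX / (W)$, where $W = W(\overline{I})$ is the obstruction set of $A$ with respect to the degree-lexicographic order $\prec$. By the identification $\mathfrak{N}(I) = \mathfrak{N}(W)$ and the isomorphism (\ref{A-Neq4}), the algebras $A$ and $A_W$ share the same normal $K$-basis, the same Hilbert series $H_A(z) = H_{A_W}(z)$, and the same Gelfand--Kirillov dimension; in particular $A_W$ also has polynomial growth of degree $d$.

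The core reduction is to establish that $A_W$ has finite global dimension. Once this is in hand, Corollary \ref{nchaincor} produces an integer $d' \geq 1$ such that no $d'$-chain exists on $W$ while some $(d'-1)$-chain does. The hypothesis of Theorem I is then satisfied by $A_W$, so part (\ref{atoms_Anick}) of that theorem produces a finite set of Anick atoms $\{a_1, \ldots, a_{d'}\}$ for which $H_{A_W}(z) = \prod_{i=1}^{d'} (1 - z^{|a_i|})^{-1}$. Since $A$ and $A_W$ share Hilbert series, this yields the desired formula for $A$ with $e_i = |a_i|$. Matching $d' = d$ follows from the chain of equalities $d = \gldim A = \gldim A_W = \gkdim A_W = \gkdim A = d$ provided by \cite{GI89} under the present hypotheses.

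The main obstacle is establishing $\gldim A_W < \infty$ from the hypotheses on $A$ alone. In general only the inequality $\gldim A \leq \gldim A_W$ is automatic, so the passage from $A$ to its monomial specialization can enlarge, not shrink, the global dimension. Two routes suggest themselves. The first would exploit Anick's chain resolution of the trivial module $K$ over $A$: finiteness of $\gldim A$ should truncate the resolution at stage $d$, and an acyclicity argument combined with the polynomial-growth bound on the number of normal words in each degree should rule out long chains on $W$. A second route bypasses $A_W$ entirely and constructs a finite set of atoms $\{a_1, \ldots, a_d\}$ directly inside $A$ by a recursive procedure mirroring Anick's construction in \cite{Anick85}, using polynomial growth to bound each atom's length and using $\gldim A = d$ to force the recursion to close after $d$ steps.

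I expect the second route to be more tractable because the cancellation behaviour in non-monomial algebras makes the first route difficult to control: Anick's resolution is not minimal in general, and squeezing an effective chain bound out of its acyclicity is delicate. The hard part in the second route will be to show that the recursion terminates at precisely $d$ atoms and that the resulting sequence $a_1, \ldots, a_d$ generates the Hilbert series multiplicatively --- essentially, to reprove the PBW-type statement of Theorem I (\ref{atoms_Anick_a}), (\ref{atoms_Anick_b}) without the prior knowledge that chains on $W$ are bounded. The interplay between polynomial growth of degree $d$ and finiteness of $\gldim A = d$ is the essential input forcing these two numerical invariants to coincide with the cardinality of a genuine atom set.
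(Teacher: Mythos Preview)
The statement you are attempting to prove is labelled a \emph{Conjecture} in the paper, and the paper does not prove it. What the paper does is verify the conjecture under the \emph{additional} hypothesis that the associated monomial algebra $A_W$ has finite global dimension (equivalently, that there are no $d$-chains on $W$ for some $d$); see the paragraph immediately following Conjecture~\ref{conj_Anick} and Corollary~\ref{Cor-AS}. Under that extra assumption, Theorem~I applies directly and gives the Hilbert series formula --- this is exactly the reduction you carry out in your first two paragraphs.

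You have correctly located the genuine obstruction: deducing $\gldim A_W < \infty$ from $\gldim A < \infty$ and polynomial growth. But this is precisely the open content of Anick's conjecture, not a technical lemma to be filled in. Your first route (bounding chain length via acyclicity of Anick's resolution for $A$) does not work as stated, because when the resolution is non-minimal the vanishing of $\operatorname{Tor}^A_n(K,K)$ for $n>d$ says nothing about the existence of $n$-chains on $W$; long chains can persist while their contributions cancel. Your second route amounts to reproving Anick's atom construction without the monomial hypothesis, which is again the full strength of the conjecture. So the proposal is not a proof sketch with a gap to close; it is a restatement of the conjecture together with an identification of why it is hard. The paper makes no claim beyond the special case, and you should not expect either route to succeed without a genuinely new idea.
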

Anick states in \cite{Anick85} that  the conjecture has been verified for the cases when $A$ is commutative, $A = U$ is an enveloping algebra of a Lie algebra, when $A$ is a
monomial algebra, and also in the case when $A$ is is a Noetherian PI ring (an unpublished result of Lorenz).
Our results verify Anick's conjecture in the case when $A$ is an algebra whose associated monomial algebra $A_W$ has finite global dimension.

\section{Lyndon words, Lyndon atoms, and Lyndon pairs $(N,W)$}
\label{Lyndon_pairs_sec}
\subsection{Lyndon words and Lyndon atoms}
\label{MonLynSubsecLyn}
We recall first the notion of Lyndon words and some of their basic
properties (we refer to \cite{Lo}).
Consider
the \emph{pure} \emph{lexicographic ordering} "$ < $" on the set $X^{+}$
which extends the ordering $x_1 < x_2 < \cdots < x_n$ on $X$ as
follows: for any $u,v \in X^{+}\;$ $u < v$ \emph{iff } either $u$ is
a proper left factor of $v$ ($v = ub, b \in X^{+}$), or
$u = axb, v = ayc$, where  $x<y; \; x,y \in X,\; a,b,c \in X^*$.

Recall that
\[
\begin{array}{ll}
L 1: &\forall u \in X^{\ast}, \; a < b \Longleftrightarrow ua < ub.\\
L 2: & \text{if}\; b \notin aX^{+}, \;  \forall u, v
\in X^*,\; a < b  \Longrightarrow
 au < bv.
\end{array}
\]
So  $<$ is a linear ordering on the set $X^{+}$ compatible with the
left multiplication in $X^{+}$.
However, in the case when $b = au, u
\neq 1$, the right multiplication does not necessarily preserve
inequalities.
(For example,  $a< ax_1, \; \text{but} \;  ax_2 >
ax_1x_2$).
Furthermore, the decreasing chain condition is not
satisfied on $(X^{+}, <)$, for if $x, y \in X, x<y$, one has  $xy>
x^2y> x^3y> \cdots$.

The following simple relation between the two (fixed) orderings $\prec$ and $<$ on the free semigroup $X^{+}$
is in force.
\begin{equation}
 \label{orderdef}
u \prec v  \; \text{\emph{iff}}\;   |u| < |v|,\;
                                        \text{or}\;
                                        |u| = |v|\; \text{and}\; u >
                                        v.
\end{equation}
For example, if $X= \{x < y\}$, one has $xxy < xyy$, but $xxy \succ xyy$.

\begin{defnotation}
\label{Lyndonworddef} \cite{Lo}
A nonperiodic word $u \in
X^{+}$ is \emph{a Lyndon word} if it is minimal, with respect to
$<$, in its conjugacy class. In other words, $u = ab, a,b\in X^{+}$
implies $u < ba$. The set of Lyndon words in $X^{+}$ will be denoted
by $L$, $L_s$ denotes the set of all Lyndon words of length $s, s = 1, 2, 3, \cdots$.
\end{defnotation}
We shall use notation, terminology and some results from \cite{GIF}.
\begin{defnotation}
\label{Lyndonatomsdef} \cite{GIF}
Given an antichain $W$ of Lyndon words, the set of all $W$-\emph{normal
Lyndon words} is denoted by $N= N(W)$, we shall refer to it as \emph{the set of Lyndon atoms corresponding to} $W$. By definition the set of Lyndon atoms $N$
satisfies
\begin{equation}
 \label{liealgeq01}
N =N(W) = L  \bigcap \mathfrak{N}(W) \; \text{where $L$ is the set of Lyndon words} .
\end{equation}
Assume that $A = k \asX/ I$ is a graded $K$-algebra such that its set of
obstructions  $W$ consists of Lyndon monomials, then we shall also refer to the normal Lyndon words
 $N = N(W)$ as \emph{the set of Lyndon atoms for} $A$.
\end{defnotation}

\begin{definition}
\label{Lyndon_pair_def}
Let $W$ be an antichain of Lyndon words, and  $N =N(W)\subset L$ be the corresponding set of Lyndon atoms. Then the pair of sets $(N,W)$
will be called \emph{a Lyndon pair}.
\end{definition}
The canonical duality between the notions "an antichain $W$ of Lyndon words" and "its set of Lyndon atoms, $N=N(W)$"  is discussed in the next subsection, see also \cite{GIF}.
\begin{fact}
\label{LyndonThm}
\emph{(Lyndon's Theorem)}, \cite{Lo02} 11.5.
Every word $w \in X^{+}$ can be written uniquely as a non increasing product $w = l_1
l_2 \cdots l_s$ of Lyndon words, $l_1\geq l_2\geq \cdots \geq l_s$.
\end{fact}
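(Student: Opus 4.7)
The plan is to establish the existence and uniqueness halves of the factorization separately, leveraging the following key lemma which I would verify up front.

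\emph{Key Lemma.} If $u,v \in L$ and $u < v$ in the pure lexicographic order, then the concatenation $uv$ belongs to $L$.

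I would prove the Key Lemma by using the equivalent characterization that $w\in L$ iff $w < s$ for every proper suffix $s$ of $w$, and by a case split on whether a proper suffix of $uv$ cuts inside $u$ or inside $v$. The compatibility properties $L1$ and $L2$ of the pure lexicographic order recalled in the preamble, together with the Lyndon inequalities $u < u''$ and $v < v''$ for proper suffixes $u''$ of $u$ and $v''$ of $v$, close each case.

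\textbf{Existence.} Since every letter $x_i\in X$ is vacuously a Lyndon word, any $w\in X^{+}$ has the trivial letter-by-letter factorization into Lyndon factors. Starting from it, I would apply the Key Lemma as a rewriting rule: whenever two adjacent factors $l_j, l_{j+1}$ satisfy $l_j<l_{j+1}$, merge them into the single Lyndon word $l_j l_{j+1}$. Each merge strictly decreases the number of factors, so after finitely many steps the rewriting halts at a factorization with $l_1\geq l_2\geq \cdots \geq l_s$.

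\textbf{Uniqueness.} Suppose $w=l_1\cdots l_s = m_1\cdots m_t$ are two non-increasing Lyndon factorizations. I would show $l_1=m_1$ and then conclude by induction on $|w|$. Without loss of generality assume $|l_1|\geq |m_1|$. If $|l_1|>|m_1|$ then $m_1$ is a proper prefix of $l_1$, so $l_1 = m_1 m_2 \cdots m_{j-1} m_j'$, where $m_j'$ is a proper (possibly empty) prefix of $m_j$. The Lyndon property of $l_1$ forces $l_1$ to be strictly smaller (in pure lex order) than every proper suffix of itself; in particular $l_1$ is strictly smaller than the suffix $m_2\cdots m_{j-1} m_j'$. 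On the other hand, the non-increasing condition $m_1\geq m_2\geq \cdots \geq m_j$, together with the Lyndon minimality of the $m_i$'s, yields $m_1 \cdots m_{j-1} m_j' \leq m_2\cdots m_{j-1} m_j'$, giving a contradiction by a direct comparison of leading letters. Hence $|l_1|=|m_1|$ and so $l_1=m_1$; cancelling this common prefix and applying the inductive hypothesis to the shorter word $l_2\cdots l_s = m_2\cdots m_t$ completes the argument.

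\textbf{Main obstacle.} The existence half and the Key Lemma are routine once $L1$ and $L2$ are in hand. The delicate point is the uniqueness step, where one must carefully track how the boundaries of the two factorizations overlap and simultaneously exploit the Lyndon property (minimality under rotation) and the non-increasing condition. In particular, the nuisance case in which a shorter Lyndon word is a proper prefix of a longer Lyndon word (e.g.\ $a$ is a prefix of $aab$ over $\{a<b\}$) requires the careful leading-letter comparison sketched above.
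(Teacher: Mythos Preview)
The paper does not prove this statement: it is recorded as a \emph{Fact} with a citation to Lothaire, so there is no ``paper's own proof'' to compare against. Your existence argument and the Key Lemma are the standard ones and are fine.

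Your uniqueness argument, however, has a genuine gap. You write $l_1 = m_1 m_2 \cdots m_{j-1} m_j'$ and then, from the Lyndon property, correctly deduce $l_1 < m_2 \cdots m_{j-1} m_j'$. But the second inequality you claim, ``$m_1 \cdots m_{j-1} m_j' \leq m_2\cdots m_{j-1} m_j'$'', is just $l_1 \leq (\text{that same suffix})$ again---it points the same way and yields no contradiction. If you intended the reverse inequality, note that $m_1 \geq m_2$ does \emph{not} give $m_1 u \geq m_2 v$ ``by comparison of leading letters'': one Lyndon word can be a proper prefix of another (e.g.\ $a$ and $ab$, or $ab$ and $abb$), so $L2$ does not apply and the leading-block comparison stalls.

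The standard fix is to look at the \emph{shortest} relevant suffix of $l_1$, namely $m_j'$ itself (taken nonempty; if $l_1$ ends exactly at a block boundary, let $m_j' = m_j$). Then chain four inequalities:
\[
l_1 \;<\; m_j' \;\leq\; m_j \;\leq\; m_1 \;<\; l_1,
\]
where the first is the Lyndon property of $l_1$ (proper suffix), the second is because $m_j'$ is a prefix of $m_j$, the third is the non-increasing condition, and the fourth is because $m_1$ is a proper prefix of $l_1$. This is the contradiction you want; after it, $|l_1| = |m_1|$, hence $l_1 = m_1$, and your induction on $|w|$ finishes the proof.
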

The following useful facts are
extracted from  \cite{Lo},  Section 5.1..

\begin{facts}
\label{fact_lyndonwords}
\begin{enumerate}
\item
\label{fact_lyndonwords3}
For all $w \in L$, the equality $w = ab$,
with $a,b\in X^{+}$, implies $a < w < b.$
\item
\label{fact_lyndonwords1}
 If $a<b$ are Lyndon words then $ab$ is a
Lyndon word, so $a < ab < b$.
\item
\label{fact_lyndonwords4}
 If $b$ is the longest proper right segment
of $w$ which is a Lyndon word, then $w = ab$, where $a$ is a Lyndon
word. This is called \emph{the  (right) standard factorization of $w$} and
denoted as $w =(a,b) =(a,b)_r$.
\item
Analogously, suppose $u$ is the longest proper left segment of $w$ which is a Lyndon word, then $w = uv$, where $v$ is also a Lyndon
word. This determines the \emph{left standard factorisation of $w$} denoted by $w=(u,v)_{l}$
\end{enumerate}
\end{facts}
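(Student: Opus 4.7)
The plan is to derive all four facts from a single equivalent characterization of Lyndon words: a word $w \in X^{+}$ is Lyndon if and only if $w < s$ in the pure lex order for every proper nonempty suffix $s$ of $w$. This characterization is a standard exercise from the original definition (minimality in the conjugacy class under cyclic rotation), and once it is available each of the four statements reduces to careful case analysis using only properties L1 and L2 of the order $<$.

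For part (\ref{fact_lyndonwords3}), writing $w = ab$ with $a,b \in X^{+}$, the inequality $a < w$ is immediate because $a$ is a proper left factor of $w$, and in the pure lex order a word is strictly smaller than each of its proper extensions. The inequality $w < b$ is just the suffix characterization applied to the proper suffix $b$ of $w$.

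For part (\ref{fact_lyndonwords1}), the work is to verify that $ab$ is Lyndon whenever $a<b$ are Lyndon. First, $ab$ is nonperiodic: a common period of $a$ and $b$ would force each to be a power of a common word, which combined with the Lyndon property of $a$ and $b$ and the strict inequality $a < b$ gives a contradiction. Next, I would check that every proper suffix $s$ of $ab$ satisfies $ab < s$, splitting into three sub-cases: (i) $s = b$; (ii) $s$ is a proper nonempty suffix of $b$; (iii) $s = a'b$ with $a'$ a proper nonempty suffix of $a$. In case (i), if $a$ is not a prefix of $b$ then L2 applied to $a<b$ gives $ab < b$ directly; if $a$ is a prefix of $b$, write $b = ac$ and use part (\ref{fact_lyndonwords3}) (applied to the Lyndon word $b$) to get $a < c$, which upgrades $ab = aac$ to be strictly less than $ac = b$. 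Case (ii) follows from (i) together with $s > b$. In case (iii), $a' > a$ since $a$ is Lyndon, and then L2 yields $a'b > ab$. Once $ab$ is known to be Lyndon, the chain $a < ab < b$ is exactly what part (\ref{fact_lyndonwords3}) provides.

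For parts (\ref{fact_lyndonwords4}) and the symmetric left version, given $w \in L$ with $|w|\geq 2$ there is always at least one proper Lyndon suffix (any terminal letter). Let $b$ be the longest proper Lyndon suffix of $w$ and write $w = ab$. The only thing that requires argument is that $a$ is itself Lyndon. If $a$ were not Lyndon, I would factor it non-increasingly as $a = l_1 l_2 \cdots l_s$ using Lyndon's theorem (Fact \ref{LyndonThm}); then part (\ref{fact_lyndonwords3}) applied to $w$ gives $l_s < w < b$, and part (\ref{fact_lyndonwords1}) applied to $l_s < b$ produces the Lyndon word $l_s b$, a proper Lyndon suffix of $w$ strictly longer than $b$, contradicting maximality. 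The left standard factorization is handled symmetrically. The main obstacle in the whole plan is the sub-case of (\ref{fact_lyndonwords1}) in which $a$ is a proper prefix of $b$: here L2 cannot be applied to $a$ and $b$ directly and one must unpack the Lyndon structure of $b$ via (\ref{fact_lyndonwords3}) before comparing $ab$ with $b$.
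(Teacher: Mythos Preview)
The paper does not prove these facts; it simply extracts them from Lothaire, \emph{Combinatorics on Words}, Section~5.1. Your argument follows the standard route taken there and is essentially correct, but two steps are stated imprecisely.

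In part~(\ref{fact_lyndonwords1}), sub-case (i) with $a$ a proper prefix of $b = ac$: part~(\ref{fact_lyndonwords3}) applied to the Lyndon word $b$ yields the chain $a < b < c$, and it is the second inequality $b < c$ (equivalently $ac < c$) that you actually need, not $a < c$. Indeed, $ab < b$ means $a(ac) < ac$, which by L1 is exactly $ac < c$. From $a < c$ alone one cannot invoke L2 when $a$ happens to be a prefix of $c$.

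In part~(\ref{fact_lyndonwords4}), the inequality $l_s < w$ is not a direct application of part~(\ref{fact_lyndonwords3}) to $w$; rather, $l_1$ is a proper prefix of $w$, hence $l_1 < w$, and then $l_s \leq l_1$ from the nonincreasing factorization gives $l_s < w < b$. Your claim that the left version is ``handled symmetrically'' is correct, though the chain runs in the opposite direction: if $v$ is not Lyndon, factor $v = m_1 \cdots m_t$ with $t \geq 2$; then $m_t$ is a proper suffix of $w$, so $u < w < m_t \leq m_1$, and by part~(\ref{fact_lyndonwords1}) the word $u m_1$ is a Lyndon proper prefix of $w$ strictly longer than $u$, contradicting maximality.
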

\begin{notation}
\label{overlapnotation} \cite{GIF} For the monomials $a, b\in
X^{+}$ we  write $\;\overbrace{a,b}\;$ if  $a = uv$, $b = vw$, where
$u,w \in X^{\ast}$, $v, uw \in X^{+}$ ($b = aw,$ or $a = ub$ is
possible).
We write $\overbrace{a, \tau, b}$ if  $\tau \sqsubset ab,$ and
$\tau$ overlaps with $a$ and $b$ so that $(\overbrace{a,\tau})$ and
$(\overbrace{\tau, b})$.
\end{notation}

\begin{lemma}
\label{MonLynLemOverlap} \cite{GIF}
\begin{enumerate}
\item
\label{MonLynLemOverlap1} Let $uv$ and $vw$ be Lyndon words, $u, v \in X^{+}$. Then
$uvw$ is a Lyndon word.
\item
\label{MonLynLemOverlap2} If $a, b \in L$ and
 $\;\overbrace{a,b}\;$, then $a<b$.
\item
\label{MonLynLemOverlap3} Suppose that $a, b, w \in L$, with
$\overbrace{a, w, b}$. Then $a < w < b$.
\item
\label{MonLynLemOverlap4} Let $a < b$ be Lyndon words. Then $a^kb^l$
are Lyndon words for all $k,l \geq 1$.
\item
\label{MonLynLemOverlap5} If $w = (a,b)$ is the right standard factorization of $w$, then the (right)
standard factorization of $(a^kb)$ is $(a, a^{k-1}b)$.  Analogously, if $w = (a,b)_l$ is the left standard factorization
of the Lyndon word $w$ then  the left standard factorization of $ab^k$ is
$(ab^k)_l = (ab^{k-1}, b)$.
\end{enumerate}
\end{lemma}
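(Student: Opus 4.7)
The plan is to prove the parts in the order (2), (3), (4), (5), (1), since part (1) is the most subtle and benefits from the tools set up by the others. The standard characterization used throughout is that a word $W$ is Lyndon iff $W < s$ lexicographically for every proper nonempty suffix $s$ of $W$. One elementary transfer observation will be invoked repeatedly: if $p < q$ with $|p| > |q|$, then the first position where they disagree is some $k \leq |q|$, so appending the same word $r$ to the right of both sides preserves $pr < qr$.

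For part (2), I split into three cases by overlap type. If $u = 1$, then $a = v$ is a proper prefix of $b$, so $a < b$ directly. If $w = 1$, then $b = v$ is a proper nonempty suffix of the Lyndon word $a$, so $a < b$ by the Lyndon property. In the remaining case with $u, w \in X^+$, the word $v$ is a proper nonempty suffix of $a$ and a proper nonempty prefix of $b$; the Lyndon property of $a$ gives $a < v$, the prefix relation gives $v < b$, and transitivity gives $a < b$. Part (3) then follows by applying part (2) to each of the two overlaps $\overbrace{a, w}$ and $\overbrace{w, b}$ encoded in $\overbrace{a, w, b}$.

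For part (4), I induct twice using Fact \ref{fact_lyndonwords}(2). First fixing $l = 1$ and inducting on $k$: given $a^k b$ Lyndon, $a$ is a proper prefix so $a < a^k b$, whence Fact \ref{fact_lyndonwords}(2) yields $a \cdot (a^k b) = a^{k+1} b$ Lyndon. Then fixing $k$ and inducting on $l$: given $a^k b^l$ Lyndon, $b$ is a proper nonempty suffix so $a^k b^l < b$ by the Lyndon property, whence Fact \ref{fact_lyndonwords}(2) yields $(a^k b^l) b = a^k b^{l+1}$ Lyndon. For part (5), the word $a^{k-1} b$ is Lyndon by part (4) and a proper suffix of $a^k b$; any strictly longer proper suffix necessarily has the form $a' a^{k-1} b$ with $a'$ a nonempty proper suffix of $a$, and the Lyndon property of $a$ gives $a < a'$, which transfers to $a^{k-1} b < a' a^{k-1} b$ by the transfer observation, so $a' a^{k-1} b$ cannot be Lyndon. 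Hence $(a, a^{k-1} b)$ is the right standard factorization of $a^k b$, and the left standard factorization of $ab^k$ is handled by the symmetric argument.

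The main work is part (1). I show every proper nonempty suffix $s$ of $uvw$ satisfies $uvw < s$, partitioning by the starting position of $s$. If $s = u' v w$ for some nonempty proper suffix $u'$ of $u$, then $u' v$ is a proper nonempty suffix of the Lyndon word $uv$, so $uv < u' v$, and the transfer observation gives $uvw < u' v w = s$. If $s = vw$, then the Lyndon property of $uv$ applied to its proper suffix $v$ gives $uv < v$, which transfers to $uvw < vw = s$. Finally, if $s$ is any proper nonempty suffix of $vw$ (covering all suffixes of $w$, as well as all $v' w$ with $v'$ a proper suffix of $v$), the Lyndon property of $vw$ gives $vw < s$; combined with $uvw < vw$ just established, transitivity yields $uvw < s$. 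I had expected this last case to be the main obstacle, since $s$ can start deep inside $w$ and a direct character-by-character comparison with $uvw$ looks hopeless, but routing the comparison through the intermediate word $vw$ resolves it cleanly.
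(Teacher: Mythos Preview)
The paper does not prove this lemma; it is quoted from \cite{GIF} and stated without proof. Your argument is essentially correct and self-contained, and the organization (proving (2)--(5) first, then (1) via the suffix characterization routed through the intermediate word $vw$) is clean.

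There is one small imprecision in part (5). Your ``transfer observation'' is stated only for appending the \emph{same} word $r$ to both sides of $p<q$ when $|p|>|q|$. But to pass from $a<a'$ (with $a'$ a proper suffix of the Lyndon word $a$) to $a^{k-1}b < a'\,a^{k-1}b$ you are appending $a^{k-2}b$ to $a$ and $a^{k-1}b$ to $a'$, which are different words. The conclusion is still correct, but the justification is the stronger property \textbf{L2} recalled in the paper: since $|a'|<|a|$, the word $a'$ cannot lie in $aX^{+}$, so $a<a'$ forces $a\,r < a'\,s$ for \emph{all} $r,s\in X^{*}$. Equivalently, because $a$ is Lyndon and $a'$ is a proper suffix, $a'$ is not a prefix of $a$, so the comparison $a<a'$ is decided by a letter at some position $j\le |a'|$; then any extensions on the right preserve the inequality. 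With this one-line fix, the argument for (5) is complete (and note the case $k=1$ is just the hypothesis $w=(a,b)$). Parts (1)--(4) are fine as written; in particular, in part (1) you really do append the same $w$ on both sides, so your transfer observation applies verbatim there.
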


\begin{lemma} \label{MonLynLemFactorization}\cite{GIF}
Let $l_1 \geq l_2 \geq \cdots \geq l_s$ be Lyndon words, $s \geq 2$.
If a Lyndon word $u$ is a subword of $l_1 l_2 \cdots l_s$, then $u$
is a subword of $l_i$, for some $i$, $1 \leq i \leq s$
\end{lemma}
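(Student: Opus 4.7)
The plan is to induct on the length $|u|$, combining the right standard factorization of a Lyndon word with the overlap inequality Lemma~\ref{MonLynLemOverlap}~(\ref{MonLynLemOverlap3}). Fix an arbitrary occurrence of $u$ as a factor of $l_1 l_2 \cdots l_s$; I will show that this specific occurrence lies inside a single $l_i$, which proves the lemma. The base $|u|=1$ is immediate, since a single letter sits inside whichever $l_i$ contains its position.

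For $|u|\geq 2$, I will invoke Facts~\ref{fact_lyndonwords}~(\ref{fact_lyndonwords4}) to write $u=ab$ with $a,b$ Lyndon and $|a|,|b|<|u|$. The fixed occurrence of $u$ induces occurrences of $a$ at the same starting position and of $b$ immediately after it, so by the inductive hypothesis the occurrence of $a$ lies inside some $l_i$ and that of $b$ inside some $l_j$. Since $b$ begins at the very next position where $a$ terminates, a direct accounting of character indices in $l_1\cdots l_s$ forces $j\in\{i,i+1\}$: the case $j=i$ gives exactly the desired conclusion that $u=ab$ sits inside $l_i$.

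In the remaining case $j=i+1$, the same positional inequalities collapse to equalities, forcing $a$ to end exactly at position $|l_1\cdots l_i|$ and $b$ to begin at position $|l_1\cdots l_i|+1$. Thus $a$ is a nonempty suffix of $l_i$ and $b$ is a nonempty prefix of $l_{i+1}$, so $u=ab$ is a factor of $l_i l_{i+1}$ straddling the boundary, realizing the overlap configuration $\overbrace{l_i,u,l_{i+1}}$ of Notation~\ref{overlapnotation} with all three words Lyndon. Lemma~\ref{MonLynLemOverlap}~(\ref{MonLynLemOverlap3}) then yields $l_i<u<l_{i+1}$, contradicting the hypothesis $l_i\geq l_{i+1}$ and forcing $j=i$, which completes the induction. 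The main subtle point is the positional bookkeeping that pins $a$ and $b$ \emph{exactly} on opposite sides of the $l_i/l_{i+1}$ boundary in the critical case, so that the overlap hypothesis of Lemma~\ref{MonLynLemOverlap}~(\ref{MonLynLemOverlap3}) is literally satisfied; once that is in place, the contradiction with the non-increasing assumption on the $l_i$ is immediate.
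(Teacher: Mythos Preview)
The paper does not supply its own proof of this lemma; it is quoted from \cite{GIF} without argument, so there is nothing to compare against. Your inductive argument via the standard factorization $u=ab$ together with Lemma~\ref{MonLynLemOverlap}~(\ref{MonLynLemOverlap3}) is correct, and the positional bookkeeping forcing $j\in\{i,i+1\}$ and, in the boundary case, pinning $a$ as a suffix of $l_i$ and $b$ as a prefix of $l_{i+1}$, is sound.

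One small point worth making explicit: the overlap notation $\overbrace{l_i,u,l_{i+1}}$ in Notation~\ref{overlapnotation} requires $u\sqsubset l_i l_{i+1}$, i.e.\ that $u$ be a \emph{proper} subword. This fails in the extreme sub-case $a=l_i$ and $b=l_{i+1}$, where $u=l_i l_{i+1}$ exactly. But then $u$ is a Lyndon word written as a nontrivial product, so Facts~\ref{fact_lyndonwords}~(\ref{fact_lyndonwords3}) already gives $l_i<u<l_{i+1}$, the same contradiction with $l_i\geq l_{i+1}$. With that one-line addendum the proof is complete.
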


\subsection{More results on Lyndon words}
We shall prove some additional results on Lyndon words which will be used in the paper.
\begin{lemma}
\label{VIP_lemma11}
Suppose $a, w\in L,  |a|, |w|\geq 2$,  and $a$  is a proper subword of $w$.
\begin{enumerate}
\item If $w = (u,v)$ is the right standard factorization of $w$,
and $a$ is not a left segment of $w$,
then either $a \sqsubset u$, or $a \sqsubseteq v.$
\item If $w = (p,q)_l$ is the left standard factorization of $w$,
and $a$ is not a right segment of $w$,
then either $a \sqsubseteq p,$ or $a \sqsubset q$.
\end{enumerate}
\end{lemma}
\begin{proof}
We sketch a proof of (1).
Assume  the contrary. Then $a$ overlaps with $v$, so that $a = a_1a_2 \in L$, $v = a_2v_2\in L$, where $|a_1| \geq 1, |a_2| \geq 1$, and $a_1a_2v_2$ is a proper segment of $w$.
By Lemma  \ref{MonLynLemOverlap} (2) the product $a_1a_2v_2 = a_1v$ is a Lyndon word, moreover,
$a_1v$ is a proper right segment of $w$ with $|a_1v| >|v|$. By assumption $v$ is the longest
Lyndon word which occurs as a right segment of $w$, a contradiction.
The proof of part (2) is analogous.
\end{proof}
\begin{proposition}
\label{N_W_Prop}
Let $N$ be a  set of Lyndon words closed under
taking Lyndon subwords, let $W= W(N)$ be the corresponding antichain
of Lyndon words, see Remark \ref{W(N)remark} (2).
\begin{enumerate}
\item
\label{N_W_Prop21}
If $u, v \in N, u< v$ and no Lyndon atom $a \in
N$ satisfies $u < a <v$, then $uv \in W$.
\item
\label{N_W_Prop22} Suppose $N_0 = \{l_1<l_2< \cdots < l_k\} \subseteq N$ is a discrete convex in
$N$,  that is $N$ does not contain a word $u$ with $l_i < u < l_{i+1}$.
Then $l_il_{i+1} \in W$ for every pair $l_i, l_{i+1} \in N_0$.
\item
\label{N_W_Prop23}
If $N$ is a finite set of order $d$, then $d-1 \leq |W|$.
\end{enumerate}
\end{proposition}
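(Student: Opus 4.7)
The plan is to treat the three parts in order, with part~(1) doing the real work and parts~(2) and~(3) following by iteration and counting.

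For part~(1), since $u<v$ in $L$, Facts~\ref{fact_lyndonwords}(\ref{fact_lyndonwords1}) give $uv\in L$, and Facts~\ref{fact_lyndonwords}(\ref{fact_lyndonwords3}) applied to $uv=u\cdot v$ give $u<uv<v$. Thus if $uv$ were in $N$ it would be a Lyndon atom strictly between $u$ and $v$, violating the hypothesis; hence $uv\notin N$, so $uv$ is not $W$-normal and must contain some element of $W$ as a subword. To conclude $uv\in W$ it suffices, by the minimality characterization of $W$ recalled at Definition~\ref{obstructions2def} and~(\ref{N1eq1}), to verify that every proper subword of $uv$ lies in $\mathfrak{N}(W)$. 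Subwords sitting entirely inside $u$ or inside $v$ are automatically normal, since $\mathfrak{N}(W)$ is an order ideal and $u,v\in N\subseteq\mathfrak{N}(W)$; the substance is in the straddling subwords.

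The heart of the argument is to forbid any Lyndon $w\in W$ from being a proper straddling subword of $uv$. Writing such a hypothetical $w=u_2v_1$ with $u_2$ a nonempty suffix of $u$, $v_1$ a nonempty prefix of $v$, and $(u_2,v_1)\neq(u,v)$, a combination of Facts~\ref{fact_lyndonwords}(\ref{fact_lyndonwords3}) and Lemma~\ref{MonLynLemOverlap}(\ref{MonLynLemOverlap3}), handled by three boundary subcases, yields $u<w<v$. I then use the right standard factorization $w=(w_1,w_2)_r$ of Facts~\ref{fact_lyndonwords}(\ref{fact_lyndonwords4}): one has $w_1,w_2\in L$ and $w_1<w<w_2$, and because $W$ is an antichain of Lyndon words, the proper subwords $w_1,w_2$ of $w\in W$ are forced to be $W$-normal, so $w_1,w_2\in N$. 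Combining with $u<w<v$ gives $w_1<v$ and $w_2>u$ at no cost, so the task reduces to eliminating the diagonal configurations in which $w_1\leq u$ and $w_2\geq v$ hold simultaneously. The equality $(w_1,w_2)=(u,v)$ is excluded since $w\neq uv$. The case $w_1=u,\;w_2>v$ is dispatched by the Lyndon no-border property of $u$: a border-style argument forces any occurrence of $w=u\cdot w_2$ in $uv$ to begin at position~$1$, whereupon $w_2$ becomes a proper prefix of the Lyndon word $v$ and so $w_2<v$, a contradiction. The symmetric case $w_1<u,\;w_2=v$ is eliminated dually via the no-border property of $v$ and the left standard factorization. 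The remaining case $w_1<u,\;w_2>v$ is handled by Lemma~\ref{VIP_lemma1}(1) applied to the right standard factorization of $uv$ (choosing $w$ to be of minimal length among straddling $W$-subwords of $uv$ if necessary): this confines $w$ into a factor of $(uv)_r$ or into a left segment of $uv$, reducing to one of the previous cases. In every case one produces $w_i\in N$ strictly between $u$ and $v$, contradicting the hypothesis; hence $uv\in W$.

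Parts~(2) and~(3) now fall out. For~(2), the discrete-convexity condition on $N_0\subseteq N$ says no Lyndon atom of $N$ lies strictly between consecutive $l_i,l_{i+1}$ in the lexicographic order, so part~(1) applied to each such pair yields $l_il_{i+1}\in W$. For~(3), apply~(2) with $N_0=N=\{l_1<\cdots<l_d\}$ to produce $d-1$ products $l_il_{i+1}\in W$ for $1\leq i\leq d-1$; these are pairwise distinct because their first Lyndon factor determines $i$, so $|W|\geq d-1$. The essential difficulty is concentrated in part~(1), specifically in eliminating the diagonal configurations of the right standard factorization of a hypothetical straddling $w\in W$; the rest is combinatorial packaging.
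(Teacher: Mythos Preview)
Your overall architecture for part~(1) matches the paper's: reduce to showing that no $w\in W$ can be a \emph{proper} subword of $uv$ straddling the $u$--$v$ boundary, then take the standard factorization $w=(a,b)$ and locate an atom strictly between $u$ and $v$. Parts~(2) and~(3) are fine.

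The difference is in how the contradiction is extracted. The paper does a \emph{positional} trichotomy on the factorization $w=ab$ inside $uv$: either (i) $a$ itself straddles the $u$--$v$ boundary, so Lemma~\ref{MonLynLemOverlap}(\ref{MonLynLemOverlap3}) gives $u<a<v$; or (ii) $b$ straddles, giving $u<b<v$; or (iii) the $a$--$b$ break coincides with the $u$--$v$ break, so $a$ is a suffix of $u$ and $b$ is a prefix of $v$, whence $a\geq u$ and $b\leq v$ with at least one strict. In each case an atom from $\{a,b\}\subset N$ lands in the open interval $(u,v)$. Three lines, no border arguments, no appeal to Lemma~\ref{VIP_lemma1}.

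Your route instead first establishes $u<w<v$ and then case-splits on the order-positions of $w_1,w_2$ relative to $u,v$. The border arguments for the cases $w_1=u$ and $w_2=v$ are correct (the reference to the left standard factorization in the second of these is a red herring: you are still using the right factorization $(w_1,w_2)_r$ with $w_2=v$). The genuine gap is in your final subcase $w_1<u$ and $w_2>v$: the appeal to Lemma~\ref{VIP_lemma1}(1) on the right standard factorization of $uv$ does not obviously reduce to anything, because $(uv)_r$ need not equal $(u,v)$, and you never explain how landing $w$ inside the right factor of $(uv)_r$ helps. In fact this subcase is \emph{vacuous}: the paper's positional trichotomy shows that in case~(i) one has $w_1>u$, in case~(ii) one has $w_2<v$, and in case~(iii) one has $w_1\geq u$; so $w_1<u$ and $w_2>v$ cannot hold simultaneously. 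Once you see this, your detour through border arguments becomes unnecessary and you recover the paper's short proof.
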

\begin{proof}
(\ref{N_W_Prop21}).  By hypothesis there is
no Lyndon atom $a$ such that $u < a < v$. By Facts \ref{fact_lyndonwords} $uv \in L$ and $u
< uv < v$, hence  $uv \in (W)$. We have to show that $uv
\in W.$
Assume the contrary, then (since $uv\in (W)$) some $w\in W$, is a proper subword of
$uv$. Let $w = ab$, $a, b \in L$ be the standard factorization of
$w$. By Remark \ref{N_W_Prop_Fact} (\ref{N_W_Prop1}),  $a,b \in
N,$ and by Facts \ref{fact_lyndonwords} $a < ab<b$.  Three cases are
possible: (i) $a$ overlaps with $u$ and $v$, so that
$\overbrace{u,a,v}$ ; (ii) $b$ overlaps with $u$ and $v$, so that
$\overbrace{u, b, v}$; (iii) $u = u_1a$,  $\;v=bv_1$, $u_1v_1 \neq
1$. Assume (i) is in force. Then Lemma \ref{MonLynLemOverlap}
(\ref{MonLynLemOverlap3}) implies $u < a< v$, which contradicts the
hypothesis. Similarly, in case (ii) the relation $\overbrace{u, b,
v}$ implies  $u <b<v$, which is impossible. Suppose (iii) holds.
Without loss of generality we may assume that $a$ is a proper right
segment of $u \in L$, so $u < a$. Now the inequalities
\begin{equation}
\label{eqinequalities} u < a < ab < b \leq v; \quad  a \in N,
\end{equation}
where $a,b \in N$ contradict the hypothesis. The case when $b$ is a
proper left segment of $v$  is analogous, which proves (\ref{N_W_Prop21}).
Part (\ref{N_W_Prop22}) follows straightforwardly from
(\ref{N_W_Prop21}). Clearly, (\ref{N_W_Prop22}) implies (\ref{N_W_Prop23}).
\end{proof}

\begin{remark}
\label{N_W_Prop_Fact} \cite{GIF} Let $(N,W)$ be a Lyndon pair.
Then
\begin{enumerate}
\item
\label{N_W_Prop1} If $u$ is a proper Lyndon subword of some $w \in
W$ then
$u \in N$. In particular, if $w \in
W$ then the (right) standard factorisation  $w = (u,v)$
and the left standard factorisation $w = (a,b)_l$ satisfy  $a,b, u, v \in N.$
\item
\label{N_W_Prop3a}  If $N$ has finite order $d$, and $s$ is the
maximal length of words in $N$ then $W$ is also finite with $|W|
\leq d(d-1)/2$, and the length of each $w\in W$ is at most $2s.$
\item
\label{N_W_Prop4} Assume  $W$ is finite and let $m$ be the maximal
length of words in $W$. Then $N$ is finite if and only if every word
$l \in N$ has length $|l|\leq m-1$.
\end{enumerate}
\end{remark}

\subsection{Canonical dualities on sets of monomials}
For completeness  we consider the canonical dualities between (a) the notions "obstruction set", and "order ideal of monomials", see \cite{Anick86},
and (b) the notions "antichains $W$ of Lyndon words" and "Lyndon atoms, $N=N(W)$", \cite{GIF}.

Let  $I$ be an ideal in $K\asX$, $A =
K\asX/I.$
We use the notation and terminology from the previous sections, in particular (\ref{N1eq1}), (\ref{X1eq2}), and Definition-notation \ref{Lyndonatomsdef}.
We have seen that the set of monomials
 $\overline{I}$ contains
uniquely determined maximal antichain of monomials $W= W(I)$
(with respect to $\sqsubset$).
$W$ satisfies  (i) every element of $W$ is
minimal with respect to "$\sqsubset$", and (ii) each $a\in \overline{I}$ contains
as a subword (a segment) some $v \in W$, i.e. $v \sqsubseteq a$. Denote by $(W)$ the two-sided ideal of $K\asX$ generated by $W$.

Clearly, the set of normal words $\mathfrak{N}=\mathfrak{N}(W)$ is uniquely determined by the set of obstructions $W$
and  is characterized by the properties:
\[
\begin{array}{llll}
\textbf{N1.}\;\; X \subseteq \mathfrak{N}; &\textbf{N2.}\;\; u \in \mathfrak{N}, v \sqsubseteq u \Longrightarrow v \in \mathfrak{N}; &
\textbf{N3.}\;\; u \in  \mathfrak{N} \Longleftrightarrow u \notin (W).
\end{array}
\]
We have seen in the introduction that every
set of monomials  $\mathfrak{N} \subseteq X^{\ast}$
which satisfies conditions \textbf{N1} and \textbf{N2} determines uniquely an antichain of monomials $W\subset X^{+}$, so that condition \textbf{N3} holds and $W$ is the
set of obstructions for $\mathfrak{N}$.

 \emph{An obstruction set} $W= W(\mathfrak{N})$ \emph{is characterized by the following properties:}
\[\begin{array}{l}
\textbf{V1.}\quad  W \bigcap X = \emptyset.\\
\textbf{V2.}\quad \forall\; u \in X^{\ast}, v \in W, u \sqsubset v,  u \neq v \Longrightarrow u \in \mathfrak{N}.\\
\textbf{V3.}\quad \forall a \in X^{+},  v \in W, v \sqsubseteq a \Longrightarrow a \notin \mathfrak{N}.
\end{array}
\]
\begin{definition}
\label{obstructions_and_oim_def}
\begin{enumerate}
\item
A set of monomials  $\mathfrak{N} < X^{+}$ which satisfies conditions
\textbf{N1} and \textbf{N2} is called \emph{an order ideal of monomials} (o.i.m.), see \cite{Anick86}.
 \item
The antichain of monomials  $W= W(\mathfrak{N})$ described above is called \emph{the set of obstructions for} $\mathfrak{N}$.
When $W$ is considered as $W = W(I)$, it is also referred to as \emph{the set of obstructions for} \emph{the algebra} $A =K\asX /I.$
\end{enumerate}
\end{definition}
\begin{remark}
\label{W(N)remark}
\begin{enumerate}
\item
Suppose that $W \subset X^{+}$ is an antichain of Lyndon monomials, and $\mathfrak{N}=\mathfrak{N}(W)$.
Then $W$ determines
uniquely a set of Lyndon atoms $N=N(W):= \mathfrak{N}(W) \bigcap L$. It satisfies
\[\begin{array}{ll}
\text{\textbf{C1.}}& X \subseteq N.\\
\text{\textbf{C2.}}&\forall v \in L , \forall u \in N,   v
\sqsubseteq u \Longrightarrow v \in N.\\
\text{\textbf{C3.}}& u \in N \; \Longleftrightarrow \; u \in L \;
\;\text{and}\;\; u \notin (W).
\end{array}
\]
\item
Conversely, each set $N$ of Lyndon words satisfying conditions
\textbf{C1} and \textbf{C2} determines uniquely an antichain of
Lyndon monomials $W= W(N)$, such that condition \textbf{C3} holds,
and $N$ is the set of Lyndon atoms corresponding to $W$.
In fact $W$ is the unique maximal antichain of the complement $L - N.$
\end{enumerate}
\end{remark}
We systematize the above discussion and add some results from \cite{GIF}
in the following proposition.

\begin{proposition}
\label{N-WProp0}
\begin{enumerate}
\item
There exists a one-to-one correspondence between
the set $\mathbb{V}$ of all antichains $W$ in $X^{+}$ with $W \bigcap X =\emptyset$
 and the set $\aleph$  consisting of all order ideals of monomials
 $\mathfrak{N}$, satisfying condition \textbf{N1} and \textbf{N2}.
  In notation as above this correspondence is defined as
\[ \begin{array}{lll}
\Phi:& \mathbb{V} \longrightarrow \aleph \quad  &W \mapsto \mathfrak{N}(W)\\
\Phi^{-1}:&  \aleph \longrightarrow \mathbb{V}\quad  &\mathfrak{N}  \mapsto W(\mathfrak{N}).
\end{array}\]
\item\label{N-WProp02}
There are equalities
\[\mathfrak{N}(W(\mathfrak{N})) = \mathfrak{N}; \quad W(\mathfrak{N}(W))=W,\]
and each pair $(\mathfrak{N}(W), W)$ (respectively $(\mathfrak{N}, W(\mathfrak{N}))$  obtained
via this correspondence satisfies condition \textbf{N3}.
\item
\label{N-WProp01}
There exists a one-to-one correspondence between
the set $\mathbb{W}$ of all antichains $W$ of Lyndon words with
$X\bigcap W = \emptyset$ and the set $\mathbb{N}$ consisting of all
sets $N$ of Lyndon words satisfying conditions \textbf{C1} and
\textbf{C2}. In notation as above this correspondence is defined as
\[ \begin{array}{lll}
\phi:& \mathbb{W} \longrightarrow \mathbb{N} \quad  &W \mapsto N(W)\\
\phi^{-1}:&  \mathbb{N} \longrightarrow \mathbb{W}\quad  &N \mapsto W(N).
\end{array}\]
\item\label{N-WProp02}
There are equalities
\[N(W(N)) = N; \quad W(N(W))=W,\]
and each pair $(N= N(W), W)$ (respectively $(N, W=W(N))$  obtained
via this correspondence satisfies condition \textbf{C3}.
\item \cite{GIF}
\label{N-WProp04} Each finite antichain of Lyndon words $W \in \mathbb{W}$
determines a monomial algebra $A_W = K \asX /(W)$ of finite global
dimension, $\gldim A_W \leq |W|+1$  .
\item  \cite{GIF}
\label{N-WProp023}
If  $ N \in \mathbb{N}$ is a finite set of Lyndon words of order $d$, then the corresponding antichain $W = W(N)$ is also finite
with $|W| \leq d(d-1)/2$.
\item
\label{N-WProp30} Each $N\in \mathbb{N}$ with \textbf{C1} and \textbf{C2} determines uniquely a
monomial algebra  $A_W = K \asX /(W)$, with a set of defining relation $W = W(N)$
and a set
of Lyndon atoms precisely $N$.
The algebra $A_W$ has polynomial growth of degree $d$ \emph{iff} $|N|=d$.
\end{enumerate}
\end{proposition}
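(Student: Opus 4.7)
The plan is to prove parts (1)--(4) as elementary bookkeeping of a pair of Galois-type correspondences, invoke \cite{GIF} for parts (5)--(6) with a sentence of justification each, and concentrate the real work on part (7), the polynomial-growth criterion.

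For (1)--(2), I would, given an antichain $V \subset X^{+}$ with $V \cap X = \emptyset$, set $\mathfrak{N}(V) := \{a \in X^{\ast} : v \not\sqsubseteq a\ \text{for every}\ v \in V\}$, which immediately satisfies \textbf{N1}--\textbf{N2}. Conversely, for an order ideal $\mathfrak{N} \in \aleph$, I would let $V(\mathfrak{N})$ be the set of minimal elements of $X^{+} \setminus \mathfrak{N}$ with respect to $\sqsubset$; Lemma \ref{W_def} guarantees this is an antichain, and \textbf{N1} forces $V(\mathfrak{N}) \cap X = \emptyset$. The identities $\mathfrak{N}(V(\mathfrak{N})) = \mathfrak{N}$ and $V(\mathfrak{N}(V)) = V$ unfold directly from the definitions (for the first, $X^{+} \setminus \mathfrak{N}$ is the upward closure under $\sqsupseteq$ of its minimal elements; for the second, every $v \in V$ is already minimal in $V \subset X^{+} \setminus \mathfrak{N}(V)$ because $V$ is an antichain), and \textbf{N3} is automatic. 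Parts (3)--(4) run the identical argument intersected with $L$: set $N(W) := \mathfrak{N}(W) \cap L$ for $W \in \mathbb{W}$, and for $N \in \mathbb{N}$ define $W(N)$ to be the unique maximal antichain of $L \setminus N$ with respect to $\sqsubset$; conditions \textbf{C1}--\textbf{C3} then translate \textbf{N1}--\textbf{N3} under this intersection.

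Parts (5) and (6) are cited from \cite{GIF}. For (5), I would note that Corollary \ref{nchaincor} bounds $\gldim A_W$ in terms of the length of chains on $W$, and the overlap restrictions in Lemma \ref{MonLynLemOverlap} for Lyndon words limit such chains to length $\leq |W|$. For (6), each $w \in W$ has a right standard factorisation $w = (u,v)$ with $u,v \in N$ and $u < v$ (Remark \ref{N_W_Prop_Fact}(\ref{N_W_Prop1})), and distinct $w$'s give distinct ordered pairs, so the map $w \mapsto (u,v)$ injects $W$ into $\{(u,v) \in N \times N : u<v\}$, giving $|W| \leq d(d-1)/2$.

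For part (7), the existence and uniqueness of $A_W$ with $N(W) = N$ follow from (3)--(4). For the growth equivalence, I would combine Lyndon's theorem (Fact \ref{LyndonThm}) with Lemma \ref{MonLynLemFactorization}: a non-increasing Lyndon product $l_1 \geq \cdots \geq l_s$ is $W$-normal if and only if every $l_i$ lies in $N$. Hence the normal $K$-basis $\mathfrak{N}$ is in length-preserving bijection with the formal non-increasing products of elements of $N$. If $|N| = d$, then $H_{A_W}(t) = \prod_{l \in N}(1 - t^{|l|})^{-1}$, a rational function with pole of order exactly $d$ at $t = 1$, yielding polynomial growth of degree $d$. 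If $|N| = \infty$, then for any integer $k$ I pick a subset $\{l_1 > \cdots > l_k\} \subseteq N$ and count at least $\Omega(m^k)$ distinct normal monomials of length $\leq m$ among the $l_1^{e_1} \cdots l_k^{e_k}$; since $k$ is arbitrary, the growth cannot be polynomial. The main obstacle I anticipate is precisely this converse direction in (7): the subset-choice argument suffices, but it relies essentially on Lyndon's uniqueness-of-factorisation (Fact \ref{LyndonThm}) to guarantee that distinct formal non-increasing products produce distinct monomials in $X^{\ast}$, and on Lemma \ref{MonLynLemFactorization} to ensure such products remain $W$-normal.
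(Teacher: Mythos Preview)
Your proposal is correct and matches the paper's approach. The paper does not give a standalone proof of this proposition; it presents it as a systematisation of the preceding discussion (for parts (1)--(4)) together with citations to \cite{GIF} (for parts (5)--(6)), and part (7) is absorbed into Theorem~A of \cite{GIF}. Your treatment of (1)--(6) is exactly in this spirit.

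The one genuine difference is in part (7): where the paper simply invokes \cite{GIF}, Theorem~A~(2) for the equivalence ``$A_W$ has polynomial growth of degree $d$ iff $|N|=d$'', you supply a direct argument via Fact~\ref{LyndonThm} and Lemma~\ref{MonLynLemFactorization}. Your argument is sound: the key step---that a non-increasing Lyndon product $l_1\cdots l_s$ is $W$-normal iff each $l_i\in N$---uses precisely that $W$ consists of Lyndon words, so any $w\in W$ occurring as a subword must (by Lemma~\ref{MonLynLemFactorization}) lie inside a single $l_i$. The converse direction for infinite $N$ is handled correctly by your subset-counting, with Lyndon's uniqueness guaranteeing distinctness. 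This self-contained route buys independence from \cite{GIF} at the cost of reproving a special case of its Theorem~A; the paper's route is shorter but relies on the external reference.
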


\begin{example}
$X = \{x < y\},  \; A =K\asX/(xy^2+y^2x,\quad x^2y-yx^2 ).$
The algebra $A$ is s.f.p.- the defining relations form a Gr\"{o}bner basis of the ideal $I= (xy^2+y^2x,\quad x^2y-yx^2 )$.
The set of obstructions
$W= \{xxy,\quad xyy\}$
consists of Lyndon words.
The set of Lyndon atoms is
$N= \{d_1 = y > d_2= xy > d_3 = x\}$,
and $(N,W)$ is the corresponding Lyndon pair.
The normal basis of $A$ is
\[
\mathfrak{N} = \{d_1^{\alpha_1}d_2^{\alpha_2}d_3^{\alpha_3}\mid
\alpha_i \geq 0\}.
\]
$A$ is a classical AS-regular algebra of $\gldim A =3$, \textbf{type A}.
\end{example}

\section{Some combinatorial results on Lyndon pairs}
\label{More results}
In this section $X = \{x_1< x_2 < \cdots < x_n\}$, and $(N,W)$ is a Lyndon pair in $X^{+}$.

\subsection{Proof of Theorem II}
\label{proof2}
\begin{lemma}
\label{VIPlemma1}
Suppose $X = \{x_1, x_2, \cdots, x_n\}$, and let $W \subset X^{+} \setminus X$ be an antichain of monomials of arbitrary cardinality.
Let $\mathfrak{N}= \mathfrak{N}(W)$ be  the set of normal words modulo $W$, and
let $N= \mathfrak{N}\bigcap L$ be the set of normal Lyndon words.
The following conditions are equivalent.
\begin{enumerate}
\item
 $W$ is an antichain of Lyndon words.
 \item The set $\mathfrak{N}$ of normal words has the shape given in \ref{normalbasiseq_inf}.
\end{enumerate}
\end{lemma}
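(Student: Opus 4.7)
The plan is to establish the two implications separately, both resting on Lyndon's factorization theorem (Fact \ref{LyndonThm}) together with the localization principle of Lemma \ref{MonLynLemFactorization}: any Lyndon subword of a non-increasing Lyndon product $l_1 l_2 \cdots l_s$ must already lie inside some factor $l_i$. This is the precise tool that lets one test normality of a word one Lyndon factor at a time.

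For (1) $\Rightarrow$ (2) I would fix an arbitrary $w \in X^{+}$ and invoke Lyndon's theorem to write $w$ uniquely in grouped form as $w = l_1^{k_1} l_2^{k_2} \cdots l_s^{k_s}$ with $l_1 > l_2 > \cdots > l_s$ Lyndon and each $k_i \geq 1$. The target equivalence is $w \in \mathfrak{N} \Longleftrightarrow$ every factor $l_i$ lies in $N$. One direction is automatic, since a Lyndon subword of any $l_i$ is also a subword of $w$. The other direction is where the hypothesis $W \subset L$ enters: if some $u \in W$ were a subword of $w$, then because $u$ is Lyndon, Lemma \ref{MonLynLemFactorization} forces $u \sqsubseteq l_i$ for some $i$, so that $l_i$ would fail to be normal. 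Allowing the formal device $k_i = 0$ to encode absent atoms, this yields exactly the description (\ref{normalbasiseq_inf}) of $\mathfrak{N}$.

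For the converse (2) $\Rightarrow$ (1), I would pick any $w \in W$ and show it must be a single Lyndon word. Write its unique grouped Lyndon factorization $w = l_1^{k_1} \cdots l_s^{k_s}$ with strict decrease $l_1 > \cdots > l_s$ and argue by contradiction: if $s \geq 2$ or $k_1 \geq 2$, then each $l_i$ appears as a proper subword of $w$, so by the defining property of obstructions every proper subword of $w$ is normal, giving $l_i \in \mathfrak{N} \cap L = N$. But then the assumed shape (\ref{normalbasiseq_inf}) forces $w$ itself into $\mathfrak{N}$, contradicting $w \in W$. Hence $s = k_1 = 1$ and $w = l_1 \in L$. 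The argument goes through regardless of whether $N$ and $W$ are finite, and I do not foresee a serious obstacle beyond the clean invocation of Lemma \ref{MonLynLemFactorization} in the forward direction.
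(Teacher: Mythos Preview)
Your proof is correct and more direct than the paper's. The paper does not argue elementarily as you do: for $(1)\Rightarrow(2)$ it simply invokes Theorem~A of \cite{GIF} as a black box, and for $(2)\Rightarrow(1)$ it goes through the duality of Proposition~\ref{N-WProp0}, constructing the Lyndon antichain $W_0 = W(N)$ associated to $N$, then using Theorem~A of \cite{GIF} again to show $\mathfrak{N}(W_0) = \mathfrak{N}(W)$, whence $W_0 = W$ by the bijection $\Phi$. Your argument bypasses both the external citation and the duality machinery: Lemma~\ref{MonLynLemFactorization} plus the antichain property \textbf{V2} of $W$ suffice, and the contradiction in $(2)\Rightarrow(1)$ via the Lyndon factorization of a putative non-Lyndon obstruction is clean and self-contained. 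The gain is that your proof does not depend on \cite{GIF}; the paper's route, on the other hand, illustrates how the correspondence $W \leftrightarrow N \leftrightarrow \mathfrak{N}$ can do the work once one has it in hand.
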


\begin{proof}
As usual,  $A_W = K\asX/(W)$ denotes the monomial algebra with defining relations $w=0,  w \in W$.  Then $A_W\in \mathfrak{C} (X, W)$, and
the set $\mathfrak{N}$ is a $K$-basis of $A_W.$
Note that the set of normal Lyndon words $N = \mathfrak{N}\bigcap L$ satisfies conditions \textbf{C1} and \textbf{C2},
whenever $W$ is an arbitrary antichain of monomials, but $W$ is not necessarily a set of Lyndon words. Indeed,   $X \subseteq \mathfrak{N}\bigcap L$, so $N$ satisfies condition
\textbf{C 1}. Suppose $u\in N$, and $v\in L$ is a subword of $u$. Then, since $u \in \mathfrak{N}$, the word $v$ is also normal, (see \textbf{N2}) thus $v \in
\mathfrak{N}\bigcap L = N$, and therefore condition \textbf{C2} also holds.
Use now the canonical duality from Proposition \ref{N-WProp0} and consider the pair $(N, W_0=W(N)).$
Notice that if $W$ is an arbitrary antichain of monomials, and $N = \mathfrak{N}\bigcap L$, then, in general,  an inequality
$W_0=W(N) \neq W$ is possible.
We claim that $W_0 = W$ holds if and only if (\ref{normalbasiseq_inf}) is in force.

(1) $\Longrightarrow$ (2). Assume that $W$ is an antichain of Lyndon words, so $N= N(W)$ is the set of Lyndon atoms corresponding to $W$. Then
$A_W$ is a monomial algebra defined by Lyndon words in the sense of \cite{GIF}, hence, by Theorem A \cite{GIF},
the set of normal words $\mathfrak{N}=\mathfrak{N}(W)$ has the shape  (\ref{normalbasiseq_inf}).

 (2) $\Longrightarrow$ (1).
Assume that the set of normal words $\mathfrak{N}$ is presented via (\ref{normalbasiseq_inf}).

We have seen that the canonical duality from Proposition \ref{N-WProp0} defines uniquely the pair $(N, W_0),$
where $W_0 = W(N)$ is an antichain of Lyndon words.
Then the corresponding set of Lyndon atoms, $N(W_0) = N(W(N)) = N$.  Now
$A_0 = K\asX /(W_0)$ is a monomial algebra defined by Lyndon words in the sense of \cite{GIF}, and its set of Lyndon atoms is $N$.
Use now the first duality in Proposition \ref{N-WProp0}, and consider the pair $(W_0, \mathfrak{N}_0 = \mathfrak{N}(W_0))$.
  The set $\mathfrak{N}_0$ is the normal basis for the monomial algebra $A_0$. Hence by Theorem A \cite{GIF} again, $\mathfrak{N}_0$
  has a "PBW"-type presentation via the Lyndon atoms in $N$
which is exactly the set given in (\ref{normalbasiseq_inf}). Therefore $\mathfrak{N}(W_0) = \mathfrak{N}(W)$, so
$W_0= W(\mathfrak{N}(W_0)) = W(\mathfrak{N}(W)) = W $. It follows that the obstruction set $W$ is an antichain of Lyndon words.
\end{proof}

\begin{proof2}
Part (1) of Theorem II is proven by Lemma \ref{VIPlemma1}.

Suppose that $W$ is an antichain of Lyndon words, so $(N,W)$ is a Lyndon pair.
 Let $A = K \asX/ I \in \mathfrak{C} (X, W)$, let $A_W = K \asX/ (W)$ be the corresponding monomial algebra. Clearly, $A_W\in \mathfrak{C} (X, W)$.

 (\ref{theorem2_2}).
The two algebras $A$ and $A_W$
share the same set of obstructions $W$, the same set of Lyndon atoms $N = N(W)$, the same normal $K$-basis $\mathfrak{N}$ and the same Hilbert seies. In particular, $A_W$ is a monomial algebra
defined by Lyndon words in the sense of \cite{GIF}, so the results in \cite{GIF} are true for $A_W$.

There is an obvious equality $\gkdim A = \gkdim A_W.$ Therefore by  \cite{GIF}, Theorem A, (2) each of the algebras $A$ and $A_W$ has a polynomial growth of degree $d$ \emph{iff} $|N| = d$.
Assume $A$ has polynomial growth of degree $d$. Then $N$ has order $d$, say,  $N = \{l_1 > l_2> \cdots > l_d\}$.
Then Theorem A, \cite{GIF} implies $\gldim A_W = d$.  Hence there is a $d-1$-chain on
$W$ and there is no $d$-chain on $W$. Now the algebra $A$ satisfies the hypothesis of Theorem I, so $\gldim A = \gkdim A=d$, which gives (\ref{theorem2_2a}).

The two algebras $A$ and $A_W$
share the same  normal $K$-basis $\mathfrak{N}$ and the same Hilbert series, therefore (\ref{theorem2_2b}) is in force.
The monomial algebra $A_W$ satisfies the hypothesis of  \cite{GIF}, Theorem B,
and therefore its set of obstructions $W$ satisfies the inequalities
(\ref{bounds_W_eq}), which implies (\ref{theorem2_c}).
Part (\ref{theorem2_3}) follows from Theorem III, and is given for completeness.
The theorem has been proved. $\quad \quad\quad\quad\quad\quad \quad\quad\quad\quad\quad\quad\quad\quad\quad\quad \Box$
\end{proof2}

\subsection{Up to isomorphism, for every $d\geq 2$  there exists unique Lyndon pair $(N,W)$, where $N$ is connected and $|W| = |N|-1 = d-1$.}
Suppose $(N,W)$ is a Lyndon pair  in $X^{+}$, where $N=N(W)$ has finite order $|N|=d$.
It follows from Theorem II that
$W$ is a finite set with
\[
d-1 \leq |W|
\leq \frac{d(d-1)}{2}.\]
It follows from \cite{GIF}, Theorem B (2) that the upper bound
$|W| = d(d-1)/2$ is attained if and only if  $W = \{x_i x_j \mid 1 \leq i < j \leq d\}$, or equivalently,
$d=n$ and $N= X$, see Theorem II (2). In this case the class $\mathfrak{C}(X, W)$ contains all binomial skew polynomial rings with square-free relations generated by $X$, in the sence
 of \cite{GI96}. Each such a ring $A$ is an AS-regular PBW algebra of global dimension $n$. For example, for $g = 8$ there are more than $2
000$
non-isomorphic skew-polynomial rings with binomial square-free solutions, each of which is an AS-regular and defines a set-theoretic solution of YBE.
It is interesting to describe the other "extreme" case of Lyndon pairs, when the order $|W|$ attains the lower bound.
Theorem  \ref{Minim_W_Theor1} shows that \emph{if $N$ is connected} then the pure \emph{numerical datum}
 \begin{equation}
\label{numericaleq}
 |W|=d-1, \quad |N|= d
\end{equation}
 determines uniquely (i) the order of $X$, $\;|X|=2$, and (ii) unique Lyndon pair $(N, W)$ in $X^{+}$
 (up to isomorphisms of monomial algebras $A_W$).
Corollary \ref{MinimalWCor} implies that in this case the class $\mathfrak{C}(X, W)$
  contains
the universal enveloping $U=U \Lcal_{d-1}$,  where $\Lcal_{d-1}$ is the standard filiform
algebra of dimension $d$ and nilpotency class $d-1$. $U$ is an AS-regular algebra with $\gldim U =d$.

\begin{theorem}
\label{Minim_W_Theor1}
For every integer $d\geq 2$,  there exists unique  (up to isomorphism)
Lyndon pair $(N,W)$, where $N$ is a connected set of finite order $d$ and $|W| = d-1$.
Moreover, the following conditions are
equivalent:
\begin{enumerate}
\item
\label{Minim_W_Theor1a}
$|W|= d-1$ and $N$ is \emph{a connected set of Lyndon atoms}, see Def \ref{connect_def};
\item
\label{Minim_W_Theor2}
$N$ is \emph{a connected set of Lyndon atoms} and
\begin{equation}
\label{Weq}
W = \{l_il_{i+1}\mid 1 \leq i \leq d.\}
\end{equation}
\item
\label{Minim_W_Theor3}
 The set $X$ has order $2$, so
 $X = \{x< y\}$, and (up to isomorphism of monomial algebras $A_W$) the set of atoms $N= N(W)$ has the shape
\begin{equation}
\label{NFiliformeq}
N = \{x < xy <  xyy < \cdots <xy^{d-3}< xy^{d-2}< y\}.
\end{equation}
\item
\label{WFiliformeq}
\[W = \{xy^{i}xy^{i+1}\mid 0 \leq i \leq d-3 \}\bigcup\{xy^{d-1}\}. \]
\end{enumerate}
\end{theorem}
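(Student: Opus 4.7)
The plan is to prove the equivalences via the cyclic chain $(1)\Rightarrow(2)\Rightarrow(3)\Rightarrow(4)\Rightarrow(1)$, from which existence and uniqueness (up to isomorphism of $A_W$) of the Lyndon pair with $|N|=d$, $|W|=d-1$ and $N$ connected will both follow. The implication $(1)\Rightarrow(2)$ is essentially immediate from what has already been established: ordering the atoms as $N=\{l_1<l_2<\cdots <l_d\}$ and applying Proposition \ref{N_W_Prop}(\ref{N_W_Prop22}) with $N_0=N$ (trivially a discrete convex subset of itself) gives $l_il_{i+1}\in W$ for every $1\le i\le d-1$. These $d-1$ distinct obstructions together with the hypothesis $|W|=d-1$ force equality and yield the rigid description $W=\{l_il_{i+1}\mid 1\le i\le d-1\}$.

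The main technical step is $(2)\Rightarrow(3)$. First I would prove $|X|=2$. Note $X\subseteq N$, with $l_1=x_1$ and $l_d=x_n$ (the latter because the only Lyndon word on $X$ starting with $x_n$ is $x_n$ itself). Assume for contradiction $n\ge 3$; the Lyndon word $x_1x_3$ satisfies $x_1<x_1x_3<x_2$ in the pure lex order, so two cases arise. If $x_1x_3\in W$, then by (2) one must have $x_1x_3=l_il_{i+1}$ with both factors of length one, and the forced form $l_il_{i+1}=x_jx_{j+1}$ cannot equal $x_1x_3$, contradiction. If $x_1x_3\in N$, then $l_2$ lies strictly between $l_1=x_1$ and the atom-position of $x_2$, and I would use the connectedness hypothesis (Def \ref{connect_def}) together with the rigid form $W=\{l_il_{i+1}\}$ to show that such an inserted atom creates an unfillable gap in the obstruction pattern. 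Hence $n=2$. Writing $X=\{x<y\}$, I would then prove by induction on $i$ that $l_i=xy^{i-1}$ for $1\le i\le d-1$ and $l_d=y$: Lemma \ref{MonLynLemOverlap}(\ref{MonLynLemOverlap4}) guarantees $xy^i\in L$, and Lemma \ref{MonLynLemOverlap}(\ref{MonLynLemOverlap5}) applied to the right standard factorization of $l_il_{i+1}=xy^{i-1}\cdot l_{i+1}$ excludes any alternative choice for $l_{i+1}$ lying strictly between $xy^{i-1}$ and $y$.

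The remaining implications are direct verifications. For $(3)\Rightarrow(4)$ one substitutes the explicit atoms into the formula $W=\{l_il_{i+1}\}$: for $1\le i\le d-2$, $l_il_{i+1}=xy^{i-1}\cdot xy^i$, and $l_{d-1}l_d=xy^{d-2}\cdot y=xy^{d-1}$ gives the remaining relation. For $(4)\Rightarrow(1)$, one reads off $|W|=d-1$ by inspection and uses the canonical duality of Proposition \ref{N-WProp0}(\ref{N-WProp01}) to compute that $N(W)$ is exactly the filiform list of (3), which is connected in the sense of Def \ref{connect_def}. Existence is exhibited by the explicit data in (3)/(4); uniqueness is immediate since (3) pins down $N$ and $W$ completely once the alphabet is forced to have size two.

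The main obstacle is $(2)\Rightarrow(3)$, specifically the proof that $|X|=2$. Without the connectedness assumption, the numerical datum $|W|=d-1$ could plausibly be realized by disjoint unions of smaller Lyndon pairs on disjoint alphabets, so connectedness must enter in an essential way. The subtlety is that the pure lex order interleaves Lyndon words of different lengths, so for $n\ge 3$ the Lyndon word $x_1x_3$ (and more generally $x_1x_j$ for $j\ge 3$) creates a gap between $x_1$ and $x_2$ in the atom sequence that the extremely restrictive description $W=\{l_il_{i+1}\}$, combined with connectedness, cannot accommodate.
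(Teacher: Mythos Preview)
Your cycle structure and the handling of $(1)\Rightarrow(2)$ via Proposition~\ref{N_W_Prop} are correct and match the paper. The genuine gap is in $(2)\Rightarrow(3)$, in two places.

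First, your argument for $|X|=2$ in the case $x_1x_3\in N$ is only a gesture: ``an unfillable gap in the obstruction pattern'' is not a proof, and connectedness is not actually the relevant hypothesis here. The paper's mechanism is different and cleaner. With $x<y<z$ in $X$, since $W=\{l_il_{i+1}\}$ consists only of products of \emph{consecutive} atoms and $x<y<z$ are not consecutive, $xz\notin W$; every proper Lyndon subword of $xz$ is in $N$, so $xz\in N$. Iterating gives $xz^k\in N$ for all $k\ge 1$, contradicting $|N|=d<\infty$. The working principle is: whenever $a<b<c$ in $N$ and every proper Lyndon subword of $ac$ is an atom, then $ac\in N$ (it cannot lie in $W$). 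Finiteness of $N$, not connectedness, does the work in this step.

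Second, your inductive determination of the $l_i$ once $|X|=2$ is not justified: you cite Lemma~\ref{MonLynLemOverlap}(\ref{MonLynLemOverlap5}) but never say how a statement about standard factorizations of $a^kb$ excludes, e.g., $l_{i+1}=xy^{i-1}xy^i$ rather than $xy^i$. The paper again proceeds by infinite-chain contradictions: if both $xxy,xyy\in N$ one produces $xxy(xy)^k\in N$ for all $k$; hence WLOG $xxy\in W$, and connectedness forces $xyy\in N$. If $N$ then contained a shortest atom of the form $xy^jxy^{j+1}$, one produces $xy^j(xy^{j+1})^k\in N$ for all $k$. Connectedness is used in \emph{these} steps (to guarantee $N$ has atoms of every length up to $m(N)$), not in the $|X|=2$ step as you suggest.

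A minor structural point: in your $(3)\Rightarrow(4)$ you substitute into $W=\{l_il_{i+1}\}$, which is statement~$(2)$ and hence circular in your proposed cycle. This is easily repaired by computing $W(N)$ directly from the explicit filiform $N$ via the duality of Proposition~\ref{N-WProp0}, but it should be said that way.
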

\begin{proof}
(\ref{Minim_W_Theor1a}) $\Longleftrightarrow$  (\ref{Minim_W_Theor2}). By Proposition \ref{N_W_Prop} (2) there is an inclusion of sets
\[\{l_il_{i+1}\mid 1 \leq i \leq d-1 \} \subseteq W.\] The equivalence of (1) and (2)  is straightforward.

(\ref{Minim_W_Theor2} ) $\Longrightarrow$  (\ref{Minim_W_Theor3}).
Assume $W$ has the shape given in (\ref{Weq}). We shall prove (\ref{Minim_W_Theor3}) in several steps.

We start with an useful lemma.
\begin{lemma}
\label{usefulrem}
Suppose (\ref{Weq}) holds, and $N$ contains a subchain  $a < b < c$. Then
\begin{enumerate}
\item[(a)] $ac$ is not in $W$.
\item[(b)] Furthermore, if each proper Lyndon subwords $w \sqsubset ac$
is an atom, $w \in N$, one has  $ac \in N.$
\end{enumerate}
\end{lemma}
\begin{proof}
We shall prove (a). By hypothesis the set of Lyndon atoms is $N = l_1 <l_2 < \cdots < l_{d+1},$ and by (\ref{Weq}) $W$ is the set of words $l_il_{i+1}, 1 \leq i \leq d.$  Assume now that $ac\in W$, so $ac= l_il_{i+1}$ for some $i, 1 \leq i \leq d.$
Note that $a \neq l_i,$ otherwise $c= l_{i+1},$ and $l_i=a < b < c= l_{i+1}$, with $b \in N$, is impossible. Therefore $a \neq l_i$ and either
(i) $l_i$ is a proper left segment of $a$, or (ii) $a$ is a proper left segment of $l_i$. Suppose (i) holds. Then
$l_i$ is a proper left segment of $a$ and $a$ overlaps with a proper left segment of $l_{i+1}$, that is
$\overbrace{l_i, a, l_{i+1}}$, see Notation \ref{overlapnotation}.  Now Lemma \ref{MonLynLemOverlap} part (3) implies that
\[l_i < a < l_{i+1}.\]
which is impossible, since $a \in N$. In the case (ii)  $c$ overlaps with a right segment of $l_{i}$, and $l_{i+1}$ is a proper right segment of $c$.  So $\overbrace{l_i, c, l_{i+1}}$, and Lemma \ref{MonLynLemOverlap} part (3) implies that
\[l_i < c < l_{i+1}, \]
a contradiction. It follows that $ac$ is not in $W$.
Part (b) is straightforward.
\end{proof}

\emph{Step 1.}  $|X| = 2$. Assume the contrary, then $X$ has at least three distinct elements, say
$x<y<z$. By assumption $W$ contains only products of Lyndon atoms which are "neighbors", so by Lemma \ref{usefulrem} $xz \in N$, and $N$ contains the subset
$x < xz<y< z$. Now by induction on $k$ (using analogous argument) one shows that $N$ contains an infinite subchain
\[x< xz < xz^2 <xz^3 < \cdots< x^k< \cdots <y < z,\]
which contradicts the hypothesis. By convention $X$ has at least 2 elements, so $X = \{x< y\}$.
We shall describe $N$.

The case $|N| = 3$ is trivial, one has $N = \{x < xy < y\}$. Assume $|N| \geq 4$.

\emph{Step 2.}  We prove that $N$ does not contain simultaneously $xxy$ and $xyy$.
Otherwise, $N$ contains the subchain $x < xxy<xy <xyy<y$, hence $xxy$ and $y$ are not neighbours, so $u=xxyy$ is not in $W$, and since every proper Lyndon subword of $u$ is in
$N$, Lemma \ref{usefulrem} implies $xxyy \in N$. Therefore $N$ contains $x < xxy<xxyy<xy <xyy<y$.
Lemma \ref{usefulrem} again implies $xxyxy \in N$, so
$\{x< xxy < xxyxy <xxyy< xy <xyy<y\} \subseteq N$ .
 Next one proves by induction on $k$ (and using Lemma \ref{usefulrem}) that $N$ contains an infinite set of Lyndon atoms:
$\{xxy(xy)^k, k \geq 1\} \subseteq N$,  which contradicts the hypothesis again.

From now on, without loss of generality, we may assume $xxy \in W.$ (If we assume $xyy \in W$ we shall obtain an isomorphic monomial algebra $A_W$). Since $N$ is connected, it
contains a Lyndon word of length 3,  hence $xyy \in N$. Therefore
$l_1 = x < xy =l_2 <xyy=l_3< \cdots <l_d = y$ is a subchain in $N$.

\emph{Step 3.} We shall prove that $N$ satisfies (\ref{NFiliformeq}).
Assume the contrary, then $N$ contains a Lyndon atom of the shape $xy^jxy^{k}\cdots, \; k\geq j \geq 1$.
Let $u_0$ be \emph{the shortest Lyndon atom of such shape}. By hypothesis $N$ is closed under taking Lyndon subwords, and it is easy to prove that $u_0= xy^jxy^{j+1}$, for some
$j$. It follows that
\[m(N) \geq |u_0|= 2j+3.\]

Note that $w= xy^{2j+1}\in N$, in particular $xy^{j+2}\in N$.
Indeed, by assumption $u_0 \in N$ is the shortest atom with
$u_0 \neq xy^t$. Moreover, since $u_0 \in N, |u_0|= 2j+3$ and $N$ is connected, one has
\begin{equation}
xy^{s}\in N, 1 \leq s \leq 2j+1.
\end{equation}
One has $|xy^{2j+1}|= 2j+2$, and $j+2 \leq 2j+1$.

Consider the inequalities of Lyndon atoms
\[xy^j < u_0= xy^jxy^{j+1} <xy^{j+1}<y, \]
we claim that the product  $u_1 = u_0y= xy^jxy^{j+2}$ is in $N$. Indeed, by  Lemma
\ref{usefulrem} (a) $u_1 = u_0y= xy^jxy^{j+2} $ is not in $W$.
The set of proper Lyndon subwords of $u_1$ is  $\{xy^s, 0 \leq s \leq j+2\} \cup \{u_0= xy^jxy^{j+1}\}\subset N$, and therefore  by  Lemma
\ref{usefulrem} (b), $u_1 = xy^jxy^{j+2}\in N.$
The following  sub-chain of atoms is contained in $N$:
\begin{equation}
\label{Neq1}
u_0= xy^jxy^{j+1} <   u_1 = xy^jxy^{j+2} <   \cdots  <xy^{j+1}\in N.
\end{equation}
A similar argument implies that
$u_0= xy^jxy^{j+1} < xy^j(xy^{j+1})^2   \in N$, and using induction on $k$, Lemma \ref{usefulrem},  and a similar argument as above
one proves
that $N$ contains an infinite increasing chain of Lyndon atoms:
\begin{equation}
\label{Neq2}
\begin{array}{ll}
u_0          &= xy^jxy^{j+1} < xy^j(xy^{j+1})^2 < xy^j(xy^{j+1})^3 < \cdots \\
             &< xy^j(xy^{j+1})^k < \cdots
             \in N, \; k \geq 1,
             \end{array}
\end{equation}
which contradicts $|N|= d+1.$
It follows that $N$ satisfies (\ref{NFiliformeq}).

The implications (\ref{Minim_W_Theor3} ) $\Longrightarrow$  (\ref{WFiliformeq}) and (\ref{Minim_W_Theor3} ) $\Longrightarrow$  (\ref{Minim_W_Theor2})
are clear.
 This proves the theorem.
\end{proof}

\section{Classes $\mathfrak{C}(X, W)$ containing the enveloping algebra of a Lie algebra}
\label{SectionLyndon words and  Lie algebras}
\subsection{Finitely generated graded Lie algebras and their enveloping algebras, standard Lyndon bases}
Our main references are
\cite{Lo}, \cite{Lo02} \cite{Re}, \cite{LR}, \cite{jacobson}.

In this subsection we set notation and recall some classical facts about finitely generated graded Lie algebras and their enveloping algebras
which will be used in the sequel.
As usual, $X = \{x_1, \cdots, x_n\}$,  $n \geq 2,$ is a finite set,
$K$ is a field of characteristic $0$,  $\mathfrak{L}=\Lie (X)$ denotes the free Lie $K$- algebra generated by $X$, and $\mathfrak{U}= \mathfrak{U}(\mathfrak{L})$ is the
universal enveloping algebra of $\mathfrak{L}$. It is known that $\mathfrak{U}$ and $K\asX$ are isomorphic as associative algebras, and we shall identify them.
$X^m$ denotes the set of all words of length $m \geq 1$ in $X^{*}$.
The set of all Lyndon words in $X^{+}$ is denoted by $L$, and $L_m = L\bigcap X^m$ is the set of all Lyndon words of length $m$.
The free associative algebra $\mathfrak{U}=K \asX$ is naturally graded by length.
An element $f \in \mathfrak{U}$ is called \emph{a Lie element} (or \emph{a Lie polynomial}) if $f \in \mathfrak{L}$.
It is known that if $f$ is a Lie element, then each homogeneous component of $f$ is also a Lie element.
A special case of \emph{homogeneous Lie elements} are the so called $(m-1)$-\emph{left nested Lie monomials}:
\begin{equation}
\label{Lmeq01}
f_m= [[[x_{i_1}, x_{i_2}], \cdots ], x_{i_m}], \; \; x_{i_s} \in X, \; 1 \leq s \leq m.
\end{equation}
 Clearly, the Lie monomial $f_m$ may also be considered as an element of the free associative algebra $\mathfrak{U}=K\asX$, via the
 well-known equality $[a,b]= ab-ba$, so $f_m\in \mathfrak{U}_m $.
The free Lie algebra $\mathfrak{L}= \Lie (X)$ is also naturally graded by length,
\begin{equation}
\label{Lmeq0}
\begin{array}{ll}
\mathfrak{L}&=\bigoplus_{m\geq 0} \mathfrak{L}_m, \;\;\text{where}\;\;  \mathfrak{L}_0 = K,\;  \; \mathfrak{L}_1 = KX,\\
 \mathfrak{L}_m&= \Span _K\{f_m=[[\cdots [x_{i_1}, x_{i_2}], \cdots ], x_{i_m}]\mid
 x_{i_s} \in X, 1 \leq s \leq m\}, \;   m \geq 2.
 \end{array}
\end{equation}

Throughout the paper  \emph{the right standard factorisation} $w = (u,v)$ will be called simply "\emph{the standard factorisation of} $w$".  Recall that $v$ is the longest
proper right segment of $w$ which is a Lyndon word, in this case $u$ is also a Lyndon word.
 \begin{definition} \cite{Lo}
\label{rem_rightbracket}
 Let $l \in L.$ Consider the unique \emph{(right) standard factorization} denoted $l = (u,v)_{r}$.
 The (right) standard bracketing $[l]= [l]_{r}$ on the set $L$ of all Lyndon words is defined  inductively as follows.
 \[[x]= x, \forall x \in X,\quad  [l]_{r}:= [[u]_{r}, [v]_{r}],\;\text{where}\; l \in L -X, \; l = (u,v)_{r}.\]
 The left standard bracketing $[w]_{l}$ is defined analogously, but using the left standard factorization.
\end{definition}

\begin{defconvention}
\label{defLiemonomial}
The notation $[a]$ stands for the right standard bracketing, and $[a]_l$ will be used for the left standard bracketing of $a$.
The Lie element $[u]$ is called also \emph{a Lyndon-Lie monomial} (\emph{corresponding to} $u$).
Given a nonempty set $V$ of Lyndon words, $V \subseteq L$, we denote by $[V]$ the set of standard bracketings of all elements of $V$:
\begin{equation}
\label{[V]eq}
[V] = \{[v]\mid v \in
V\},
\end{equation}
and refer to it as  \emph{"the bracketing of $V$"}.
\end{defconvention}

\begin{remark}
\label{VIPFacts_remark}
\begin{enumerate}
\item
For each Lyndon word $u$, the standard bracketing
$[u]$ considered as an element of the algebra $\mathfrak{U}= K\asX$ is a (noncommutative) polynomial
which is a linear combination
\begin{equation}
\label{Lmeq01}
[u]= u+\sum c_a a,
\end{equation}
 where
each nonzero coefficient $c_a$ is an integer,
all (distinct) monomials $a\in X^{+}$ in this presentation satisfy $a \prec u, |a|= |u|$, in particular $\overline{[u]}= u$, see \cite{Lo}, Lemma 5.3.2, or \cite{Re}, Theorem 5.1.
\item
It is not difficult to see that considered as "commutative terms" all monomials occurring in the right-hand side of (\ref{Lmeq01}) are equal, that is the monomials $a$ and $u$
have \emph{the same multi-degree} as
$u$, (see Definition-Notation \ref{multidegreedef}).
\item
 The elements
\[[L]= \{[u]\mid u\in L\}\]
form a basis of  $\Lie (X)$, see \cite{Lo}, Theorem 5.3.1, or \cite{Re}, Theorem 4.9.

In particular, for each $m \geq 1$ the set $[L_{m}]$ of \emph{Lyndon Lie monomials} of degree $m$,
\[[L_{m}]= \{[u]\mid u \in L_{m}\}\]  is a $K$-basis of the graded component
$\mathfrak{L}_{m}$ (considered as a vector space).
\item
 On the other side, for each $m \geq 2$ the graded component $\mathfrak{L}_{m}$ is spanned by all left-nested Lie monomials
 $[[[x_{i_1}, x_{i_2}], \cdots ], x_{i_m}]$, $x_{i_j}\in X$, hence there are equalities of sets
 \begin{equation}
\label{Lmeq1}
\Span_K \{[u]\mid u \in L_{m}\} = \mathfrak{L}_{m} =
\Span_K \{f_m=[[\cdots[[x_{i_1}, x_{i_2}], \cdots ], x_{i_{m-1}}],x_{i_{m}}]\}.
\end{equation}
\end{enumerate}
\end{remark}

The works \cite{LR}and \cite{GIF} are central for this subsection.
These two works use different terminology for equivalent notions and, for convenience, we adapt the terminology from \cite{LR}
to our equivalent terminology given in \cite{GIF}, and typical for the theory of non-commutative Gr\"{o}bner bases. The equivalence is
given in Remark \ref{FactsLaRam}.
The notion of  "\emph{an atom}" was introduced by Anick, \cite{Anick85} in a more general context, the special properties of
 "the atoms" in the sense of Anick are listed in \cite{Anick85}, Theorem 5. It is easy to see that if $(N,W)$ is a Lyndon pair,
 then the set of Lyndon words $N$ has the properties analogous to the properties of a set of atoms, in the sense of Anick.
 In fact $N$ coincides with the set of atoms for the monomial algebra $A_W$,
 see also our comments in \cite{GIF}.
Recall that, by convention, all Gr\"{o}bner bases of  (associative) ideals $I$ in $K
\asX $ and all Lyndon-Shyrshov bases of  Lie ideals $J$ in
$\Lie(X)$ are considered with respect to "$\prec$" -the degree-
lexicographic well-ordering on $X^{\ast}$.

\begin{notconvention}
\label{notconventionB}
From now on $J$ will denote a Lie ideal in $\mathfrak{L}=\Lie (X)$ \emph{generated by homogeneous Lie elements},(see the natural grading   of  $\Lie (X)$, (\ref{Lmeq0}).  $I$ will denote the
two-sided ideal in $K \asX$ generated by $J$, where $J$ is considered as a set of
associative polynomials, $I =(J_{ass})$. Let
\begin{equation}
 \label{liealgeq1}
\mathfrak{g} = \Lie(X)/J, \quad U= U\mathfrak{g}= K \asX/ I .
\end{equation}
It is known that $U$ is the enveloping algebra of the Lie algebra
$\mathfrak{g}$, see for example \cite{LR}, p. 1823. The algebras $\mathfrak{g}$ and $U$ are naturally graded.
$W= W(I)$
denotes the corresponding set of obstructions for the associative algebra $U= U\mathfrak{g}$.
As usual, $\mathfrak{N} = \mathfrak{N}(I)= \mathfrak{N}(W)$ denotes the set of normal words modulo $I$
(w.r.t. $\prec$).
$N = N(I) = N(W)$, is the set of all Lyndon words which are normal mod $I$, we may also write $N=N(J)$.
By definition $N = \mathfrak{N} \bigcap L$, where $L$ is the set of Lyndon words.
We shall prove that the obstruction set $W$ is \emph{an antichain of Lyndon words}, see Proposition \ref{UgProp} (1), so, by convention, we call $N = N(I)= N(W)$
\emph{the set of Lyndon atoms}  \emph{for} $U$. In this case $(N, W)$ is \emph{a Lyndon pair}.
\end{notconvention}

The remark given below provides an useful interpretation of the terminology of  \cite{LR}  in terms of our equivalent notions introduced in this paper and in \cite{GIF}.
Note that our
terminology is introduced  in more general settings not necessarily in the context in Lie algebras, and independently.
We use results from \cite{GIF} and \cite{LR}, see Theorem (2.1), Corollaries (2.5) and  (2.8)
to deduce straightforwardly the following.

\begin{remark}
\label{FactsLaRam}
In notation and assumption as above, $\mathfrak{g} = \Lie (X)/J$, $U= U\mathfrak{g}= K \asX/ I$, and $W$ is the set of obstructions for $U$.
(1) A word $a\in X^{*}$ is \emph{standard} modulo $I$ in the sense of \cite{LR}   \emph{iff} $a$ is \emph{normal} modulo $I$,
that is no $f \in I$ satisfies $\overline{f}\sqsubseteq a$ (equivalently, no $w\sqsubseteq a$, for some $w \in W$).
Moreover, the set $\mathfrak{N}(I)$ of normal (mod $I$) monomials  coincides with the set of "\emph{standard mod} $I$ \emph{monomials}" in the sense of \cite{LR}.
(2) The set
$N$ of Lyndon atoms, where
$N =\mathfrak{N}(I) \bigcap L$ coincides with the set of "\emph{Lie-standard Lyndon words}", in the sense of \cite{LR}.
Moreover, the set $[N]\subset \mathfrak{g}$ is a $K$-basis of $\mathfrak{g}$.
(3) The enveloping algebra $U = U\mathfrak{g}$ has a $K$-basis of "Poincar\'{e}-Birkhoff-Witt" type expressed in
terms of its Lyndon atoms via (\ref{normalbasiseq_inf}).
This basis coincides with the set of normal words $\mathfrak{N}$ (modulo $I$).
\end{remark}

\subsection{Basic information on classes $\mathfrak{C}(X, W)$ containing the enveloping algebra $U$ of a graded Lie algebra}
\begin{proposition}
\label{UgProp}
Suppose $\mathfrak{g}$ is a graded Lie algebra, $\mathfrak{g} = \Lie(X)/J$, $U= U\mathfrak{g}= K\asX/ I$ is its enveloping algebra.
Let $W$ be the set of obstructions for $U$,
as usual, $\mathfrak{N} = \mathfrak{N}(I)$ denotes the set of normal monomial, and $N=N(W)= \mathfrak{N}\bigcap L$ is the set of normal Lyndon words.
The following conditions hold.
\begin{enumerate}
 \item
 \label{UgProp1}  $W$ is an antichain of Lyndon words, so $(N,W)$ is a Lyndon pair, and
 $A_W = K\asX/ (W)$
 is \emph{a monomial algebra defined by Lyndon words}, in the sense of \cite{GIF}.
 \item
 \label{UgProp3}
 Assume furthermore, that $J$ is generated by homogeneous Lie elements,
 so $\mathfrak{g}$  and the enveloping algebra $U$ are canonically graded.
 Then the algebras $U$, and $A_W$ are in the class $\mathfrak{C}(X, W)$.
The following conditions are equivalent.
 \begin{enumerate}
 \item
 \label{UgProp31}
 $U$ is an Artin-Schelter regular algebra.
 \item
 \label{UgProp32}
 $U$ has polynomial growth.
 \item
 \label{UgProp33}
 The Lie algebra $\mathfrak{g}$ is finite dimensional.
 \item
 \label{UgProp34}
 The set of Lyndon atoms $N$ is finite.
 \end{enumerate}
 Each of these equivalent conditions implies that
 \[\gldim (U) = \gkdim (U)  = \dim _K \mathfrak{g} = |N|=d.\]
 Moreover,
 $U$ is a standard finitely presented algebra and
 \begin{equation}
\label{orderW}
 d-1 \leq |W| \leq d(d-1)/2,\; \;\text{where}\; \; d = |N|.
 \end{equation}
 \end{enumerate}
\end{proposition}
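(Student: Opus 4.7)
For part (1), the plan is to reduce the claim to the duality encoded in Lemma \ref{VIPlemma1}. Since $J$ is a Lie ideal and $I=(J_{ass})$, the Poincar\'{e}-Birkhoff-Witt theorem for $U\mathfrak{g}$, combined with the fact that the Lyndon-Lie monomials $[l]$ ($l\in N$) form a basis of $\mathfrak{g}$ (Remark \ref{FactsLaRam}(2)), gives that the enveloping algebra $U=U\mathfrak{g}$ has a $K$-basis consisting of non-increasing products of Lyndon atoms $l_1^{k_1}l_2^{k_2}\cdots l_s^{k_s}$ with $l_i\in N$ and $l_1>l_2>\cdots>l_s$. Using that $\overline{[l]}=l$ (Remark \ref{VIPFacts_remark}(1)), one verifies that this PBW basis is precisely the set $\mathfrak{N}(I)$ of normal words modulo $I$. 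Hence $\mathfrak{N}$ has the shape \eqref{normalbasiseq_inf}, and Lemma \ref{VIPlemma1} forces the obstruction set $W$ to be an antichain of Lyndon words. The assertion that $A_W$ is a monomial algebra defined by Lyndon words is then just the definition from \cite{GIF}.

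For part (2), once $J$ is homogeneous the ideal $I$ is homogeneous too, so $U$ and $A_W$ are both graded members of $\mathfrak{C}(X,W)$ sharing the same normal basis $\mathfrak{N}$ and Hilbert series. The implication (\ref{UgProp33})$\Leftrightarrow$(\ref{UgProp34}) is immediate from $\dim_K\mathfrak{g}=|[N]|=|N|$. For (\ref{UgProp34})$\Rightarrow$(\ref{UgProp32}), if $|N|=d<\infty$, then the presentation \eqref{normalbasiseq_inf} yields
\[
H_U(t)=\prod_{l\in N}\frac{1}{1-t^{|l|}},
\]
so $U$ has polynomial growth of degree $d$. The converse (\ref{UgProp32})$\Rightarrow$(\ref{UgProp34}) follows because an infinite $N$ would produce, for every $m$, at least $|N\cap X^{\le m}|$ independent Lyndon atoms contributing independent basis elements, breaking any polynomial bound on $\dim U_m$. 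The implication (\ref{UgProp31})$\Rightarrow$(\ref{UgProp32}) is part of the definition of AS regularity. Finally, for (\ref{UgProp33})$\Rightarrow$(\ref{UgProp31}), I invoke the classical fact that the enveloping algebra of a finite dimensional graded Lie algebra is AS regular (Gorenstein plus finite global dimension plus polynomial growth via PBW).

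To conclude the numerical part, I apply Theorem II to the Lyndon pair $(N,W)$ with $|N|=d$: part (\ref{theorem2_2}) of Theorem II yields $\gldim U=d=\gkdim U$, part (\ref{theorem2_c}) yields that $U$ is standard finitely presented with $d-1\le|W|\le d(d-1)/2$, and the identification $\dim_K\mathfrak{g}=|N|=d$ has already been established. The only step demanding care is the verification that the PBW basis of $U\mathfrak{g}$ coincides with the set of normal words $\mathfrak{N}(I)$ with respect to the degree-lexicographic order; the essential point is that in each equality $\overline{[l]}=l$ the monomials subtracted to form $[l]$ lie strictly below $l$ in $\prec$, so standard Gr\"obner-basis reasoning identifies the leading monomials of $I$ with products of Lyndon atoms that are \emph{not} non-increasing, and hence identifies their minimal elements (the obstruction set) with a subset of $L$.
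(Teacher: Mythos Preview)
Your proof is correct and follows essentially the same route as the paper: for part (1) you use the PBW-type basis from Remark \ref{FactsLaRam} together with Lemma \ref{VIPlemma1}, exactly as the paper does; for part (2) you establish the same chain of equivalences via $\dim_K\mathfrak{g}=|N|$, the definition of AS regularity, and the classical fact (the paper cites \cite{FlVa}) that enveloping algebras of finite-dimensional graded Lie algebras are AS regular. The only cosmetic difference is that for the numerical conclusions you invoke Theorem~II of the present paper, whereas the paper's own proof cites directly the underlying results from \cite{GIF} (Theorems A and B) and \cite{GI89}; since Theorem~II is precisely the consolidation of those results and is proved earlier in Section~\ref{More results}, your citation is logically sound and amounts to the same argument.
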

\begin{proof}
(\ref{UgProp1}).
The algebra $U$ has a $K$-basis of "Poincar\'{e}-Birkhoff-Witt" type  expressed in
terms of its Lyndon atoms via (\ref{normalbasiseq_inf}), see Remark \ref{FactsLaRam}. Hence by Lemma \ref{VIPlemma1} the set of obstruction $W$ is an antichain of Lyndon words. Clearly, then $(N,W)$ is a Lyndon pair.
Moreover, $A_W = K\asX/(W)$ is a monomial algebra defined by Lyndon words and all results of \cite{GIF} are applicable.

(\ref{UgProp3}).
The algebras $A_W$ and  $U$  belong to $\mathfrak{C} (X, W)$,
they have
 the same $K$-basis, $\mathfrak{N} =\mathfrak{N}(W)$,  and the same set of Lyndon atoms $N= N(W)= \mathfrak{N}\bigcap L$, in particular,
 $\gkdim U\mathfrak{g} =\gkdim A_W$.
The equivalence below follows from \cite{GIF}, Theorem A (2).
\begin{equation}
\label{eq_implications11}
\gkdim (A_W)= d \Longleftrightarrow |N|= d.
\end{equation}
Moreover, \cite{GIF}, Theorem B (2) gives the following relations
 \begin{equation}
\label{eq_implications12}
\gkdim (A_W)= d \Longrightarrow \gldim (A_W) = d,  \; \quad\; d-1 \leq |W|\leq d(d-1)/2.
\end{equation}
Now
\cite{LR}, Theorem (2.1) implies straightforwardly
that the set $[N]= \{[u]\mid u \in N\}$ is a $K$-basis of the Lie algebra $\mathfrak{g}$, there is an isomorphism of vector spaces
$\mathfrak{g}  \cong \Span_{K} [N]$. Thus
$\mathfrak{g}$ is finite dimensional, if and only if the set of atoms $N$ is finite, in this case
$\dim \mathfrak{g} = |N|.$
Assume now that $U$ has polynomial growth of degree $d$, so \[\gkdim (U)= d = \gkdim (A_W).\]
It follows from (\ref{eq_implications11}) that $|N|= d$, hence (\ref{UgProp32})$\Longleftrightarrow$ (\ref{UgProp34}),
and (\ref{UgProp32}) $\Longleftrightarrow$  (\ref{UgProp33}).
By (\ref{eq_implications12})  $\gldim (A_W)= d$, and by \cite{GI89}, Theorem II,
 $\gldim(U) = \gldim (A_W)= d.$
The obstruction set $W$ is finite (by (\ref{eq_implications12}) again) therefore $U$ is standard finitely presented.

Suppose now $U$ is Artin-Schelter regular, then by definition it has polynomial growth of degree, say $d$,
so  (\ref{UgProp31})$\Longrightarrow$ (\ref{UgProp32}), and therefore the remaining conditions (c) and (d) are also in force.
The discussion above implies $\gldim U= d = \gkdim U = \dim \mathfrak{g} = |N|$.

Conversely, assume that $\mathfrak{g}$ is finite dimensional.
It is known by the experts that the enveloping algebra $U=U\mathfrak{g}$ of a finite dimensional positively graded
Lie algebra $\mathfrak{g}$ is Artin-Schelter regular. A reference to the original result is difficult to find, and
we refer to the proof provided by Fl{\o}ystad and  Vatne, see \cite{FlVa}, Theorem 2.1.
\end{proof}

We have seen that the enveloping algebra $U\mathfrak{g}$ of any Lie algebra $\mathfrak{g}$
generated by $X$ has a set of obstructions $W$, consisting of Lyndon words, in general, $W$ may be infinite.
Moreover, if $\mathfrak{g}$ has finite dimension $\dim \mathfrak{g}= d$,
then the enveloping algebra
$U=U\mathfrak{g}$ is a standard finitely presented Artin-Schelter regular algebra of global dimension $d$.
The number $r= |W|$ of its relations satisfies the inequalities (\ref{orderW}).
The following question arises naturally.

\begin{question}
Let $(N, W)$ be a Lyndon pair, where $N$ is a finite set.
Under  what combinatorial conditions
is  $W$ the set of obstructions for the enveloping $U\mathfrak{g}$ of some
Lie algebra $\mathfrak{g}$?
\end{question}
Proposition \ref{VIPproposition} gives a necessary condition in terms of $N=N(W)$, the set of Lyndon atoms.
In section \ref{Sec.MonLiealgebras}
we introduce and study
\emph{the monomial Lie algebras} $\mathfrak{g} = Lie(X)/([W])$. In this case the enveloping algebra $U\mathfrak{g}$ belongs to $\mathfrak{C}(X, W)$ if and only if $[W]$ is a Gr\"{o}bner-Shirshov basis of the Lie
ideal $J =([W])_{Lie}$ of $\Lie(X)$.

\subsection{If the class $\mathfrak{C}(X, W)$ contains the enveloping algebra $U$ of a graded Lie algebra then the set $N= N(W)$ is connected}
\begin{remark}
\label{remarkEqualityof Classes}
 Due to the duality $W = W(N)$, $N = N(W(N))$, see Proposition \ref{N-WProp0},
  each class of associative graded algebras $\mathfrak{C}(X, W)$, with obstruction set $W$ consisting of Lyndon words is also uniquely determined
  by the set of Lyndon atoms $N=N(W)$.
  Conversely, each set $N$ of Lyndon words closed under taking Lyndon subwords, and with $X \subseteq N$,
  determines uniquely a class $\mathfrak{C}(X, W)$, where $W=W(N)$, and $(N,W)$ is a Lyndon pair.
  It is interesting to classify  all Lyndon pairs $(N, W)$ such that
  the monomial algebra $A_W$ has polynomial growth and
 $\gldim A_W = d$,  where $d\geq 1$ is fixed.
  We use our result $\gldim A_W= \gkdim A_W=|N|= d$ (in this case $W$ is always finite).
  Our method consists of two steps:
  \begin{enumerate}
  \item Classify the sets of Lyndon atoms $N$ of a fixed finite order  $|N|= d$, $N$ must satisfy \textbf{C1, C2}.
  \item  Write down the corresponding set of obstructions $W= W(N)$, so that $(N, W)$ is a Lyndon pair.
   \end{enumerate}
   \emph{In this case (whenever we start with $N$) we shall often use notation $\mathfrak{C}(X, N)$ for the class of associative graded algebras with set of obstructions $W = W(N)$}.
   A Lyndon pair $(N, W)$ is uniquely determined by each of its component $N$, or $W$, and by convention we shall use both notation
 \begin{equation}
\label{C(X,N)eq}
\mathfrak{C}(X, N): = \mathfrak{C}(X, W),\; \text{where $(N,W)$ is a Lyndon pair}.
\end{equation}
  \end{remark}
  Till the end of the paper we shall consider only the special case when \emph{the ideal $I =(J_{ass})$ of $K \asX$ is graded by length, respectively, multi-graded,}
thus  the enveloping algebra $U= K \asX/ I$  is also graded, respectively, multi-graded. Moreover, $U$ has a set of obstructions $W$ consisting of Lyndon
words, and $U \in \mathfrak{C}(X, W)$.

Note that every Lyndon word $u$ of  length $|u|\geq 3$ in a finite alphabet $X$, contains a Lyndon subword of length $2$.
Moreover, if $N$ is a finite set of Lyndon monomials
in $X = \{x,y\}$ with  \textbf{C1, C2}, and $N$ contains a word of length $\geq 4,$ then $N\bigcap L_3 \neq \emptyset$.
\begin{definition}
\label{connect_def}
Suppose $N$ is a set of Lyndon words (possibly infinite) with conditions \textbf{C1} and \textbf{C2}.
(i.e. $X \subseteq N$, and $N$ is closed under
taking Lyndon subwords).
\begin{enumerate}
\item
We say that $N$ is \emph{connected} if
\begin{equation}
\label{connectedeq}
[N\bigcap L_m \neq \emptyset,\; m \geq 2] \quad\text{implies} \quad[N\bigcap L_s \neq \emptyset, \;\forall s, 1\leq s \leq m].
\end{equation}
In particular, if $N=  \{l_1<
l_2 < l_3\cdots < l_d \}$ is a finite set,
then $N$ is \emph{connected} \emph{iff} (\ref{connectedeq}) is in force
for  $m = \max \{|l_i| \mid 1 \leq i \leq d \}$, and all $s, 1 \leq s \leq m$.
\item
Suppose $N$ is not connected, that is $N \bigcap L_j = \emptyset, N \bigcap L_{j+1} \neq \emptyset$ for some $j$.
Then \emph{the connected component of $N$}, denoted by  $N_{con}$ is defined as
\begin{equation}
\label{Nconeq}
\begin{array}{l}
N_{con}: = \bigcup_{1 \leq s\leq k} (N \bigcap L_s),\;  \text{where}\; N\bigcap L_s  \neq \emptyset, \forall s, 1 \leq s\leq k,\\
  \text{and}\; N\bigcap L_{k+1} = \emptyset.
\end{array}
\end{equation}
For completeness we set $N_{con}= N$, whenever $N$ is connected.
Note that $N_{con}$  is the maximal \emph{connected subset} of $N$, which satisfies \textbf{C1} and \textbf{C2}.
\end{enumerate}
\end{definition}
One can find explicit examples in the last section, where we have
classified the Lyndon pairs $(N,W)$ in the alphabet $X = \{x < y\}$, with $|N|= 7$, see Subsec. \ref{list7}.
Each of the first 12 pairs  $(N,W)$, (7.4.1) through (7.6.12), has connected set of atoms $N$. Each of the remaining pairs has a disconnected set of Lyndon atoms,
but we have identified explicitly the connected components $N_{con}$.
We illustrate this with the Lyndon pair $(N,W)$ given in  (7.7.15). The set of Lyndon atoms,
 $N=\{x < xy < xy^2 <xy^2xy^3< xy^3 <xy^4 < y\}$ is disconnected.  One has $N\bigcap L_7 = \{u =xy^2xy^3\}$, and $N\bigcap L_6 = \emptyset$.
 In this case  $N$ has a connected component $N_{con}= N\setminus \{xy^2xy^3\} = N(\Lcal_5)$, the set of atoms corresponding to the Filiform Lie algebra $\Lcal_5$ of dimension $6$
 and nilpotency class of degree $5$.

Suppose $N$ is finite. Clearly, the property \emph{"$N$ is connected"} is recognizable.
Moreover, this is a necessary condition so that "$\mathfrak{C}(X, W) = \mathfrak{C}(X, N)$
contains the enveloping algebra $U=U\mathfrak{g}$ of some Lie algebra $\mathfrak{g}$", as shows Proposition \ref{VIPproposition}.

\begin{definition}
\label{nilpdef}
Recall that
$\mathfrak{g}$ is \emph{a nilpotent Lie algebra of nilpotency class}
$m$ if $m$ is the minimal value for which $\mathfrak{g}^{m+1} =0$, where
\[\mathfrak{g}^1 =\mathfrak{g},  \;\text{and}  \; \mathfrak{g}^t = [\mathfrak{g}^{t-1}, \mathfrak{g}],\; t\geq 2\]
is the lower central series of $\mathfrak{g}$.
In this case we shall also write $\mathfrak{g} \in \mathfrak{N}_m$.
\end{definition}

\begin{proposition}
\label{VIPproposition}
 Suppose  $(N,W)$ is a Lyndon pair, such that the class
 $\mathfrak{C}(X, W)$
contains the enveloping algebra $U=U\mathfrak{g}$ of a graded Lie algebra $\mathfrak{g}$. Then the following conditions hold.
\begin{enumerate}

 \item
\label{VIPproposition1}
If $N\bigcap L_s = \emptyset$, for some $s \geq 2$ then $N\bigcap L_{s+k} = \emptyset$, for all $k \geq 1$. In particular, $N$ is finite.
Moreover, if $s$ is the minimal positive integer with this property, then $\mathfrak{g}$ is a nilpotent Lie algebra of nilpotency class $s-1$.
\item
\label{VIPproposition2}
If $N$ is an infinite set, then $N\bigcap L_m \neq \emptyset$ for all $m \geq 1$, so $N$ is connected.
\item
\label{VIPproposition4}
Suppose $N$ is a finite set, $|N|=d$ and let $m = \max \{|u|\mid u \in N\}$. Then the Lie algebra $\mathfrak{g}$
 is nilpotent of nilpotency class $m \leq d -|X|+1.$
\item
\label{VIPproposition3}
 The set $N$ of Lyndon atoms is connected, and $[N]$ is a
 $K$-basis for $\mathfrak{g}$ .
 \end{enumerate}
 \end{proposition}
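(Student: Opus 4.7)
The plan is to establish part (1) first using the length grading on $\mathfrak{g}$, and then derive parts (2), (3) and (4) as almost immediate consequences of (1) together with Remark \ref{FactsLaRam}.

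First I would observe that since $U = U\mathfrak{g}$ lies in $\mathfrak{C}(X,W)$, the enveloping algebra is graded by length, which forces $I = (J_{ass})$, and hence the Lie ideal $J$, to be generated by homogeneous Lie polynomials. Consequently $\mathfrak{g}$ inherits the length grading $\mathfrak{g} = \bigoplus_{m \geq 1}\mathfrak{g}_m$ from $\mathfrak{L} = \Lie(X)$. By Remark \ref{FactsLaRam}(2) the bracketed atoms $[N]$ form a $K$-basis of $\mathfrak{g}$, and since $[u] \in \mathfrak{g}_{|u|}$ for every $u \in N$, this gives the homogeneous refinement $\dim_K \mathfrak{g}_m = |N \cap L_m|$ for every $m \geq 1$. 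This already disposes of the basis statement in part (4).

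For part (1), the decisive input is the spanning identity (\ref{Lmeq1}) for the free Lie algebra, which passes to the quotient as $\mathfrak{g}_m = [\mathfrak{g}_{m-1}, \mathfrak{g}_1]$ for every $m \geq 2$. Hence $\mathfrak{g}_{m-1} = 0$ implies $\mathfrak{g}_m = 0$, and a trivial induction yields: if $N \cap L_s = \emptyset$ then $N \cap L_{s+k} = \emptyset$ for all $k \geq 0$. Finiteness of $N$ is then immediate from $N \subseteq \bigcup_{k=1}^{s-1} L_k$. For the nilpotency assertion I would verify by induction on $k$ that $\mathfrak{g}_k \subseteq \mathfrak{g}^k$ (using $\mathfrak{g}_{k+1} = [\mathfrak{g}_1, \mathfrak{g}_k] \subseteq [\mathfrak{g}, \mathfrak{g}^k] = \mathfrak{g}^{k+1}$), which together with the obvious reverse inclusion gives $\mathfrak{g}^m = \bigoplus_{k \geq m}\mathfrak{g}_k$. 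Taking $s$ minimal with $N \cap L_s = \emptyset$ then delivers $\mathfrak{g}^s = 0$ while $\mathfrak{g}^{s-1} \supseteq \mathfrak{g}_{s-1} \neq 0$, so the nilpotency class is exactly $s-1$.

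The remaining parts follow mechanically. Part (2) is the contrapositive of (1): if $N$ is infinite, no $s \geq 2$ can satisfy $N \cap L_s = \emptyset$, so $N \cap L_m \neq \emptyset$ for all $m$ and $N$ is connected. The connectedness half of (4) is the same observation: $N \cap L_1 = X \neq \emptyset$, and whenever $N \cap L_m \neq \emptyset$ with $m \geq 2$, part (1) prevents any intermediate $2 \leq s \leq m$ with $N \cap L_s = \emptyset$. For part (3), the maximality of $m$ gives $N \cap L_{m+1} = \emptyset$; applying (1) with $s = m+1$ produces nilpotency class exactly $m$, and combining with the connectedness from (4) yields $d = |N| = \sum_{k=1}^{m}|N \cap L_k| \geq |X| + (m-1)$, i.e. $m \leq d - |X| + 1$.

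The main technical point to get right is not the Lie-theoretic computation itself but rather the alignment of three structures on $\mathfrak{g}$: the length grading (produced by the homogeneous presentation), the lower central series (needed for the nilpotency class), and the Lyndon basis $[N]$ (supplying the degree counts). Once one has $\mathfrak{g}_m = [\mathfrak{g}_{m-1}, \mathfrak{g}_1]$ and $\mathfrak{g}^m = \bigoplus_{k \geq m}\mathfrak{g}_k$, every part of the proposition follows from the single one-step induction $\mathfrak{g}_{m-1} = 0 \Rightarrow \mathfrak{g}_m = 0$.
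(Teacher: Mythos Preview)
Your proof is correct and follows essentially the same route as the paper: the key step in both is that $N\cap L_s=\emptyset$ forces the degree-$s$ graded piece to vanish (the paper phrases this as $\mathfrak{L}_s\subset J$, you as $\mathfrak{g}_s=0$), after which nilpotency and the remaining parts follow. Your formulation is in fact slightly more careful, since you make explicit the identity $\mathfrak{g}^m=\bigoplus_{k\geq m}\mathfrak{g}_k$ needed to pin down the nilpotency class exactly, whereas the paper leaves the inclusion $\mathfrak{g}_{s-1}\subseteq\mathfrak{g}^{s-1}$ implicit.
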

\begin{proof}
 It follows from the hypothesis
that there exists a Lie algebra $\mathfrak{g} = Lie(X) /J$, where $J$ is a (graded) Lie ideal whose enveloping $U=U\mathfrak{g} = K\asX /I$, has set of obstructions $W$ ($I$ is
the associative two-sided ideal  generated by $J$). In fact $U$ determines uniquely the Lyndon pair $(N,W)$, given in the hypothesis.

(\ref{VIPproposition1}) Assume $N\bigcap L_s = \emptyset$, for some integer $s \geq 2$. It follows from the equalities (\ref{Lmeq1}) that there is an inclusion of subspaces
\[
\mathfrak{L}_{s} = \Span_K \{[[[[x_{i_1}, x_{i_2}], x_{i_3}] \cdots ], x_{i_s}]\} \subset J.
\]
This implies that for $q \geq s$ every nested Lie monomial $w_q= [[[[x_{i_1}, x_{i_2}], \cdots x_{i_s}], \cdots ], x_{i_q}] \in J,$ i.e. $\mathfrak{g}$ is a nilpotent Lie
algebra of nilpotency class $\leq s-1$.  Suppose that $s$ is the minimal integer with $N_s = \emptyset$. Then
$\mathfrak{g}^{s-1} \neq 0$, and $\mathfrak{g}^{s} = [[\mathfrak{g}^{s-1},\mathfrak{g} ]= 0$,
hence $\mathfrak{g}$ is a nilpotent Lie algebra of nilpotency class $s-1$, see Definition \ref{nilpdef}.

Part (\ref{VIPproposition1}) implies straightforwardly part (\ref{VIPproposition2}).

(\ref{VIPproposition3}).
Suppose that $N$ is a finite set. It follows from Definition
\ref{connect_def}
and part (\ref{VIPproposition1}) that $N$ is connected. By part (\ref{VIPproposition2}) $N$ is connected, whenever it is an infinite set.

(\ref{VIPproposition4}) Assume that $N$ has finite order $|N|=d$ and $m = \max \{|u|\mid u \in N\}$. Without loss of generality we may assume $m \geq 2.$
The set $N$ is connected, hence $N_s =N \bigcap L_s \neq \emptyset, 1 \leq s\leq m$. The equality $N = \bigcup_{1 \leq s \leq m} N_s$ implies
\[d = |N|= |X| + \sum_{2 \leq s \leq m} |N_s| \geq |X| + m-1, \] therefore $m \leq d +1 - |X|.$
It follows from part (\ref{VIPproposition1}) that $\mathfrak{g}$ is a nilpotent Lie algebra of nilpotency class $m \leq d -|X|+1$.
\end{proof}

\begin{corollary}
\label{NdisconnCor}
Let $(N,W)$ be a Lyndon pair, where $N$ is finite. Suppose $N$ is not a connected set of Lyndon words. Then (i) the class $\mathfrak{C}(X, W)$
does not contain the enveloping algebra $U=U\mathfrak{g}$ of a graded Lie algebra $\mathfrak{g}$. (ii) In particular,   $[W]$
is not a Lyndon-Shirshov basis of the Lie ideal $J=([W])_{Lie}$ of $\Lie(X)$.
\end{corollary}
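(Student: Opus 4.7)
The plan is to derive both statements as immediate contrapositives of Proposition \ref{VIPproposition}, which establishes that whenever the class $\mathfrak{C}(X,W)$ contains the enveloping algebra of some Lie algebra $\mathfrak{g}$, the set of Lyndon atoms $N = N(W)$ is necessarily connected. Since the hypothesis of the corollary is precisely the negation of this conclusion, little new work is required beyond correctly packaging the implication.

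For part (i), I would argue by contradiction. Suppose there existed a Lie algebra $\mathfrak{g}$ with $U\mathfrak{g} \in \mathfrak{C}(X,W)$. Then Proposition \ref{VIPproposition}(\ref{VIPproposition3}) (applied to the finite set $N$ and combined with part (\ref{VIPproposition1}), which rules out gaps in the length spectrum of $N$) forces $N$ to be connected. This contradicts the standing assumption that $N$ is disconnected, so no such enveloping algebra lies in $\mathfrak{C}(X,W)$.

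For part (ii), I would invoke the equivalence, stated in the paragraph immediately following Proposition \ref{VIPproposition} and proved in Section \ref{Sec.MonLiealgebras}: for the monomial Lie algebra $\mathfrak{g} = \Lie(X)/([W])_{Lie}$, the enveloping algebra $U\mathfrak{g}$ lies in $\mathfrak{C}(X,W)$ if and only if $[W]$ is a Gr\"obner--Shirshov basis of the Lie ideal $J = ([W])_{Lie}$. If $[W]$ were such a basis, then $U\mathfrak{g} \in \mathfrak{C}(X,W)$, in direct contradiction with part (i). Hence $[W]$ cannot be a Gr\"obner--Shirshov basis of $J$.

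There is essentially no technical obstacle, as the substantive combinatorial argument has been absorbed into Proposition \ref{VIPproposition}. The only point requiring care is a forward reference: the iff-characterization needed in (ii) is only fully proved in Section \ref{Sec.MonLiealgebras}, so I would either cite it explicitly as a forward reference here or, alternatively, restrict the statement of (ii) to "$[W]$ is not a Gr\"obner--Shirshov basis" while deferring the full converse to the later section; both options preserve the logic of the paper.
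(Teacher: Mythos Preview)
Your proposal is correct and matches the paper's intent: the corollary is stated there with no proof, as an immediate contrapositive of Proposition~\ref{VIPproposition}(\ref{VIPproposition3}), with part~(ii) following because a Gr\"obner--Shirshov basis $[W]$ would force $U\mathfrak{g}\in\mathfrak{C}(X,W)$ (via Definition~\ref{GS-basis_def} or Proposition~\ref{theor_easy_vip}). Your care about the forward reference is appropriate; note that for~(ii) you only need the easy implication ``$[W]$ is a GS basis $\Rightarrow$ $W$ is the obstruction set of $U\mathfrak{g}$'', which is essentially built into the definition and does not require the full equivalence of Proposition~\ref{theor_easy_vip}.
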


\section{Monomial Lie algebras defined by Lyndon words}
\label{Sec.MonLiealgebras}
As usual, $X = \{x_1, \cdots , x_n\}$ is a fnite set and $W$ denotes an antichain of Lyndon words in $X^{+}$.
Our goal is to find explicitly (in terms of generators and relations) classes $\mathfrak{C}(X, W)$ containing enveloping algebras $U\mathfrak{g}$ of prescribed
global dimension. As a first step in this direction we introduce and study the \emph{monomial Lie algebras defined by Lyndon words}.
\subsection{Basic definitions and first properties of Monomial Lie algebras}
\begin{definition}
\label{monLiedef}
We say that  $\mathfrak{g} = \Lie(X)/J$  is \emph{a monomial Lie algebra} \emph{defined by Lyndon words}, or shortly, \emph{a monomial Lie algebra},
if $J$ is a Lie ideal
generated by the Lie elements $[W]= \{[w]  \mid w \in W \}$, where $W$ is an antichain of Lyndon words.
If moreover, the set of Lie monomials $[W]$ is a Gr\"{o}bner-Shirshov  basis of the Lie ideal $J = ([W])_{Lie})$ (see Definition \ref{GS-basis_def})
we shall refer to $\mathfrak{g}$ as \emph{a standard monomial Lie algebra} and denote it by
 $\mathfrak{g}_{W}$.
 \end{definition}
Definition \ref{GS-basis_def} is one of the several equivalent definitions of a Gr\"{o}bner-Shirshov basis of a Lie ideal in $\Lie(X)$ (or shortly a GS basis).
Proposition \ref{theor_easy_vip} gives several equivalent conditions each of which can be used for a definition. For now we shall use the fact that
 $[W]$ is a Gr\"{o}bner-Shirshov  basis of the Lie ideal $J$ \emph{iff} the set  $[W]_{ass}$
considered as "associative" elements of $K \asX$
is a \emph{Gr\"{o}bner  basis of the associative ideal} $I = (J_{ass})$, w.r.t the ordering $\prec$ on $X^{\ast}$, see Proposition \ref{theor_easy_vip}.
Our first examples are classical.
\begin{example}
\begin{enumerate}
\item
The free-nilpotent Lie algebra $\mathfrak{L}(m+1)$,  $m \geq 2$,  generated by $X$, is a standard monomial Lie algebra
defined by Lyndon words, see Corollary \ref{freeNilp_cor}.
\item The standard Filiform Lie algebra $\mathcal{L}_m$, of dimension $m+1$ and  nilpotency class $m$, see Subsec \ref{MonFiliformSec}.
\end{enumerate}
\end{example}

Each monomial Lie algebra defined by Lyndon words, $\mathfrak{g} = Lie(X)/([W])_{Lie}$ is graded by length, and is also positively $\mathbb{Z}^n$-graded. Moreover,
the two sided ideal $I =(J_{ass})$ of $K\asX$ is generated by homogeneous elements
and is also $\mathbb{Z}^n$-graded, so the enveloping algebra $U =U\mathfrak{g}= K\asX/ I$ is (non-negatively) $\mathbb{Z}^n$-graded.

\begin{defnotation}
\label{multidegreedef}
A monomial $w \in X^{\ast}$, has (\emph{a non-negative})\emph{multi-degree} $\alpha =(\alpha_1, \cdots, \alpha_n) \in \mathbb{Z}^n$, $\alpha_i \geq 0$, if $w$, considered as a
commutative term,
can be written as $w = x_1^{\alpha _1}x_2^{\alpha _2}\cdots x_n^{\alpha _n}$. In this case we write $\deg (w) =\alpha$.
In particular, the unit $1\in X^{\ast}$ has multi-degree $\textbf{0} =(0, \cdots, 0)$, and $\deg(x_1) = (1,0,\cdots, 0), \; \deg(x_2)
= (0, 1, \cdots, 0), \cdots, \deg(x_n)= (0,0,\cdots, 1).$
 For each $\alpha= (\alpha _1, \alpha _2,\cdots, \alpha _n) \in \mathbb{Z}^n, \alpha _i \geq 0$,
 denote by $X_{\alpha}$ the set
 $X_{\alpha} = \{w \in \asX \mid \deg(w)= \alpha \}$.
  Then the free monoid $X^{\ast}$ is naturally $\mathbb{Z}^n$-graded:
\[X^{\ast} = \bigsqcup_{\alpha\in \mathbb{Z}^n, \alpha \geq \textbf{0}}  X_{\alpha}, \quad
X_{\textbf{0}} = \{1\}, \; X_{\alpha}.X_{\beta}\subseteq X_{\alpha+\beta}.\]
The set of Lyndon words $L$ inherits this grading canonically, it splits as a disjoint union
\[L= \bigsqcup_{\alpha \in \mathbb{Z}^{n}, \alpha >\textbf{0}} L_{\alpha}, \;\text{where}\; L_{\alpha} = L \bigcap X_{\alpha}.\]
The free associative algebra
generated by $X$ is also canonically $\mathbb{Z}^{n}$ -graded:
\[U= K \asX = \bigoplus_{\alpha \in \mathbb{Z}^{n}} U_{\alpha}, \;\text{where}\; U_{\alpha}= \Span  X_{\alpha}, \;   \alpha \in \mathbb{Z}^{n}.\]
\end{defnotation}

For example, if $X = \{x < y\}$, then
\[L_{(3,3)} = \{xxxyyy < xxyxyy< xxyyxy\}= \{xxxyyy \succ xxyxyy\succ xxyyxy\}  \]

Recall that a graded Lie algebra is a Lie algebra endowed with a gradation compatible with the Lie bracket.
The free Lie algebra is naturally  $\mathbb{Z}^n$-graded:
\[\mathfrak{L}=\Lie (X) = \bigoplus_{\alpha \in \mathbb{Z}^{n}} \mathfrak{L}_{\alpha},\;\;   \mathfrak{L}_{\alpha} = \Span \{[u]\mid u \in L_{\alpha}\}, \alpha
=(\alpha_1, \cdots, \alpha_n) \in \mathbb{Z}^n, \;  \alpha_i \geq 0 \} \]
If $w \in L$ has multi-degree $\deg (w) = \alpha$, then the standard (right) bracketing $[w]$, considered as an associative polynomial
in $\mathfrak{U}$ is a linear combination of words $u \in X_{\alpha}, u\preceq w,$ so $[w]\in \mathfrak{L}_{\alpha},$
and its highest monomial w.r.t. $\prec$ is precisely
$\overline{[w]}= w$.
Sometimes we shall also write
\[L (w): = L_{\alpha}, \;\; L (\preceq w) := \{u \in L (w), u \preceq  w\},\;\; L (\prec w) := \{u \in L (w), u \prec w\}.\]
For example, $w = xxxyyy \in L_{(3,3)}, \; L (\prec w) = \{ xxyxyy\succ xxyyxy\}$.
Let $W$ be an antichain of Lyndon monomials in $X^{+}$.
Then the  Lie ideal $J=([W])_{Lie}$ is multi-graded, $J = \bigoplus_{\alpha} J_{\alpha},$ where $J_{\alpha} = J \bigcap \mathfrak{L}_{\alpha},  \alpha
\in\mathbb{Z}^n, \alpha \geq \textbf{0}$.

\begin{remark}
Recall that if $\;\Ical=(W)$ is a monomial ideal in $K\asX$, where $W$ is an arbitrary antichain of monomials ($W\bigcap X = \emptyset$), then
$\Ical$ is $X^{+}$-graded.
 If $f=\sum c_a a   \in \Ical,$ with $a \in X^{+}, c_a \in K^{\times}$ then every monomial $a$ in this presentation is also contained in $\Ical$, or equivalently, $w
 \sqsubseteq a$ for some $w \in W$.
In contrast, a monomial Lie ideal $J = ([W])_{Lie}$ generated by Lyndon Lie monomials may contain a Lie element $g \in J$, where
$g = \sum c_a [a], \; a \in L, \;c_a \in K^{\times}$, such that $[a]$ is not in $J$ for some $a$,  see Example \ref{mon-example1}.
Moreover, the reduced Gr\"{o}bner-Shirshov Lie basis of a monomial Lie ideal $J = ([W])$ in $\Lie(X)$ may contain a Lie element $f \in J$,
which is not a Lie monomial,
 as shows Example \ref{mon-nonmonExample}.
\end{remark}

\begin{example}
\label{mon-example1}
Let $W =\{xxy\}$, $J = ([xxy])_{Lie}$,
then  the Lie monomial $[xxy]$ is a Gr\"{o}bner-Shirshov basis of the Lie ideal $J$ (since there are no compositions to resolve).
The associative polynomial $h= [xxy]_{ass}= xxy-2yxy +yxx$ is a Gr\"{o}bner basis of the associative ideal $I = (J)_{ass}$, $\overline{h}= xxy.$
Consider the Lie element $g=[[[xxy],y],y]= [xxyyy]_l\in J$. Note that $g= [xxyyy]_l$, is the left standard bracketing of $xxyyy$.
 One has:
\[ \begin{array}{ll}
f =&[[xxy],y] = [x, [xyy]] \in J,      \\
g =&[f, y]= [[[xxy],y],y]=[[x, [xyy]],y]= [[xy], [xyy]] + [x, [xyyy]]\\
   =& [xyxyy]+[xxyyy] \in J.
             \end{array}
\]
The Lie monomial  $[xyxyy]$ is not in $J$ since $xyxyy$  is normal modulo $W$.
One has
\[ \begin{array}{ll}
xxyyy = &\overline{[xxyyy]} \succ \overline{[xyxyy]}= xyxyy \\
         &\text{hence}\; \overline{g} =\overline{[[[xxy],y],y]} = xxyyy= \overline{[[x, [xyyy]]} , \; \text{and}\; w= xxy \sqsubset \overline{g}.
             \end{array}
\]
\end{example}

\subsection{Gr\"{o}bner-Shirshov bases of monomial Lie ideals, preliminaries}
\label{GSbasesSection}
In this subsection we present in terms of Lyndon words
some notions and results from Shirshov's theory based on Shirshov's \emph{regular words}, or as they call them recently "Lyndon-Shirshov words".
During the last  decades numerous works appeared involving Lyndon-Shirshov words in the study of  Gr\"{o}bner-Shirshov bases, normal forms, combinatorial and decision problems, and the interested reader can find details in \cite{BLSZ, Bokut14, Bokut20, ChenTang}, et al, and the references therein.

However, although the so called Lyndon-Shirshov words are analogous to Lyndon words (in the classical sense),
they are defined differently and neither Lyndon-Shirshov words, nor corresponding theory and results can be used directly (without certain modification) for our theory  based on the original notion of Lyndon word.

Our goal is to simplify the general procedure for finding Gr\"{o}bner-Shirshov  bases in the special case of
$\mathbb{Z}^n$-graded monomial Lie ideals $J= ([W])_{Lie}$ in $\Lie(X)$ and to apply it for our classification theorem
in Section \ref{class_sec}.

For the purpose we combine some known facts about Gr\"{o}bner-Shirshov  bases with
results proven in  \cite{Anick86}, \cite{LR}, \cite{GIF} and especially with our combinatorial results in this paper.
As a first step we yield Proposition \ref{theor_easy_vip},  which can be used
in some special cases to decide \emph{straightforwardly} (avoiding computations) whether $[W]$ is a Gr\"{o}bner-Shirshov  basis of the
monomial Lie ideal $J= ([W])_{Lie}$.
We next prove Propositions \ref{SimplifyProp}, and \ref{connected_Prop} which (in various cases)
can be and will be used to find straightforwardly the reduced Gr\"{o}bner-Shirshov  basis of
the Lie ideals $J= ([W])_{Lie}$ (and avoid
the need of heavy computations prescribed by the standard procedures).
As an application, in Section \ref{class_sec} we classify all 2-generated AS-regular algebras $U$ of global dimension $6$ and $7$ which
occur as enveloping algebras $U=U\mathfrak{g}$, of standard Lie monomial algebras
$\mathfrak{g} = \Lie(x,y) /([W])$, (in this case $[W]$ is a Gr\"{o}bner-Shirshov Lie basis).

Recall that if $f \in K\asX$ is a nonzero polynomial, then its highest
monomial with respect to the degree-lexicographic order $\prec$ on $X^{\ast}$ (with $x_1 \succ x_2\succ \cdots \succ x_n$) is denoted by $\overline{f}$. One has
$f = \alpha\overline{f} + \sum_{1 \leq i\leq m} \alpha_i u_i$, where
$\alpha, \alpha_i \in K$, $\alpha \neq 0,$ $u_i\in X^{\ast}, u_i
\prec \overline{f}$.

Let $u\in L$ be a Lyndon word of length $\geq 2.$ We know that  the right bracketing $[u]=[u]_r$ and the left bracketing $[u]_l$ are Lie elements which (considered as
associative elements) are homogeneous monic polynomials with equal highest terms \[\overline{[u]}=u =\overline{([u]_l)}.\]
More generally,
consider a new bracketing  ("rebracketing") $[u]_{*}$ of $u$, that is a Lie monomial, obtained by writing Lie brackets in some way. It is called
\emph{a regular bracketing}, or \emph{a regular Lie monomial} of $u$ if it is a monic (associative) polynomial with highest monomial $\overline{([u]_{*})}=u$.
\begin{lemma}
\label{lemma_bracketing}
Let $\omega$ be a Lyndon word, $\omega \in L_{\alpha}$, and  let $\omega=uv,$ where $u,v$ are Lyndon words.
If  $[u]_{*}$, and $[v]_{*}$ are regular bracketings (regular Lie monomials) of $u$ and $v$,  then
$([uv])_{*}:= [[u]_{*}, [v]_{*}]$ is a regular bracketing of $\omega=uv$ respecting $[u]_{*}$ and $[v]_{*}$.
In particular, $[\omega])_{u}= [[u], [v]]$  is a regular bracketing of $\omega$ respecting $[u]$.
\end{lemma}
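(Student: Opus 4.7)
The plan is to establish the claim by computing the leading associative monomial of $[[u]_{*},[v]_{*}] = [u]_{*}\cdot [v]_{*} - [v]_{*}\cdot [u]_{*}$ with respect to the degree-lexicographic order $\prec$, and comparing it against the Lyndon property of $\omega = uv$.

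First I would record the structure of each factor. By the definition of regular bracketing, $[u]_{*}$ and $[v]_{*}$ are monic polynomials in $K\langle X \rangle$ of the form
\[
[u]_{*} = u + \sum_{a \in X^{|u|},\, a \prec u} c_{a}\, a, \qquad [v]_{*} = v + \sum_{b \in X^{|v|},\, b \prec v} d_{b}\, b,
\]
and each is homogeneous in the $\mathbb{Z}^{n}$-grading of multi-degrees $\deg(u)$ and $\deg(v)$ respectively. Expanding $[u]_{*}\cdot [v]_{*}$ summand by summand produces words $ab \in X^{|u|+|v|}$, and I would check by a short case analysis (comparing positions where $a$ and $u$, respectively $b$ and $v$, first differ) that any pair $(a,b)\neq (u,v)$ satisfies $ab > uv$ in pure lexicographic order, hence $ab \prec uv$ in degree-lex. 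This shows $\overline{[u]_{*}\cdot [v]_{*}} = uv$ with coefficient $+1$, and symmetrically $\overline{[v]_{*}\cdot [u]_{*}} = vu$ with coefficient $+1$.

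Next I would invoke the defining property of Lyndon words: since $\omega = uv$ is Lyndon, it is strictly smaller than its conjugate $vu$ in the pure lexicographic order, i.e.\ $uv < vu$. Because $|uv| = |vu|$, the translation (\ref{orderdef}) converts this into $uv \succ vu$ in degree-lex. Consequently, in the difference $[u]_{*}\cdot [v]_{*} - [v]_{*}\cdot [u]_{*}$ the monomial $uv$ survives with coefficient $+1$, while $vu$ and every other monomial produced in either product is strictly $\prec uv$. Therefore $[[u]_{*},[v]_{*}]$ is a monic associative polynomial with $\overline{[[u]_{*},[v]_{*}]} = \omega$, which by Definition-Notation preceding the lemma is exactly what it means to be a regular bracketing of $\omega$.

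Finally, the regular bracketing $([u]v)_{*} := [[u]_{*},[v]_{*}]$ manifestly \emph{respects} $[u]_{*}$ and $[v]_{*}$, since these two Lie monomials appear literally inside the outer Lie bracket and are not modified; and the result lies in $\Span[L_{\alpha}]$ because $[u]_{*},[v]_{*}$ are homogeneous Lie elements of the appropriate multi-degrees and $\alpha = \deg(u)+\deg(v)$. No genuine obstacle arises -- the argument is essentially careful bookkeeping -- but the one delicate point is the interplay between the two orderings $\prec$ and $<$, precisely at the step where the Lyndon property $uv<vu$ is converted into the degree-lex comparison $uv \succ vu$ that forces the cancellation to leave $uv$ as the surviving leading term.
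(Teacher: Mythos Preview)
Your proof is correct and follows essentially the same route as the paper's: both compute $\overline{[u]_{*}\cdot[v]_{*}}=uv$ and $\overline{[v]_{*}\cdot[u]_{*}}=vu$, then use the Lyndon property $uv<vu$ together with (\ref{orderdef}) to conclude $uv\succ vu$, so the leading term of the commutator is $uv$ with coefficient $+1$. The only cosmetic difference is that the paper invokes the general identity $\overline{fg}=\overline{f}\cdot\overline{g}$ directly, whereas you re-derive this for the case at hand via a short case analysis on the products $ab$.
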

\begin{proof}
It follows from the hypothesis that $u < uv=\omega < v< vu.$  Recall that for any two elements of the free associative algebra,  $f, g \in K\asX$
one has
\begin{equation}
\label{eqHM1}
\overline{f.g}  = (\overline{f})(\overline{g}).
\end{equation}
 The following relations are considered in the free associative algebra:
\[
\begin{array}{ll}
[[u]_{*},[v]_{*}] &= ([u]_{*}).([v]_{*})-([v]_{*}).([u]_{*})\\&\\
\overline{([u]_{*}).([v]_{*})}&=( \overline{[u]_{*}} ).( \overline{[v]_{*}} ) \\&\\
                              &= u.v < v.u = ( \overline{[v]_{*}} )( \overline{[u]_{*}} ) = \overline{([v]_{*}.[u]_{*})}
                              \end{array}
\]
For monomials of the same length in $X^{+}$ we have $a < b$ \emph{iff} $a\succ b$, so $\overline{[u].([v]_{*})} = u.v \succ v.u = \overline{([v]_{*}).[u]}$, and
therefore
\begin{equation}
\label{eqHM2}
\overline{[[u]_{*},[v]_{*}]}  = uv,
\end{equation}
as desired. Clearly this bracketing of $uv$ respects both $[u]_{*}$ and $[v]_{*}$.
\end{proof}

The following is a modification of the original result  \cite{Shirshov58}, Lemma 4, see also \cite{Bokut14}, p.372.

\bigskip

\emph{Shirshov's special bracketing}.  Suppose $w = aub \in L$, $u \in L$ ($a=1$, or $b = 1$ is possible). Then the right standard bracketing satisfies
\begin{enumerate}
\item $[w]= [a[uc]d]$, where $b=cd$, $uc \in L$, and possibly $c=1$.
\item If $c \neq 1$, express $c$ as a product of Lyndon words $c= c_1c_2 \cdots c_s$, $c_i \in L$, where
$c_1\leq c_2 \leq \cdots \leq c_s$. Replacing $[uc]$ by $[\cdots[[[u],[c_1]],[c_2]],\cdots, [c_s]]$ we obtain the Lie monomial
\[
[w]_u= [a[\cdots[[[u],[c_1]],[c_2]],\cdots, [c_s]]d],
\]
which is called \emph{the Shirshov special bracketing of $w$ relative to $u$}.
\item There is an equality
$\overline{[w]_u}= w$.
\end{enumerate}

To emphasise that we use Lyndon words (and, in general, not Lyndon-Shirshov words) we slightly change the terminology.
\begin{defnotation}
\label{defnotationregularbracketing}
 Let  $w = aub \in L$, $u \in L$, where $a=1$, or $b = 1$ is possible.
We shall refer to $[w]_u$ as \emph{the regular bracketing of $w$ respecting $[u]$} and denote it also by  $(a[u]b):= [w]_u$.
\end{defnotation}

\begin{remark}
\label{remarkShirshovBracketing}
We shall give  \emph{a sketch of proof of the existence of the regular bracketing $(a[u]b)$}  in terms of Lyndon words which could be used effectively. (It is not analogous to Shirshov's proof).

We use induction on the length of $w$.
If $|w|)=2$, then each proper segment $u$ is in $X$ and obviously $[w]=[w]_u$.

Suppose the statement is true for every pair $w, u\in L,$ where $u\sqsubset w$, and  $|w|\leq m$.

Assume $|w|= m+1$, and $u$ is a proper segment of $w$. Three cases are possible (i) $w= au$, (ii) $w= ub$, and (iii) $w = aub,$
where $a, b \in X^{+}$.  Clearly, in case (i) $[w]_u = [w]$.
Assume (ii),  $w= ub$, with $b \in X^{+}$. The left standard factorisation $w = (p, q)$ satisfies $w = pq, p, q \in L$
 and $p= uv$, possibly $v = 1$. If $p = u$, then $[w]_u = [[p], [q]]$. If $p = uv, v \in X^{+}$, by  the induction  hypothesis
   $[p]_u$ is well defined and we set $[w]_u = [[p]_u, [q]]$.
    Case (iii). Suppose now $w = aub,$
where $a, b \in X^{+}$. Consider the right standard factorization of $w = (p,q)$ ($q$ is the longest Lyndon right segment of $w$, and $p\in L$). By Lemma \ref{VIP_lemma11}
either $u \sqsubset p$, or $u \sqsubset q.$ We apply the induction hypothesis and in the first case
we set $[w]_u = [[p]_u, [q]]$, while in the second case $[w]_u = [[p], [q]_u]$.   $\quad \quad \quad \quad   \quad \quad\quad \quad   \quad \quad  \quad \quad  \quad \quad   \quad \quad\quad \quad \Box$
\end{remark}

Suppose $w \in L_{\alpha}$, then its standard bracketing $[w] \in \Span [L_{\alpha}]$, and
the regular bracketing  $[w]_u= (a[u]b)$ is a Lie element satisfying
\begin{equation}
\label{regularbrack_eq}
\begin{array}{ll}
\overline{(a[u]b)}  &=aub = w, \quad \\
(a[u]b) &= [w]+ \Sigma_{v\prec w} c_v [v] \in \Span [L_{\alpha}], \; c_v \in K,  \; \forall v.
\end{array}
\end{equation}
Moreover, the regular bracketing $(a[u]b)$ is an element of the Lie ideal $([u])_{Lie}$ of $\Lie(X)$, generated by $[u]$.

\begin{example}
(1) Let  $u= xxyyxy, v = xyy, \omega = uv =xxyyxyxyy$. Then $[u]= [xxyyxy], [v] =[xyy]$, and, by Lemma
\ref{lemma_bracketing}, the Lie monomial $[[u], [v]]= [[xxyyxy],[xyy]]$ is a regular bracketing of $\omega=uv$ respecting the bracketings $[u]$ and $[v]$.
Here $[\omega]_u = [[u], [v]]=  [[xxyyxy],[xyy]]$ is the Shyrshov special bracketing of $\omega$ relative to $u$.
Note that
the standard right factorisation of $\omega = uv$ is $\omega = (xxyy)(xyxyy)$ which implies $[\omega] = [[xxyy], [xyxyy]]$, so the standard bracketing $[\omega]$ does not "respect" the standard bracketing $[u]$ of the Lyndon subword $u\sqsubset \omega$.

(2) Consider $\omega = x(xyy)xyxyy = aub,$ where $a=x, u = xyy, b=xyxyy$. Then
$[\omega]_u = [[a,[u]], [b]]= [[x,[xyy]], [xyxyyy]]$.
 \end{example}

\begin{definition}
 \label{GS-basis_def}
Let $W$ be an antichain of Lyndon words.  The set of Lie monomials $[W]$ is \emph{a Gr\"{o}bner-Shirshov basis of the Lie ideal $J=([W])_{Lie}$}
if every nonzero Lie element $f \in J$, can be written as
\[
f = c[u]+\sum_{v\prec u} c_v[v]\in \Span [L],
\]
where $u = \overline{f}, c \in K^{\times}$, and
  $u=awb$, for some $w \in W, a, b \in X^{*}$.
Note that in this case $[W]$ is \emph{the reduced Gr\"{o}bner-Shirshov basis} of $J$, and $[W]_{ass}$ is \emph{the reduced Gr\"{o}bner basis} of the associative ideal $I =
(J_{ass})$ in $K\asX$.
  \end{definition}

\begin{definition}
 \label{def_composition}
Let $W$ be an antichain
of Lyndon words, $W \subset L \setminus X$.
\begin{enumerate}
\item \cite{Anick85, Anick86}  \emph{A 2-prechain} is a word $\omega = hbt$, where $h, b, t \in X^{+}$,
 and $u= hb\in W, \;v = bt\in W$. A 2-prechain $\omega$ is a \emph{2-chain} if no proper left segment of
 $\omega$ is a 2-prechain.
 Note that  every 2-prechain $\omega $ is a Lyndon
word,  see for example \cite{GIF}, Proposition 6.4.

\item Suppose $\omega = hbt$ is \emph{ a 2-prechain},
 where $u= hb\in W, \;v = bt\in W$, and $h, b, t \in X^{+}$.
The difference of Lie elements
\[(u,v)_{\omega}= ([u]t)-(h[v])=([u]t)-[\omega] \]
is called \emph{a Lie composition of overlap}, or also \emph{a Lie composition of intersection} (in the sense of Shirshov).
\item Suppose $u,\omega\in L$, with $\omega =aub$.
\emph{The composition of inclusion} (in the sense of Shirshov), corresponding to the pair $u,\omega$, is defined as
\[(\omega,u)_{\omega}:= [\omega]- (a[u]b). \]

Note that by assumption $W$ is \emph{an antichain} of Lyndon words, that is no $u\in W$ is a proper sub-word of some $v\in W$.
Thus compositions of inclusion, in the sense of Shirshov, will not occur in the initial step of the procedure for finding a Gr\"{o}bner-Shirshov basis
of the Lie ideal $([W])_{Lie}$. However, in the process of resolving a composition of overlap, compositions of inclusion may also occur.
\end{enumerate}
\end{definition}

\begin{remark}
\label{remark}
Notation as above.
\begin{enumerate}
\item
The Lie element $(u,v)_{\omega}= ([u]t)-[\omega]$ is in the ideal $J$.
Indeed, $u, v \in W$, thus $([u]t)\in J$ and $(h[v])= [hv]= [\omega]\in J$.
\item
One has $\overline{([u]t)} = \omega =\overline{([\omega])}$, hence, either $(u,v)_{\omega} =0,$ or
 \[0 \neq (u,v)_{\omega},\; \text{and}\; \overline{(u,v)_{\omega}}\prec\omega.\]
\item
Suppose a two-prechain $\omega \in L_{\alpha}$, notation as above.
Then each of the Lie elements  $([u]t)$, and $(h[v])=[\omega]$ is in the graded component $\mathfrak{L}_{\alpha}=\Span [L_{\alpha}]$. Therefore
$(u,v)_{\omega}= ([u]t)-[\omega]$ is a homogenous Lie element of multi-degree $\alpha$, or $(u,v)_{\omega}=0$.
More precisely, $(u,v)_{\omega} \in \Span [L_{\alpha}(\prec \omega)]$.
In particular, if $\omega$ is the minimal element of $L_{\alpha}$ (w.r.t. $\prec$) then, clearly,  $(u,v)_{\omega}= ([u]t)-[\omega]= 0$, and the composition is solvable
\end{enumerate}
\end{remark}

\begin{definition}
\label{solvabledef}
Suppose $\omega$ is a 2-prechain of multi-degree $\alpha$, so $\omega\in L_{\alpha}$. The composition of overlap $(u,v)_{\omega}$ is \emph{solvable mod $[W]$} if either
$(u,v)_{\omega}=0$, or
there exist an explicit presentation
\begin{equation}
\label{composition_eq}
(u,v)_{\omega} = c_0(a_0[w_0]b_0) + c_1(a_1[w_1]b_1) + \cdots + c_{j}(a_j[w_j]b_j),
\end{equation}
where
(i) $a_jw_jb_j\prec \cdots \prec a_1w_1b_1\prec a_0w_0b_0 \prec \omega$ are words
in $L_{\alpha}$; (ii) $w_i \in W$, and $(a_i[w_i]b_i)$ is a regular bracketing respecting the right standard bracketing $[w_i], \; \forall i, \;0 \leq i \leq j$.
\end{definition}
One can find effectively whether a presentation of the form (\ref{composition_eq}) exists.
If not, there exists a presentation  \[(u,v)_{\omega}= c_0[v_0] + \cdots + c_s[v_s], \quad \text{mod}\; J,\]
where $c_0 \neq 0,$ $\omega\succ v_0$, and
$v_0\succ \cdots \succ v_s$ are Lyndon atoms in $N \bigcap L_{\alpha}$.
In this case $W$ is not the obstruction set of the associative ideal $I= (J_{ass})$, neither is $N$ the set of Lyndon atoms for $U$.

The next corollary follows from Anik's results, see \cite{Anick86}, 2.\emph{Diamond Lemma revisited} (pp 646-647), part (ii) follows from \cite{LR}.
\begin{corollary}
\label{Anick_DLrevis_cor}
If all compositions of overlap on 2-chains are solvable modulo $[W]_{ass}$ in the free associative algebra $K\asX$) then the compositions on 2-prechains are also solvable, so (i) $[W]_{ass}$
is a Gr\"{o}bner basis of the associative ideal $I= ([W]_{ass})$ in $K\asX$; (ii)   $[W]$ is a  Gr\"{o}bner-Shirshov basis of the Lie ideal $([W])_{Lie}$.
 \end{corollary}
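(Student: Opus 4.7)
The plan is to first lift solvability from 2-chains to all 2-prechains, and then invoke the Diamond Lemma in both the associative and the Lie setting. For the lifting step I would argue by induction on $|\omega|$ for a 2-prechain $\omega = hbt$. If $\omega$ is itself a 2-chain, the hypothesis applies directly. Otherwise, by Definition \ref{def_composition} some proper left segment of $\omega$ is again a 2-prechain, and choosing the shortest such left segment yields a 2-chain $\omega'$ strictly shorter than $\omega$. Writing $\omega = \omega' t'$ with $t' \in X^{+}$, the hypothesis supplies for the pair $(u', v')$ associated to $\omega'$ a solvability presentation of the form \eqref{composition_eq}. I would lift this to a presentation for $(u,v)_\omega$ by right-multiplying each summand by a regular bracketing of $t'$ (obtained by factoring $t'$ as a non-increasing product of Lyndon words via Fact \ref{LyndonThm}) and re-bracketing via Lemma \ref{lemma_bracketing}. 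The multiplicativity of leading monomials, equation \eqref{eqHM1}, together with the fact that the degree-lexicographic order on monomials of a fixed length is preserved under right-multiplication by a fixed word, guarantees that each new leading monomial is strictly below $\omega$ in $L_\alpha$, as required.

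Part (i) then follows immediately. Since $W$ is an antichain of Lyndon words, no inclusion ambiguity between two distinct elements of $[W]_{ass}$ arises at the initial step, so the only ambiguities among their leading terms are the 2-prechain overlaps. By the lifting step all such compositions are solvable, and Anick's form of the Diamond Lemma (\cite{Anick86}, \S2) implies that $[W]_{ass}$ is a Gr\"{o}bner basis of the associative ideal $I = ([W]_{ass})$ with respect to $\prec$.

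For part (ii) I would appeal to the Lie-to-associative correspondence recorded in Remark \ref{FactsLaRam} and established in \cite{LR}. Once $[W]_{ass}$ is a Gr\"{o}bner basis of $I$, the bracketed family $[W]$ is automatically a Gr\"{o}bner-Shirshov basis of the Lie ideal $J = ([W])_{Lie}$ in $\Lie(X)$, because the normal words $\mathfrak{N}(W)$ coincide with the PBW basis of the enveloping algebra $U\mathfrak{g}$ assembled from the Lyndon atoms $N = N(W)$; equivalently, a Lie element of $J$ has an expansion in Lyndon brackets whose leading term is divisible by some $w \in W$ precisely because the corresponding associative statement already holds.

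The main obstacle will be the bookkeeping inside the lifting step: after right-multiplying the solution for $\omega'$ by the tail $t'$ and re-bracketing, one must simultaneously verify that each new bracketing is regular and respects some $[w_i]$, and that each new leading monomial remains strictly below $\omega$. Lemma \ref{lemma_bracketing} is the precise tool for maintaining regularity under this kind of outer bracketing, while the preservation of the strict inequality reduces to the elementary observation that $t'$ is common to every summand and right-multiplication by a fixed word preserves the degree-lexicographic order on monomials of equal length. Once these two points are in hand, the Diamond Lemma handles both conclusions without further work.
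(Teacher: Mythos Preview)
The paper does not actually prove this corollary: it records that the associative statement is Anick's refinement of the Diamond Lemma (\cite[\S2, pp.~646--647]{Anick86}) and that the Lie statement then follows from \cite{LR}. Your overall architecture---reduce 2-prechains to 2-chains, then invoke the Diamond Lemma in the associative setting and transfer to the Lie setting via Remark~\ref{FactsLaRam}---matches exactly what those citations provide, so parts (i) and (ii) are fine once the lifting step is in place.

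The lifting step, however, has a genuine gap. Given a 2-prechain $\omega = hbt$ with $u = hb,\ v = bt \in W$ and a 2-chain $\omega'$ sitting as its shortest left-segment 2-prechain with pair $(u',v')$, the element you obtain by right-multiplying the solvability presentation of $(u',v')_{\omega'}$ by a regular bracketing of the tail $t'$ is a presentation of (a bracketed form of) $(u',v')_{\omega'}\cdot t'$, \emph{not} of $(u,v)_\omega$. There is no reason these should coincide, because $v'$ and $v$ are in general different elements of $W$. What is missing is the standard triangle argument that Anick carries out: since $W$ is an antichain and both $u$ and $u'$ are left segments of $\omega$ lying in $W$, one has $u'=u$; thus three occurrences $u,\,v',\,v$ of elements of $W$ sit inside $\omega$. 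One then writes
\[
(u,v)_\omega \;=\; \bigl(\text{extension of }(u,v')_{\omega'}\bigr)\;+\;\bigl(\text{residual term controlled by the overlap of }v'\text{ and }v\bigr),
\]
where the first summand is handled by the 2-chain hypothesis and your right-multiplication idea, and the second lives in a strictly shorter word (since $v'$ starts strictly to the right of $u$) and is handled by the induction on $|\omega|$. Your sketch supplies only the first leg of this triangle, so as written it does not close; once you add the second leg, Lemma~\ref{lemma_bracketing} and equation~\eqref{eqHM1} do exactly the bookkeeping you anticipate.
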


The following statement is a consequence from well-known results on Gr\"{o}bner bases, Corollary \ref{Anick_DLrevis_cor},
and our assumption that $W$ is an antichain of Lyndon words.
\begin{proposition}
\label{theor_easy_vip}
Let $(N,W)$ be a Lyndon pair, let $J=([W])_{Lie}$ be the Lie ideal of $Lie(X)$, generated by $[W]$,  let $I$ be the two sided ideal $I = ([W]_{ass})$  in the free associative
algebra $K \asX$. Suppose
$\mathfrak{g}=Lie(X)/J$, so $U= U\mathfrak{g} = K \asX/I$ is the enveloping algebra of $\mathfrak{g}$.
The following conditions are equivalent.
\begin{enumerate}
\item
 \label{theor_easy_vip1}
 The set of Lie monomials $[W]$ is a Gr\"{o}bner-Shirshov basis of the Lie ideal $J$, see Definition \ref{GS-basis_def}.
 \item
 \label{theor_easy_vip2}
For every 2-chain $\omega=abc, u=ab\in W, v =bc\in W$,
    the Lie composition of overlap $(u,v)_{\omega}$ is solvable mod $[W]$.
 \item
\label{theor_easy_vip3} The set
\[[N]= \{[u]\mid u \in N \}\]
is a K-basis of $\mathfrak{g}$.
    \item
     \label{theor_easy_vip4}
    The set $[W]_{ass}$ (considered as a set of "associative" elements of $K\asX$) is a Gr\"{o}bner basis of the (associative) ideal $I = ([W]_{ass})$  (w.r.t. the ordering
    $\prec$ on $X^{*}$).
 \item
  \label{theor_easy_vip5}
 The enveloping algebra $U\mathfrak{g} = K \asX/I$ is a graded algebra whose set of obstructions is $W$,
 that is every $f\in I$ satisfies $\overline{f} = awb,$
 for some $w \in W, a, b \in X^{*}$.
     \item
      \label{theor_easy_vip6}
     The set of monomials
     \[\mathfrak{N}=\{l_1l_2\cdots l_s \mid s \geq 1, \; l_1 \geq  l_2 \geq \cdots \geq l_s, \;  l_i \in N, \; 1 \leq i \leq s\}\bigcup \{1\}\]
     is a normal $K$- basis of $U\mathfrak{g}$.
    \end{enumerate}
 In this case $N$ is a connected set of Lyndon atoms.
\end{proposition}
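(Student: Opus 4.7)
I would prove all six conditions equivalent by running the cycle $(1)\Rightarrow(4)\Rightarrow(5)\Rightarrow(6)\Rightarrow(3)\Rightarrow(1)$, and handle $(1)\Leftrightarrow(2)$ separately through Corollary \ref{Anick_DLrevis_cor}. The final assertion that $N$ is connected then falls out of Proposition \ref{VIPproposition}(4), since once any of the equivalent conditions holds, $\mathfrak{C}(X,W)$ contains the enveloping algebra $U\mathfrak{g}$. Throughout I would exploit the multi-grading on $\Lie(X)$ and $K\langle X\rangle$ to split every argument into fixed-weight pieces, where the reduction is controlled by the triangular identities $\overline{[u]}=u$ and \eqref{regularbrack_eq}.

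\textbf{The easy segment.} The equivalences $(4)\Leftrightarrow(5)\Leftrightarrow(6)$ are nearly formal. By definition $[W]_{ass}$ is an associative Gr\"obner basis of $I$ exactly when $\overline{I}=(\overline{[W]_{ass}})=(W)$ as a monomial ideal; since $W$ is already an antichain with $W\cap X=\emptyset$, this is equivalent to $W$ being the obstruction set of $U\mathfrak{g}$. Once this holds, $\mathfrak{N}(I)=\mathfrak{N}(W)$, and Theorem II(1) identifies $\mathfrak{N}(W)$ with the PBW-type set in (6), while the standard identification $U\mathfrak{g}\simeq \Span_K\mathfrak{N}(I)$ promotes the set-theoretic equality to a basis equality. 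For the Lie side, $(1)\Rightarrow(2)$ is immediate from Definition \ref{GS-basis_def}, since each composition $(u,v)_{\omega}$ lies in $J$ with leading monomial strictly $\prec\omega$, and $(2)\Rightarrow(1)$ is Corollary \ref{Anick_DLrevis_cor}.

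\textbf{The Lie-associative bridge and PBW.} The core step is $(1)\Leftrightarrow(4)$. For $(1)\Rightarrow(4)$: because every generator $[w]$ is monic with $\overline{[w]}=w$ and multiplication in $K\langle X\rangle$ interacts well with $\prec$, reduction of an element $g\in I$ against $[W]_{ass}$ mirrors the Lie reduction against $[W]$ guaranteed by (1); the hypothesis forces $\overline{g}=awb$ with $w\in W$, which is precisely condition (4). Conversely, for $(4)\Rightarrow(1)$, I work multi-grading by multi-grading: any nonzero $f\in J_\alpha=J\cap\mathfrak{L}_\alpha$ lies in $\Span[L_\alpha]$, and by (4) its leading monomial is $awb$ for some $w\in W$; the regular bracketing $(a[w]b)$ given by \eqref{regularbrack_eq} together with Lemma \ref{lemma_bracketing} produces a Lie element in $J_\alpha$ with the same leading monomial, allowing me to subtract and iterate, which gives the Lie reduction demanded by Definition \ref{GS-basis_def}. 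The equivalence $(6)\Leftrightarrow(3)$ is the PBW theorem: a $K$-basis of $\mathfrak{g}$ produces a PBW basis of $U\mathfrak{g}$ by non-increasing ordered products, and matching this against the presentation of $\mathfrak{N}$ in (6) forces $[N]$ to be a basis of $\mathfrak{g}$; the reverse implication is the direct half of PBW.

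\textbf{Main obstacle and connectedness.} The genuinely delicate step is $(4)\Rightarrow(1)$, since the assumption only controls reduction in the associative algebra, but the conclusion demands that the reduction can be carried out entirely inside the free Lie algebra without escaping $\Span[L_\alpha]$. The saving fact is that each $\mathfrak{L}_\alpha$ is spanned by the Lyndon-Lie monomials $\{[u]:u\in L_\alpha\}$ and that every regular bracketing respecting $[w]$ stays in this span; this rigidity is what makes the Lie reduction terminate with the same combinatorics as the associative one. Once the six conditions are shown equivalent, the final sentence is immediate: the enveloping algebra $U\mathfrak{g}$ lies in $\mathfrak{C}(X,W)$, so Proposition \ref{VIPproposition}(4) applies and yields that $N$ is connected.
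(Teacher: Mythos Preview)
Your proposal is correct and is the natural expansion of the paper's own treatment, which gives no detailed proof but simply states that the proposition ``is a consequence from well-known results on Gr\"obner bases, Corollary \ref{Anick_DLrevis_cor}, and our assumption that $W$ is an antichain of Lyndon words.'' Your outline fills in precisely these ingredients: the Diamond Lemma via Anick's corollary, the PBW theorem linking (3) and (6), the formal identifications $(4)\Leftrightarrow(5)\Leftrightarrow(6)$, and Proposition \ref{VIPproposition} for the connectedness of $N$.

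One minor observation: your direct argument for $(1)\Rightarrow(4)$ is under-justified as written. Hypothesis (1) controls only Lie elements $f\in J$, whereas (4) demands that every associative $g\in I$ have $\overline{g}$ divisible by some $w\in W$; the claim that associative reduction ``mirrors'' Lie reduction is exactly the nontrivial content of the Shirshov/Lalonde--Ram theory and cannot be taken for granted. However, this gap is harmless in your overall scheme: Corollary \ref{Anick_DLrevis_cor}, which you already invoke for $(2)\Rightarrow(1)$, explicitly also gives $(2)\Rightarrow(4)$ (part (i) of that corollary). Hence the chain $(1)\Rightarrow(2)\Rightarrow(4)$ closes the cycle cleanly without the hand-wavy step, and $(4)\Rightarrow(1)$ is immediate since $J\subset I$. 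With that reroute your argument is complete and matches what the paper intends.
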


\begin{defnotation}
\label{m(N)c(W)}
Let $(N,W)$ be a Lyndon pair. We introduce the following numerical invariants.
\begin{equation}
\label{m(N)}
 \begin{array}{l}
 m = m(N) := \max \{|u|\mid u\in N\}\\
 c= c(W):= \min \{|\omega|\mid \omega \;\text{is a two-chain on } W=W(N)\}.
 \end{array}
  \end{equation}
\end{defnotation}
 The following Proposition gives \emph{some purely combinatorial conditions} sufficient for $[W]$ to be a Gr\"{o}bner-Shirshov basis of the Lie ideal $J = ([W])_{Lie}$.

\begin{proposition}
\label{SimplifyProp}
In notation and assumptions of Proposition \ref{theor_easy_vip}.
Let $(N,W)$ be a Lyndon pair.
\begin{enumerate}
\item Suppose $u=ab, v =bc\in W$, and $\omega=abc$ is a 2-chain on $W$ of multi-degree $\deg\omega =\alpha$, or equivalently, $\omega \in L_{\alpha}$. If $(L_{\alpha}(\prec \omega)) \bigcap N =
    \emptyset$,
then the composition of overlap $(u,v)_{\omega}$ is solvable mod $[W]$. Clearly, this is so, whenever $m(N) < |\omega|.$
\item Suppose $u \in W$, and $\omega = aub\in L_{\alpha}$.
The composition of inclusion corresponding to the pair $\omega, u$,
$(\omega,u)_{\omega}:= [\omega]-(a[u]b)$, is solvable mod $[W]$, whenever $(L_{\alpha}(\prec \omega)) \bigcap N = \emptyset$.
In particular, this is so if $m(N) < |\omega|$.
\item Suppose $|N|= d <\infty$, $N$ is connected and some of the following conditions is in force:
 \begin{enumerate}
 \item
 \label{SimplifyPropa}
 $m(N) < c(W)$ that is each $2$-chain $\omega$ on $W$ has length $|\omega|> |u|, \forall u \in N$;
 \item
  \label{SimplifyPropb}
 Every 2-chain $\omega$ on $W$ is a minimal word in its $\mathbb{Z}^n$-component, $L_{\alpha}$, that is $L_{\alpha}(\prec \omega)= \emptyset$.
 \item
  \label{SimplifyPropc}
 Every $u \in N$,  belongs to some $\mathbb{Z}^n$-component, $L_{\alpha}$, which does not contain any $2$-chain $\omega$.
 \end{enumerate}
Then $[W]$ is a Gr\"{o}bner-Shirshov  basis of the Lie ideal $J = ([W])_{Lie}$ in $\Lie(X)$,
the Lie algebra $\mathfrak{g} = \mathfrak{g}_W$ is a standard monomial Lie algebra
with $\dim\mathfrak{g} = d$ and
a $K$-basis $[N]$.
In this case the enveloping algebra $U = U\mathfrak{g}$  belongs to the class $\mathfrak{C}(X,W)$ and is an Artin-Schelter regular algebra of $\gldim U =
d$.
\end{enumerate}
\end{proposition}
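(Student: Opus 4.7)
The plan is to establish parts (1) and (2) by an almost identical leading-term reduction argument and then verify that each of the three combinatorial conditions in part (3) forces every composition on a $2$-chain (and every composition of inclusion arising in the reduction process) to fall under the hypothesis of (1) or (2).

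For part (1), I would start from Remark \ref{remark}(3), which tells us that the composition $(u,v)_\omega = ([u]c) - [\omega]$ lies in $J \cap \Span[L_\alpha(\prec\omega)]$. Under the hypothesis $L_\alpha(\prec\omega)\cap N=\emptyset$, every Lyndon word $v\prec\omega$ of multidegree $\alpha$ is non-standard, so it admits a factorization $v=a'w'b'$ with $w'\in W$. I would then induct on the leading monomial $v_0=\overline{(u,v)_\omega}$. Writing $v_0=a_0w_0b_0$ with $w_0\in W$, the regular bracketing $(a_0[w_0]b_0)$ has leading monomial $v_0$ by (\ref{regularbrack_eq}), so for an appropriate scalar $c_0\in K^\times$ the difference
\[
f_1 := (u,v)_\omega - c_0\,(a_0[w_0]b_0)
\]
still lies in $J\cap\Span[L_\alpha(\prec\omega)]$ but has strictly smaller leading monomial, or vanishes. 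Iterating this reduction produces the desired presentation of the form (\ref{composition_eq}); termination is immediate because $L_\alpha(\prec\omega)$ is a finite totally ordered set with respect to $\prec$. Part (2) is the same argument applied to $(\omega,u)_\omega=[\omega]-(a[u]b)$, which also lies in $J\cap\Span[L_\alpha(\prec\omega)]$ because both $[\omega]$ and $(a[u]b)$ are regular bracketings of $\omega$ and hence have highest monomial $\omega$.

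For part (3), I would show that each of (a), (b), (c) implies the hypothesis of (1) for every $2$-chain $\omega$ on $W$. Since every element of $L_\alpha(\prec\omega)\cap N$ has the same length as $\omega$ (being of the same multidegree), condition (a) $m(N)<c(W)\le|\omega|$ forces $L_\alpha(\prec\omega)\cap N=\emptyset$; condition (b) gives this trivially since $L_\alpha(\prec\omega)=\emptyset$; and under (c) no $2$-chain shares multidegree with any atom, so again the intersection is empty. By parts (1) and (2) all compositions of overlap on $2$-chains are solvable mod $[W]$, and compositions of inclusion that may occur during the reduction process are handled by part (2) under the same combinatorial condition. Corollary \ref{Anick_DLrevis_cor} then shows that $[W]_{\mathrm{ass}}$ is a Gr\"obner basis of $I=(J_{\mathrm{ass}})$ and $[W]$ is a Gr\"obner-Shirshov basis of $J=([W])_{Lie}$. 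Proposition \ref{theor_easy_vip} then places $U=U\mathfrak{g}$ in $\mathfrak{C}(X,W)$ with Lyndon pair $(N,W)$ and gives $[N]$ as a $K$-basis of $\mathfrak{g}$. Finally, since $N$ is finite of order $d$ and connected by hypothesis, Proposition \ref{UgProp} delivers $\dim\mathfrak{g}=d$, Artin-Schelter regularity of $U$, and $\gldim U=\gkdim U=d$.

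The only step that carries any genuine content is the leading-term reduction in part (1); the main thing to be careful about is that the reduction stays inside the multigraded component $\mathfrak{L}_\alpha$ and strictly decreases the leading monomial at each step, so that the termination and the bound $a_iw_ib_i\prec\omega$ demanded in Definition \ref{solvabledef} are both automatic. Everything else in the proof of (3) is a bookkeeping verification that the stated combinatorial hypotheses really do preclude the existence of atoms in the relevant $L_\alpha(\prec\omega)$, followed by an invocation of the previously established Corollary \ref{Anick_DLrevis_cor}, Proposition \ref{theor_easy_vip}, and Proposition \ref{UgProp}.
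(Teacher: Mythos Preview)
The paper states Proposition \ref{SimplifyProp} without proof, so there is no argument in the paper to compare against; your proposal supplies exactly the natural proof that the paper tacitly relies on. Your leading-term reduction for part (1) is correct: since $(u,v)_\omega\in\Span[L_\alpha(\prec\omega)]$ and every Lyndon word in $L_\alpha(\prec\omega)$ contains some $w\in W$ by hypothesis, one subtracts the corresponding regular bracketing and strictly lowers the leading term inside the finite set $L_\alpha(\prec\omega)$, terminating in a presentation of the form (\ref{composition_eq}).

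One small inaccuracy: in part (2) you assert that $(\omega,u)_\omega=[\omega]-(a[u]b)$ lies in $J$, but for a general Lyndon word $\omega$ there is no reason for $[\omega]$ itself to lie in $J$. This is harmless, however, because your reduction argument never actually uses membership in $J$; it only uses that the difference lies in $\Span[L_\alpha(\prec\omega)]$ (which follows from (\ref{regularbrack_eq}) since both bracketings have highest monomial $\omega$) together with the hypothesis $L_\alpha(\prec\omega)\cap N=\emptyset$. Your verification in part (3) that each of (a), (b), (c) forces $L_\alpha(\prec\omega)\cap N=\emptyset$ for every $2$-chain $\omega$, followed by the invocation of Corollary \ref{Anick_DLrevis_cor}, Proposition \ref{theor_easy_vip}, and Proposition \ref{UgProp}, is correct and complete.
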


\section{Some classical monomial Lie algebra defined by Lyndon words}
\label{nilpotentsec}
\subsection{The free nilpotent Lie algebra $\mathfrak{L}(m)$ of nilpotency class $m$ is a standard monomial Lie algebra defined by Lyndon words}

\begin{notation}
\label{J_m_notation}
Denote by $J_{m+1}$ the Lie ideal of $\Lie(X)$ \emph{generated} by all $m$-(left) nested Lie
monomials $f_{m+1}$ (length $m+1$), see (\ref{Lmeq2}).
\end{notation}
The definition of $J_{m+1}$ and (\ref{Lmeq1}) imply
\begin{equation}
\label{Lmeq2}
\begin{array}{lll}
 J_{m+1} &\supseteq & \Span \{f_{m+1}= [[[[x_{i_1}, x_{i_2}], \cdots], x_{i_m}],
x_{i_{m+1}}]\mid x_{i_s} \in X, \;1 \leq s \leq m+1\}\\
&=& \Span[L_{m+1}].
\end{array}
\end{equation}
Moreover,
\begin{equation}
\label{Lmeq3}
J_{m} \supseteq J_{m+1} \supseteq \cdots, \quad m \geq 2.
\end{equation}

\begin{remark}
\label{JacobsonNil} Suppose that $J$ is a Lie ideal in $\Lie (X)$ and $\mathfrak{g} = \Lie (X)/J$ is a graded
 \emph{nilpotent Lie algebra of nilpotency class}
$m$.
Then all $m$-left nested Lie monomials $f_{m+1}$ of degree $\geq m+1$ are zero in $\mathfrak{g}$.

The Lie algebra $\mathfrak{L}(m) = \Lie(X)/ J_{m+1}$ is the
\emph{free nilpotent Lie algebra of degree} $m$, see \cite{jacobson}.
\end{remark}

Recall that $L_s$ denotes the set of all Lyndon words of length $s, s = 1, 2, 3, \cdots$.
Denote by $L(m)$, $m\geq 2$,  the set of all Lyndon words of length $\leq m$.
Clearly, $L(m)$  is closed under taking Lyndon subword, so \textbf{C1} and \textbf{C2} are in force.
Let
$W(m)$ be (the unique) antichain of Lyndon words whose set of Lyndon atoms is $L(m)$, so
\begin{equation}
\label{freeNilp_eq0}
\begin{array}{ll}
L(m)&= \{u \in L\mid |u|\leq m\}= \bigcup_{1 \leq s \leq m} L_s, \\
W(m)&:=W(L(m)), \; \; \text{and $(L(m), W(m))$ is a  Lyndon pair}
\end{array}
\end{equation}
 Consider the Lie algebra
\begin{equation}
\label{freeNilp_eq1}
\mathfrak{F}_m = \Lie(X)/J(m), \quad J(m) : = ([W(m)])_{Lie},
\end{equation}
where $J(m)$ denotes the Lie ideal of $\Lie (X)$ generated by the set of Lyndon-Lie monomials
\[[W(m)]= \{[w]\mid w \in W(m)\}.\]

\[  W(m)\bigcap L_s = \emptyset, \; 1 \leq s \leq m, \quad W(m) \supset L_{m+1} \]

\begin{corollary}
\label{freeNilp_cor}
In notation as above the following conditions hold.
\begin{enumerate}
\item
The set $[W(m)]$ is a Lyndon-Shirshov Lie basis of the Lie ideal $J(m)$, so $\mathfrak{F}_m$ is \emph{a standard monomial Lie algebra},  see Definition \ref{monLiedef}
\item
The Lie ideal $J_{m+1}$ generated by all $m$-nested Lie
monomials (Notation \ref{J_m_notation}) coincides with the ideal $J(m)$,
generated by the Lyndon-Lie monomials $[W(m)]$,
$J_{m+1}=J(m)$.
\item The free nilpotent Lie algebra $\mathfrak{L}(m) $  of nilpotency class $m$
is isomorphic to the standard monomial Lie algebra $\mathfrak{F}_m$, given in (\ref{freeNilp_eq1}).
\item
Moreover,
the enveloping algebra $U\mathfrak{L}(m)$ is standard finitely presented as
\[U\mathfrak{L}(m) \cong K\asX/([W(m)]_{ass}) = U\mathfrak{F}_m,\]
where the finite set $[W(m)]_{ass}$  is an associative Gr\"{o}bner basis for the ideal $I = ([W(m)]_{ass})$ of $K\asX$.
\item The class $\mathfrak{C} (X, W(m))$ contains an AS-regular algebra of global dimension $d=|L(m)|$,
namely the enveloping algebra $U(m) = U\mathfrak{L}(m)$.
\end{enumerate}
\end{corollary}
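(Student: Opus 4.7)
The plan is to first identify the obstruction set $W(m)$ concretely enough to compare on the associative and Lie sides, then match the ideals $J(m)$ and $J_{m+1}$ via their graded pieces, and finally invoke Proposition \ref{SimplifyProp}(3)(a) to promote the generators $[W(m)]$ into a Gr\"obner--Shirshov basis. As a preliminary observation I would check $L_{m+1}\subseteq W(m)$: a Lyndon word $w$ of length $m+1$ lies outside $L(m)$, yet every proper Lyndon subword of $w$ has length at most $m$ and hence belongs to $L(m)$, so $w$ is a minimal element of $L\setminus L(m)$ with respect to the subword order. Conversely, by construction every $u\in W(m)$ satisfies $|u|\geq m+1$.

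Next I would prove $J(m)=J_{m+1}$, which yields parts (2) and (3) at once. For $J_{m+1}\subseteq J(m)$, by (\ref{Lmeq1}) the set $[L_{m+1}]$ is a $K$-basis of $\mathfrak{L}_{m+1}$, and $L_{m+1}\subseteq W(m)$ gives $[L_{m+1}]\subseteq [W(m)]\subseteq J(m)$; since $J_{m+1}$ is the Lie ideal generated by $\mathfrak{L}_{m+1}$, the inclusion follows. For the reverse direction I would first establish the cleaner identification $J_{m+1}=\bigoplus_{k\geq m+1}\mathfrak{L}_k$: the Lie-ideal property together with an induction on $k\geq m+1$ based on the fact that $\mathfrak{L}_{k+1}$ is spanned by left-nested brackets $[g,x]$ with $g\in\mathfrak{L}_k,\ x\in X$, shows $\mathfrak{L}_k\subseteq J_{m+1}$ for every $k\geq m+1$; the reverse containment is forced by homogeneity. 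Once this is in hand, every $w\in W(m)$ has $[w]\in\mathfrak{L}_{|w|}\subseteq J_{m+1}$, giving $J(m)\subseteq J_{m+1}$. Part (3) is then automatic from $\mathfrak{L}(m)=\Lie(X)/J_{m+1}=\Lie(X)/J(m)=\mathfrak{F}_m$.

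Part (1) follows from Proposition \ref{SimplifyProp}(3)(a). The atom set $N=L(m)$ is finite and connected (for each $1\leq s\leq m$ the Lyndon word $x_1 x_n^{s-1}$ witnesses $N\cap L_s\neq\emptyset$), and $m(N)=m$. The key combinatorial estimate is the two-chain bound: if $\omega=hbt$ is a two-chain on $W(m)$ with $u=hb,\ v=bt\in W(m)$ and $h,b,t\in X^{+}$, then $|\omega|=|u|+|t|\geq(m+1)+1=m+2$, so $c(W(m))\geq m+2>m=m(N)$. Proposition \ref{SimplifyProp}(3)(a) therefore simultaneously delivers that $[W(m)]$ is a Gr\"obner--Shirshov basis of $J(m)$, that $\mathfrak{F}_m$ is a standard monomial Lie algebra of dimension $d=|L(m)|$, and that $U\mathfrak{F}_m\in\mathfrak{C}(X,W(m))$ is Artin--Schelter regular with $\gldim=d$. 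Parts (4) and (5) then follow immediately: Proposition \ref{theor_easy_vip}(4) translates the Gr\"obner--Shirshov basis into the associative Gr\"obner basis $[W(m)]_{ass}$ of $I=([W(m)]_{ass})$, yielding $U\mathfrak{L}(m)\cong K\asX/I$ as a standard finite presentation, and the Artin--Schelter regularity conclusion is (5). The main obstacle I anticipate is the identification $J_{m+1}=\bigoplus_{k\geq m+1}\mathfrak{L}_k$, since it rests on a careful inductive argument using the bracket structure of $\Lie(X)$ rather than purely formal manipulations; everything else is bookkeeping once this homogeneity is secured.
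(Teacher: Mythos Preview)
Your proof is correct and follows essentially the same route as the paper: both establish $L_{m+1}\subseteq W(m)$ and $|w|\geq m+1$ for $w\in W(m)$, use these to show $J(m)=J_{m+1}$ via the graded structure, and then invoke the length inequality $c(W(m))\geq m+2>m=m(N)$ (Proposition~\ref{SimplifyProp}(3)(a)) to conclude $[W(m)]$ is a Gr\"obner--Shirshov basis. The only differences are cosmetic: you prove part~(2) before part~(1) and make the identification $J_{m+1}=\bigoplus_{k\geq m+1}\mathfrak{L}_k$ explicit (the paper cites this via (\ref{Lmeq2}), (\ref{Lmeq3})), and you supply the connectedness witness $x_1x_n^{s-1}$ which the paper omits.
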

\begin{proof}
(1) Every 2-chain $\omega$ on $W(m)$ has length $|\omega|\geq m+2$, and the corresponding composition is solvable, since there are no atoms of length $\geq m+1$. Hence the set
$[W(m)]$ is a Lyndon-Shirshov basis of the Lie ideal $J(m)= ([W(m)])_{Lie}$.
(2)
Clearly, every $w \in W(m)$ has length $q= |w|\geq m+1$, hence $[w]\in [L_{q}] \subset J_q \subseteq J_{m+1}$  (by (\ref{Lmeq2}) and (\ref{Lmeq3})). Since the set of generators
$\{[w]\mid w  \in W(m)\}$ for the Lie ideal $J(m)$ is contained in $J_{m+1}$, one has
 \[J(m) \subseteq J_{m+1}.\]
But $L_{m+1}\subset W(m)$, and (\ref{Lmeq1}) gives
\[
Span_K \{[u]\mid u \in L_{m+1}\}
= \Span_K \{[[\cdots[[x_{i_1}, x_{i_2}], \cdots ], x_{i_{m}}],x_{i_{m+1}}]\}, \]
so $J_{m+1} \subseteq J(m)$,
and therefore $J(m) = J_{m+1}$.
(3)
The equality of the two ideals implies straightforwardly  \[\mathfrak{F}_m = \Lie(X)/J(m) \cong \Lie(X)/J_{m+1} =\mathfrak{L}(m),\]
so $\mathfrak{F}_m$ is isomporphic to the free nilpotent Lie algebra of
 of degree $m$ (or nilpotency class $m$)
see Remark \ref{JacobsonNil}.
(3) implies (4). (5) follows straightforwardly from Proposition \ref{UgProp}.
\end{proof}

\begin{remark} The free nilpotent algebra $\mathfrak{L}(m)$ with
$g$ generators
$x_1, \cdots, x_g$ and
nilpotency class $m$,
 has a $K$-basis
\[\{[w]\mid w \in \bigcup_{1 \leq s \leq m} L_{s}\}, \]
and dimension
\[\dim \mathfrak{L}(m)= |L(m)|= \sum_{1 \leq i \leq m} M(g,i),\]
where $|X|=  g$,  $M(g,i)$ is the necklace polynomials, or
(Moreau's) necklace-counting function
\[
g^n = \sum_{d|n} d
M(g, d).\]
One can also use the Witt's formula
\[
|L_n| = \frac{1}{n} \sum_{d|n} \mu (d)  (g)^{n/d} ,\; \text{where}\; \mu\;\text{is the M\"{o}bius function}.
\]
 The numbers of binary Lyndon words of each length, starting
with length one, form the integer sequence

\[  2, 1, 2, 3, 6, 9, 18, 30, 56, 99, 186, 335, \cdots .\]
\end{remark}

\begin{corollary}
Let $J$ be a Lie ideal in $\Lie(X)$, $\mathfrak{g} = \Lie(X)/J$, $U= U\mathfrak{g}= K \asX/ I$ (as in Notation-Convention
\ref{notconventionB}), and let $(N,W)$ be the corresponding Lyndon pair for the
enveloping algebra $U$.
Suppose the set of Lyndon atoms $N =N(W)$ is finite,  and
$m > 2$ is the minimal integer, such that $N$ contains atoms of length $m$, but there are no atoms of length $m+1.$
Then all $m$-nested Lie monomials $f_{m+1}$  belong to $J$, so $J_{m+1}\subseteq J$, and therefore the algebra $\mathfrak{g} = Lie(X) /J$ is nilpotent of degree $m$.
In particular, $m = \max\{|u|\mid u \in N\}$.
\end{corollary}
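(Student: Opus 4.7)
The plan is to deduce this corollary essentially for free from Proposition \ref{VIPproposition}(1), which already contains all the substantive work; no genuinely new ingredient is required. By hypothesis $N \cap L_m \neq \emptyset$ while $N \cap L_{m+1} = \emptyset$, so $s := m+1$ is the minimal positive integer with $N \cap L_s = \emptyset$, and $s \geq 3$ since $m > 2$. The enveloping algebra $U\mathfrak{g}$ lies in $\mathfrak{C}(X,W)$, so the hypotheses of Proposition \ref{VIPproposition} are satisfied for the Lyndon pair $(N,W)$.

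First I would apply Proposition \ref{VIPproposition}(1) with this value of $s$. Two conclusions fall out at once: (i) $N \cap L_{m+1+k} = \emptyset$ for all $k \geq 0$, which combined with $N \cap L_m \neq \emptyset$ yields $m = \max\{|u|\mid u \in N\}$; and (ii) the Lie algebra $\mathfrak{g}$ is nilpotent of nilpotency class $s-1 = m$.

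Next, to obtain the sharper assertion $J_{m+1}\subseteq J$, I would extract the key intermediate inclusion that is actually proved inside Proposition \ref{VIPproposition}(1). As soon as $N \cap L_{m+1} = \emptyset$, the equalities (\ref{Lmeq1}) identify the graded component $\mathfrak{L}_{m+1}$ simultaneously with $\Span_K [L_{m+1}]$ and with the span of the $m$-left-nested Lie monomials $[[\cdots[x_{i_1},x_{i_2}],\cdots],x_{i_{m+1}}]$, and they yield the inclusion $\mathfrak{L}_{m+1}\subset J$. Since $J_{m+1}$ is by definition the Lie ideal of $\Lie(X)$ generated by precisely the $m$-nested Lie monomials $f_{m+1}\in\mathfrak{L}_{m+1}$, and $J$ is itself a Lie ideal containing all these generators, the inclusion $J_{m+1}\subseteq J$ follows.

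The one point that might look like an obstacle is confirming that $m$ is \emph{exactly} the nilpotency class, not merely an upper bound. This is settled by observing that $N \cap L_m \neq \emptyset$ combined with the fact that $[N]$ is a $K$-basis of $\mathfrak{g}$ (Proposition \ref{theor_easy_vip}, Remark \ref{FactsLaRam}) forces $\mathfrak{g}^{m}\neq 0$; paired with $J_{m+1}\subseteq J$, which gives $\mathfrak{g}^{m+1}=0$, this pins the nilpotency class down to $m$. All told, the proof is a three-line packaging of Proposition \ref{VIPproposition}(1) in the customary language of the lower central series and the canonical ideals $J_{m+1}$.
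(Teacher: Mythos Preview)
Your proposal is correct and is precisely the argument the paper intends: the corollary is stated without proof because it is an immediate repackaging of Proposition~\ref{VIPproposition}(1) (together with Remark~\ref{FactsLaRam} for the $K$-basis $[N]$), exactly as you outline. One cosmetic point: your opening claim that $s=m+1$ is the \emph{minimal} integer with $N\cap L_s=\emptyset$ should be stated after, not before, invoking the first clause of Proposition~\ref{VIPproposition}(1) (in contrapositive form: $N\cap L_m\neq\emptyset$ forces $N\cap L_j\neq\emptyset$ for all $j\le m$); otherwise the logic reads slightly out of order, though the content is unaffected.
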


\begin{remark}
\label{Obstructionsrem}
 Let $\mathfrak{g} = \Lie(X)/J$, where $J$ is a Lie ideal in $\Lie(X)$, and $U= U\mathfrak{g}= K \asX/ I$ be its enveloping algebra.
Suppose $(N, W)$ is the  Lyndon pair for the enveloping algebra $U$, so  $W$ is the set of obstructions for $U$. In general, this datum does not determine uniquely the
Gr\"{o}bner -Shirshov basis of the Lie ideal $J$. It is possible that $[W]$ is a Gr\"{o}bner-Shirshov basis of the Lie ideal $T = ([W])_{Lie}$, and $W$ is the obstructions set of
$J$, but $J \neq T$. However in some "extreme cases", see Corollaries \ref{Obstructionscor1} and \ref{Obstructionscor2}
the set of obstructions determines uniquely the defining relations of $U$, and the ideal $I$.
\end{remark}
\begin{corollary}
\label{Obstructionscor1}
Let $J$ be a Lie ideal in $\Lie(X)$, where $\mathfrak{g} = \Lie(X)/J$ is a graded Lie algebra, $U= U\mathfrak{g}= K \asX/ I$ (as usual, in Notation-Convention \ref{notconventionB}) and suppose the  Lyndon pair for the
enveloping algebra $U$ is exactly $(N,W)= (L(m),W(m))$, \ref{freeNilp_eq0}.
Then $J =([W(m)])_{Lie}$
\end{corollary}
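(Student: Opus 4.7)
The plan is to establish the two inclusions $([W(m)])_{Lie} \subseteq J$ and $J \subseteq ([W(m)])_{Lie}$ separately: the first by a nilpotency argument, the second by a dimension count that promotes the first inclusion to equality.

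For the forward inclusion I would invoke Proposition \ref{VIPproposition} (\ref{VIPproposition1}). By hypothesis the Lyndon pair for $U$ is exactly $(L(m), W(m))$, so in particular $N \bigcap L_{m+1} = \emptyset$ while $N \bigcap L_{m} \neq \emptyset$, i.e.\ $s = m+1$ is the minimal integer such that $N$ contains no Lyndon words of that length. Proposition \ref{VIPproposition} (\ref{VIPproposition1}) then tells us that $\mathfrak{g} = \Lie(X)/J$ is a nilpotent Lie algebra of nilpotency class $m$, so every $m$-left-nested Lie monomial of degree $m+1$ lies in $J$; equivalently $J_{m+1} \subseteq J$ in the notation of \ref{J_m_notation}. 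Corollary \ref{freeNilp_cor} (2) identifies $J_{m+1}$ with the monomial Lie ideal $J(m) = ([W(m)])_{Lie}$, and so
\[
([W(m)])_{Lie} \subseteq J,
\]
yielding a canonical surjective Lie-algebra homomorphism $\pi\colon \mathfrak{L}(m) = \Lie(X)/([W(m)])_{Lie} \twoheadrightarrow \Lie(X)/J = \mathfrak{g}$.

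To upgrade this to an equality I would compare dimensions. By Remark \ref{FactsLaRam} (2), the set $[N] = [L(m)]$ is a $K$-basis of $\mathfrak{g}$, so $\dim_K \mathfrak{g} = |L(m)|$. On the other hand Corollary \ref{freeNilp_cor} gives that $[L(m)]$ is also a basis of the free nilpotent Lie algebra $\mathfrak{L}(m)$, hence $\dim_K \mathfrak{L}(m) = |L(m)|$ as well. The surjection $\pi$ is therefore a map between finite-dimensional vector spaces of the same dimension, and is consequently an isomorphism. This forces $\ker \pi = 0$, i.e.\ $J = ([W(m)])_{Lie}$.

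The argument is short and structural, so I do not expect a genuine obstacle; the substance is packaged in the two cited results. The point worth emphasising, in the spirit of Remark \ref{Obstructionsrem}, is that in general the obstruction set $W$ of $U\mathfrak{g}$ does not determine $J$. What makes the present case rigid is that $N = L(m)$ is the \emph{largest possible} set of Lyndon atoms compatible with having no atom of length $>m$, so $\dim_K \mathfrak{g}$ already saturates the upper bound provided by $\mathfrak{L}(m)$, leaving no room for $J$ to strictly exceed $([W(m)])_{Lie}$.
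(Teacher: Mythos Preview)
Your argument is correct and is precisely the natural proof from the surrounding results; the paper itself leaves this corollary unproved, but the intended justification is exactly the dimension comparison you give: the inclusion $([W(m)])_{Lie}\subseteq J$ via nilpotency (Proposition~\ref{VIPproposition}\,(\ref{VIPproposition1}) together with Corollary~\ref{freeNilp_cor}\,(2)), followed by $\dim_K\mathfrak{g}=|L(m)|=\dim_K\mathfrak{L}(m)$ from Remark~\ref{FactsLaRam} and Corollary~\ref{freeNilp_cor}. Your closing remark on why this ``extreme'' case is rigid is also well put and matches the spirit of Remark~\ref{Obstructionsrem}.
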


\subsection{The $\mathbb{Z}^2$-graded Filiform Lie algebras, basic facts}
\label{MonFiliformSec0}
Filiform Lie algebras have been actively studied and numerous works on their algebraic and geometric properties can
 be found in the literature.
As an application of our approach (and for completeness) we shall show that each of the classical Filiform algebras
$\Lcal_n$ and  $\Qcal_n$
has a canonical presentation as a
naturally  $\mathbb{Z}^2$ -graded standard monomial Lie algebra defined by Lyndon words.
 Similar presentation is given for the infinite Filiform Lie algebras $\mathfrak{M}_0$, but, of course, its set of defining Lyndon-Lie monomial relations is infinite.

Recall that \emph{a finite dimensional filiform  Lie algebra}  is a nilpotent Lie algebra $\mathfrak{F}$ whose nil index is maximal, that is
$\mathfrak{F}$ has nil index $n$, (nilpotency class $n$), where $\dim \mathfrak{F}= n+1$. It is well-known that, \emph{up to isomorphism}, there are
two families of filiform Lie algebras of rank 2.

\[\{\mathcal{L}_n:\;\; n \geq 2 \}\;\; \text{and} \;\; \{\mathcal{Q}_n:\;\; n \geq 5, \; n\;\; \text{odd} \}. \]

Both algebras $\mathcal{L}_n$  and $\mathcal{Q}_n$ have dimension $n+1$. Each of them has a basis
 $X_0, X_1, \cdots , X_n$,
 their nonzero brackets of basis elements are given below.
 \begin{equation}
\label{filiformL_n}
\begin{array}{ll}
 \Lcal_n: \quad &[X_0, X_j]= X_{j+1}, \; \text{for}\; 1 \leq j \leq n-1.
 \end{array}
 \end{equation}

 \begin{equation}
\label{filiformQ_n}
\begin{array}{ll}
 \Qcal_n: \quad &[X_0, X_j]= X_{j+1}, \; \text{for}\; 1 \leq j \leq n-1\\
                &[X_j, X_{n-j}]= (-1)^i X_{n}, \; \text{for}\; 1 \leq j \leq n-1 \\
                &n\geq 5, \; n \; \text{is odd}.
                \end{array}
 \end{equation}

 Infinite Filiform Lie algebra $\mathfrak{M}_0$, \cite{FialWag}, is presented via generators $e_i, i \geq 1$, and relations $[e_1, e_i]= e_{i+1}, \forall i\geq 2.$
 This algebra was first studied in \cite{M.Smith}, where was shown that its enveloping $U$ is an integral domain with a subexponential growth, not bounded by any polynomial.

Note that if $\mathfrak{g}$ is a graded Lie algebra generated by generators of degree $1$,  $\mathfrak{g}_1 = Span X$,
then the data
$\dim \mathfrak{g} = n+1; \; \mathfrak{g} \in \mathfrak{N}_n$
implies
\begin{equation}
\label{filiformeqA}
(i)\;\;  2 =\dim \mathfrak{g}_1 = |X|; \quad (ii) \;\; \dim \mathfrak{g}_k = 1,  \; 2 \leq k \leq n.
\end{equation}

At the same time we have a result on "the minimality of the order $|W|$ of $W$ independently of the context of Lie algebras.
Recall that if $(N,W)$ is a Lyndon pair in $X$, with  $|N|=d$,
Theorem I implies that
$W$ is a finite set with $d-1 \leq |W|
\leq \frac{d(d-1}{2}$.  Theorem  \ref{Minim_W_Theor1} shows that \emph{if $N$ is connected} then the pure \emph{numerical datum}
 \begin{equation}
\label{numericaleq}
 |N|=d, \quad |W|= d-1
\end{equation}
 determines uniquely (up to isomorphisms of monomial algebras $A_W$): (i) the order of $X$, $\;|X|=2$, and (ii) the Lyndon pair $(N, W)$ in $X^{+}$
 with $|W|= d-1$:
 \begin{equation}
\label{NWFiliformeq}
\begin{array}{ll}
N &= \{x < xy <  xyy < \cdots <xy^{d-3}< xy^{d-2}< y\}.\\
W &= \{xy^{i}xy^{i+1}\mid 0 \leq i \leq d-3 \}\bigcup\{xy^{d-1}\}.
\end{array}
\end{equation}

\begin{corollary}
\label{MinimalWCor}
Let $X = \{x< y\}$, let $d\geq 2$ be an integer, and let
 $(N,W)$ be the Lyndon pair (\ref{NWFiliformeq}).
 Then
\begin{enumerate}
\item
The set of Lie monomials $[W]$ is a Lyndon-Shirshov Lie basis of the ideal $J= ([W])_{Lie}$, so
$[N]$ is a $K$-basis of $\mathfrak{g}$, and $W$ is the set of obstructions of the enveloping algebra $U\mathfrak{g}$.
\item
The 2-generated monomial Lie algebra $\mathfrak{g} = Lie(X)/ J$ is isomorphic to the standard  filiform Lie algebra $\Lcal_{d-1}$ of dimension $d$ and  nilpotency class
$d-1$.
\item
In particular, the class
     $\mathfrak{C} (X, W)$ contains the enveloping $U =
     U\mathfrak{g}$ which is an AS-regular algebra of global dimension $d$.
     \end{enumerate}
\end{corollary}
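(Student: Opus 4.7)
The plan is to deduce parts~(1), (2), (3) in order, using the general results of Section~\ref{Sec.MonLiealgebras} and an explicit identification of $\mathfrak{g}$ with $\Lcal_{d-1}$. For~(1), I would apply Proposition~\ref{SimplifyProp}(3)(c). The Lyndon pair $(N,W)$ is finite with $|N|=d$, and $N$ is connected since it contains a Lyndon word of each length $1\leq s\leq d-1$. The key combinatorial observation is the $\mathbb{Z}^{2}$-multidegree distribution: every atom in $N$ has $x$-degree at most $1$, whereas every $w\in W$ begins with $x$ and has $x$-degree equal to $1$ or $2$. For any $2$-chain $\omega = hbt$ with $u=hb,\,v=bt\in W$, the overlap $b$ is a proper suffix of $u$ and a proper prefix of $v$; since $v$ begins with $x$, so does $b$, hence $b$ has $x$-degree at least~$1$. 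A direct case analysis of the overlap patterns among pairs of elements of $W$ (namely $xy^{i}xy^{i+1}$ with $xy^{i+1}xy^{i+2}$; $xy^{i}xy^{i+1}$ with $xy^{d-1}$; and $xy^{d-3}xy^{d-2}$ with $xy^{d-1}$) shows that in each case $\omega$ has $x$-degree at least~$2$. Hence no multigraded component $L_{\alpha}$ contains both an atom of $N$ and a $2$-chain, condition~(c) of Proposition~\ref{SimplifyProp}(3) is satisfied, and $[W]$ is a Gr\"{o}bner--Shirshov basis of $J$. By Proposition~\ref{theor_easy_vip}, $[N]$ is a $K$-basis of $\mathfrak{g}$ and $W$ is the obstruction set of $U\mathfrak{g}$, establishing~(1).

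For~(2), I would define $\phi\colon \Lcal_{d-1}\to\mathfrak{g}$ by $\phi(X_0) = -[y]$ and $\phi(X_k) = [xy^{k-1}]$ for $1\leq k\leq d-1$, and verify that the defining relations of $\Lcal_{d-1}$ are respected. Since the standard factorization of $xy^k$ is $(xy^{k-1},y)$, one has $[xy^k]=[[xy^{k-1}],[y]]$, so for $1\leq k\leq d-2$,
\begin{equation*}
\phi([X_0,X_k]) \;=\; [-[y],[xy^{k-1}]] \;=\; [[xy^{k-1}],[y]] \;=\; [xy^k] \;=\; \phi(X_{k+1}),
\end{equation*}
while for $k=d-1$ the bracket $[xy^{d-1}]$ vanishes in $\mathfrak{g}$ because $xy^{d-1}\in W$, matching $[X_0,X_{d-1}]=0$ in $\Lcal_{d-1}$. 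For $1\leq i<j\leq d-1$, the standard factorization of the Lyndon word $xy^{i-1}xy^{j-1}$ is $(xy^{i-1},xy^{j-1})$, so $[[xy^{i-1}],[xy^{j-1}]]=[xy^{i-1}xy^{j-1}]$ is a homogeneous Lie element of multidegree $(2,i+j-2)$, which must vanish in $\mathfrak{g}$ by~(1) because no atom of $N$ has $x$-degree~$2$. Hence $\phi$ is a well-defined Lie homomorphism sending a basis of $\Lcal_{d-1}$ to (plus or minus) the basis $[N]$, and is therefore an isomorphism. Part~(3) then follows at once: since $\dim\mathfrak{g}=|N|=d<\infty$, Proposition~\ref{UgProp} gives that $U=U\mathfrak{g}$ is an Artin--Schelter regular algebra in $\mathfrak{C}(X,W)$ with $\gldim U=d$.

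The main obstacle I anticipate is the vanishing of $[X_i,X_j]$ for $j>i+1$ in part~(2), which cannot be read off directly from the generators of $J$: it is forced only indirectly, via part~(1) together with the multidegree argument, which shows that these bracketed Lyndon monomials land in multigraded components of $\mathfrak{g}$ that contain no basis elements. Once this subtlety is handled, the remaining verifications are mechanical.
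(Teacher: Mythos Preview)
Your proposal is correct and follows essentially the same route as the paper. For part~(1) the paper uses exactly your multidegree argument via Proposition~\ref{SimplifyProp}(3)(c): every atom in $N$ has $x$-degree $\leq 1$, while every $2$-chain has $x$-degree $\geq 2$, so no $L_{\alpha}$ contains both. Your case analysis of overlap patterns is unnecessary, since any $2$-chain $\omega = hbt$ with $h,t\in X^{+}$ already has an $x$ at position~$1$ (because $u=hb\in W$ starts with $x$) and a second $x$ at position $|h|+1>1$ (because $v=bt\in W$ starts with $x$); this is all the paper records. For part~(2) you give an explicit isomorphism with $\Lcal_{d-1}$, whereas the paper simply asserts the identification; your verification is fine, and your handling of the ``obstacle'' (vanishing of $[[xy^{i-1}],[xy^{j-1}]]$ via the multidegree-$2$ component being zero) is exactly the right way to close that gap. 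Part~(3) is identical in both.
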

\begin{proof}
Each two-chain $\omega$ on $W$ has multi-degree $\deg (\omega) =\alpha =(k,j)$, where $k \geq 2$. But $L_{\alpha}\bigcap N = \emptyset$, hence by Proposition
 \ref{SimplifyProp} part (\ref{SimplifyPropc})
 $[W]$ is a
 Lyndon-Shirshov basis of the Lie ideal $J= ([W])_{Lie}$. This implies that $[N]$ is a $K$ basis of $\mathfrak{g}$, and $W$ is the set of obstructions of the enveloping algebra $U= K\asX/([W])_{ass}$.
 Clearly, $\mathfrak{g}\cong \Lcal_{d-1},$ the standard filiform Lie algebra of dimension $d$, and nilpotency class $d-1$.
 \end{proof}
\subsection{Proof of Theorem III}
\label{proof3}
\begin{proof3}
(1). Theorem \ref{Minim_W_Theor1} proves the equivalence of conditions (1.a), (1.b), (1.c).
In this case the Lyndon pair $(N,W)$ is given by (\ref{NWFiliformeq}), hence the hypothesis of Corollary
\ref{MinimalWCor} is satisfied. This implies that
the set of Lie monomials $[W]$ is a Lyndon-Shirshov Lie basis of the ideal $J= ([W])_{Lie}$ in $\Lie(X)$  so
$\mathfrak{g} = Lie(X)/ ([W])_{Lie} = \mathfrak{g}_W$ is isomorphic to the standard  filiform Lie algebra $\Lcal_{d-1}$ of dimension $d$ and  nilpotency class
$d-1$.
In particular, the class
     $\mathfrak{C} (X, W)$ contains the enveloping $U =
     U\Lcal_{d-1}$ which is an Artin-Schelter regular algebra of global dimension $d$. This proves (1)

 (2). The equivalences of conditions (2.a), (2.b) and (2.c) follow from \cite{GIF}, Theorem B (2).
 In this case $n = d$ and $W = \{x_i x_j \mid 1 \leq i < j \leq n\}$. The class $\mathfrak{C}$ contains all binomial skew polynomial rings introduced and studied by the author,
 see \cite{GI96}. Each of them is an Artin-Schelter regular PBW algebra of global dimension $n$, see \cite{GIVB}, and also  \cite{GI12}, and defines in a natural way a square-free solution of the Yang-Baxter equation.
 When $|X|= 8$ more than $2400$ (non-isomorphic) square-free solutions were found by Schedler via a computer program.
\end{proof3}

\subsection{The $\mathbb{Z}^2$-graded Filiform Lie algebras presented as standard monomial Lie algebras defined by Lyndon words}
\label{MonFiliformSec}
In this subsection, as an application of our approach (and for completeness) we show that each of the classical Filiform algebras
$\Lcal_n$ and  $\Qcal_n$
has a canonical presentation as a
naturally  $\mathbb{Z}^2$ -graded standard monomial Lie algebra defined by Lyndon words.
\begin{theorem}
\label{filiformProp}
Let $(N,W)$ be a Lyndon pair, in the alphabet $X = \{x <y\}$.
Let $J= ([W])_{Lie}$, and $I$ be the corresponding ideals, and
$\mathfrak{g} = Lie(X)/J, \; U = U\mathfrak{g}= K\asX /I$.
Assume that $N$ is connected, with order $|N|\geq 4$, and
\begin{equation}
\label{narroweq}
|N\bigcap L_m| \leq 1, \forall  m \geq 2.
\end{equation}
Without loss of generality we may assume that $xyy\in N$.

The following conditions hold.
\begin{enumerate}
\item
$N$ is infinite if and only if
\begin{equation}
\label{InfiniteNeq}
\begin{array}{l}
N = \{xy^k \mid k \geq 0\}\bigcup \{y\},\quad\text{or equivalently,}\quad
W = \{xy^kxy^{k+1} \mid k \geq 0\}.
\end{array}
\end{equation}
$[W]$ is a Gr\"{o}bner-Shirshov basis of $J$, so $\mathfrak{g}= \mathfrak{g}_W$ is a standard monomial Lie algebra with a $K$-basis $[N]$, and
$\mathfrak{g} \simeq \mathfrak{M}_0$, the infinite-dimensional Filiform algebra, see \cite{FialWag}.
\item
$N$ is a finite set of order $d$ if and only if one of the conditions (a), or (b) is in force.
\begin{enumerate}
\item
\begin{equation}
\label{finiteNeq1}
\begin{array}{l}
 N = \{xy^k \mid 0 \leq k \leq d-2\}\bigcup \{y\} = N(\Lcal_{d-1})\\
 W = \{xy^kxy^{k+1} \mid  0 \leq k \leq d-2 \}\bigcup \{xy^{d-1}\}.
\end{array}
\end{equation}
In this case $\mathfrak{g}=\mathfrak{g}_W$ is a standard monomial algebra with a $K$-basis $[N]$, moreover,
 $\mathfrak{g} \cong \Lcal_{d-1}$, the standard filiform Lie algebra of dimension $d$ and nilpotency class $d-1$.
\item One has $d = 2m \geq 6$ and $(N, W) = (N_{2m}, W_{2m})$, where
\begin{equation}
\label{filiformeq2}
\begin{array}{ll}
N_{2m} &= \{xy^j \mid 0 \leq j \leq 2m-3\}\bigcup \{xy^{m-2}xy^{m-1}\}\bigcup \{y\} = N(\Qcal_{2m-1})\\
&\\
W_{2m} = &\{xy^{2m-2}, \;  (xy^{m-2})(xy^{m-2}xy^{m-1}),\;
           (xy^{m-2}xy^{m-1})(xy^{m-1})\}\\
         &\\
        &\bigcup\{xy^kxy^{k+1}\mid 0 \leq k \leq 2m-4, \; \text{and} \; k \neq m-2 \}.
         \end{array}
\end{equation}
Again, $\mathfrak{g}=\mathfrak{g}_W$ is a standard monomial algebra with a $K$-basis $[N]$,
 $\mathfrak{g} \cong \Qcal_{2m-1}$, the second type Filiform Lie algebra of dimension $d= 2m\geq 6$ and nilpotency class $d-1$.
\end{enumerate}
\end{enumerate}
\end{theorem}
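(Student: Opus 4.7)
The plan is to carry out a combinatorial analysis of the possible shapes of $N$ under narrowness, connectedness, closure under Lyndon subwords, and the standard--monomial hypothesis (i.e., that $[W]$ is a Gröbner--Shirshov basis of the Lie ideal $J$, which is implicit in the identification $\mathfrak{g} = \mathfrak{g}_W$ having $[N]$ as a basis), and then to identify $\mathfrak{g}$ with the appropriate classical algebra. I would first establish the block structure of atoms: since $xyy\in N$ and $|N\cap L_3|\leq 1$, the Lyndon word $xxy$ is not in $N$, and as its proper Lyndon factors $x,y,xy$ all lie in $N$, the word $xxy$ is minimal in $L\setminus N$, so $xxy\in W$. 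Consequently every atom of length $\geq 2$ avoids $xxy$, which for Lyndon words in a two-letter alphabet is equivalent to avoiding $xx$; each such atom therefore factors uniquely as $xy^{a_1}\,xy^{a_2}\cdots xy^{a_s}$ with $a_i\geq 1$ and $(a_1,\ldots,a_s)$ a Lyndon word on the positive integers. Call an atom \emph{straight} if it is of the form $xy^{k-1}$, and \emph{exceptional} otherwise.

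Next I would dichotomize according to whether an exceptional atom occurs. If $N$ contains no exceptional atom, then by narrow each $N\cap L_k$ is either empty or $\{xy^{k-1}\}$; combined with connectedness and $|N|\geq 4$ this yields $N=\{xy^j:0\leq j\leq M\}\cup\{y\}$ with $M\in\{2,3,\ldots\}\cup\{\infty\}$, producing (\ref{InfiniteNeq}) if $M=\infty$ and (\ref{finiteNeq1}) with $d=M+2$ otherwise. Otherwise, let $k$ be the smallest length carrying an exceptional atom $v=xy^{a_1}\cdots xy^{a_s}$, $s\geq 2$. I would show, using closure plus narrow plus the minimality of $k$, that $s=2$ and $v=xy^{a}xy^{a+1}$: if $v=xy^{a}xy^{b}$ with $b\geq a+2$, then the Lyndon factor $xy^{a}xy^{a+1}$ of $v$ lies in $N$ by closure and is exceptional of smaller length, contradicting minimality; if $s\geq 3$, some two-block sub-product $xy^{a_i}xy^{a_j}$ with $a_i<a_j$ is a Lyndon proper factor of $v$, giving the same kind of contradiction. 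Setting $m=a+2$ yields $v=xy^{m-2}xy^{m-1}$ of length $2m-1$ with $m\geq 3$.

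The hard part is the next step: ruling out further exceptional atoms at lengths $\geq 2m$. Closure and narrow alone do not suffice, since one can combinatorially enlarge $N$ by adjoining, for example, $xy^{c}xy^{d}$ with $c<d$ and $c+d=2m-2$ while still respecting narrow and closure. The Gröbner--Shirshov hypothesis is essential here, and this is where I expect the main obstacle. I would carry out the composition analysis on the $2$-chain $\omega=xx\,y^{2m-2}$ arising from the pair $xxy,\,xy^{2m-2}\in W$, which lies in the multi-degree component $(2,2m-2)$; a direct computation in the free associative algebra (extending the $m=3$ computation where $(xxy,xy^{4})_{xx y^{4}}=2\,[xyxy^{3}]$) shows that the composition reduces modulo $[W]$ to a nonzero element of $\Span_{K}\{[xy^{c}xy^{d}]:c+d=2m-2,\,c<d,\,c\geq 1\}$. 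Solvability of this composition then forces every such Lyndon word to lie outside $N$, which by narrow eliminates any candidate atom of length $2m$; iterating the argument at higher multi-degrees extinguishes all potential atoms of length $>2m-1$, so $m(N)=2m-1$ and $N=N_{2m}$. The explicit form $W=W_{2m}$ is then obtained by listing the consecutive-atom products in the lexicographically ordered chain $x<xy<\cdots<xy^{m-2}<v<xy^{m-1}<\cdots<xy^{2m-3}<y$ via Proposition \ref{N_W_Prop} and adjoining the terminator $xy^{2m-2}$.

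Finally I would verify in each case that $[W]$ is indeed a Gröbner--Shirshov basis of $J$ (using Proposition \ref{SimplifyProp} for the filiform cases, where every $2$-chain sits in a multi-degree component whose strictly smaller Lyndon words do not meet $N$, and by a direct composition check for $\mathcal{Q}_{2m}$) and identify $\mathfrak{g}$ with the target algebra through Proposition \ref{theor_easy_vip}(3). Computing $[[xy^{i}],[xy^{j}]]$ via the standard Lyndon bracketing recovers the relations (\ref{filiformL_n}) for $\mathcal{L}_{d-1}$ and their infinite analogue for $\mathfrak{M}_0$; for $\mathcal{Q}_{2m}$ one matches the non-trivial bracket $[X_{j},X_{n-j}]=(-1)^{j}X_{n}$ with $n=2m-1$ by identifying $X_{n}=\pm[xy^{m-2}xy^{m-1}]$ and expanding the resulting bracket in the basis $[N_{2m}]$ using the same free-algebra computations that justified the composition reduction in the previous paragraph.
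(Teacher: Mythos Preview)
Your approach is essentially the paper's own: the paper proves the exceptional case via Lemma \ref{filiformLemma}, which (under the standing hypothesis that $[W]$ is a Gr\"{o}bner--Shirshov basis, exactly as you note) locates the shortest exceptional atom $xy^{s-1}xy^{s}$, observes $xy^{2s}\in W$, and then analyses the very same $2$-chain $\omega=xxy^{2s}$ (your $xxy^{2m-2}$) to conclude $xy^{s-1}xy^{s+1}\notin N$.

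Two small points sharpen your write-up. First, the paper's ``detailed computation'' shows more than a span statement: modulo the already-known obstructions $xy^{c}xy^{c+1}\in W$ for $c<m-2$, the composition reduces to a \emph{single} nonzero multiple $c\,[xy^{m-2}xy^{m}]$. Your phrasing ``solvability forces every such Lyndon word to lie outside $N$'' overshoots; what solvability actually forces is that this one surviving term cannot be normal, i.e.\ $xy^{m-2}xy^{m}\notin N$. Combined with the closure argument you already gave (any other $xy^{c}xy^{d}$ with $c+d=2m-2$ would contain a shorter exceptional factor), this empties $N\cap L_{2m}$. Second, no iteration at higher multi-degrees is needed: once $N\cap L_{2m}=\emptyset$, connectedness immediately gives $m(N)=2m-1$, and your description of $W_{2m}$ and the identifications with $\mathcal{L}_{d-1}$, $\mathfrak{M}_0$, $\mathcal{Q}_{2m-1}$ then proceed exactly as in the paper.
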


\begin{lemma}
\label{filiformLemma}
In notation and assumptions as in the theorem, let $d=2m \geq 6$.
Let $(N, W)$ be a Lyndon pair, where $N = \{x =l_1<
l_2 < \cdots < l_{d}=y\}$  satisfies:
\begin{equation}
\label{filiformeq0}
|N \bigcap L_k|= 1,   \; \forall k, 2\leq k \leq d.
\end{equation}
Suppose $xxy \in W$. The following are equivalent.
\begin{enumerate}
\item  $[W]$ is a Lyndon-Shirshov Lie basis of the ideal $J$, and  $N$ contains an atom different from $xy^k$, for $k \geq 1$.
\item
\begin{equation}
\label{filiformeq1}
N = N_{2m} = \{xy^k\mid 0 \leq k\leq 2m-3\}\bigcup \{y\}\bigcup
\{xy^{m-2}xy^{m-1}\},
\end{equation}
and $[N]$ is a $K$-basis of $\mathfrak{g}$.
\item
The Lie algebra $\mathfrak{g}$ is generated by the set $X= \{x < y\}$, $W=W_{2m}$
and its Lyndon-Lie monomial relations $[W]$ form a Gr\"{o}bner-Shirshov basis, where
\begin{equation}
\label{filiformeq2}
\begin{array}{ll}
W = W_{2m} = & \{xy^kxy^{k+1}\mid k \neq m-2,  0 \leq k  \leq 2m-4\}\bigcup \{xy^{2m-2}, xy^{m-2}xy^{m}\})\\
          &\bigcup\{(xy^{m-2})(xy^{m-2}xy^{m-1}),\;
           (xy^{m-2}xy^{m-1})(xy^{m-1})\}.
\end{array}
\end{equation}
\end{enumerate}
\end{lemma}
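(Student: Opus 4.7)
The plan is to verify the three implications $(2)\Leftrightarrow(3)$, $(2)\Rightarrow(1)$, and $(1)\Rightarrow(2)$, with the third being the substantive step, handled via the classification of $2$-generated filiform Lie algebras recalled in Subsection~\ref{MonFiliformSec0}. The equivalence $(2)\Leftrightarrow(3)$ is essentially formal: the canonical bijection $N\leftrightarrow W=W(N)$ of Proposition~\ref{N-WProp0}, combined with Proposition~\ref{theor_easy_vip} (which makes ``$[N]$ is a $K$-basis of $\mathfrak{g}$'' and ``$[W]$ is a Gr\"obner--Shirshov basis of $J$'' equivalent), reduces the equivalence to a direct combinatorial check that $W(N_{2m})=W_{2m}$. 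The implication $(2)\Rightarrow(1)$ is then immediate, since $xy^{m-2}xy^{m-1}\in N_{2m}$ is visibly not of the form $xy^k$.

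For $(1)\Rightarrow(2)$ the argument would proceed in two phases. First, extract the Lie-theoretic content: by Proposition~\ref{theor_easy_vip}\,(\ref{theor_easy_vip3}), $\dim_K \mathfrak{g} = |N| = 2m$, and combined with the connectedness of $N$ and the hypothesis that each length $k\geq 2$ carries at most one atom (inherited from Theorem~\ref{filiformProp}), $|N|=2m$ forces one atom at each length $2,3,\ldots,2m-1$, so the maximum atom length equals $2m-1$. By Proposition~\ref{VIPproposition}\,(\ref{VIPproposition4}) the nil-class of $\mathfrak{g}$ coincides with this maximum, making $\mathfrak{g}$ a $2$-generated filiform Lie algebra of dimension $2m$ and nil-index $2m-1$. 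The classification then gives $\mathfrak{g}\cong\Lcal_{2m-1}$ or $\mathfrak{g}\cong\Qcal_{2m-1}$ (the latter available since $2m-1\geq 5$ is odd); and by Corollary~\ref{MinimalWCor} the atom set of $U\Lcal_{2m-1}$ consists only of $\{xy^k:0\leq k\leq 2m-2\}\cup\{y\}$, which is incompatible with the non-$xy^k$ atom hypothesis of (1). Hence $\mathfrak{g}\cong\Qcal_{2m-1}$.

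In the second phase I would compute the Lyndon atom set of $U\Qcal_{2m-1}$ directly from its presentation. Setting $X_0=y$, $X_1=x$ and unfolding $[X_0,X_j]=X_{j+1}$ identifies $X_j=\pm[xy^{j-1}]$ for $1\leq j\leq 2m-2$, yielding the atoms $\{xy^k:0\leq k\leq 2m-3\}\cup\{y\}$. The extra relation $[X_{m-1},X_m]=(-1)^{m-1}X_{2m-1}$ furnishes a new basis element in bidegree $(2,2m-3)$, whose unique Lyndon representative in $N$ I expect to be $xy^{m-2}xy^{m-1}$: the competing Lyndon words $xy^{j-1}xy^{2m-j-2}$ for $2\leq j\leq m-2$ of the same bidegree each contain a subword of the form $xy^k xy^{k+1}\in W_{2m}$ and are therefore obstructed. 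This yields $N=N_{2m}$, completing the proof. The main obstacle I anticipate is precisely this final verification --- ranking the Lyndon candidates of bidegree $(2,2m-3)$ under the $\prec$-ordering and checking divisibility against the full obstruction set $W_{2m}$ to isolate the surviving atom.
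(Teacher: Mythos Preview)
Your treatment of $(2)\Leftrightarrow(3)$ and $(2)\Rightarrow(1)$ matches the paper's. For $(1)\Rightarrow(2)$, however, your route diverges from the paper's and carries a real gap.

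The paper never invokes the classification of filiform algebras for this implication. It argues combinatorially inside $N$: the shortest atom $u\in N$ not of the form $xy^k$ is shown to be $u=xy^{s-1}xy^{s}$ for some $s\ge 1$ (using that $N$ is closed under Lyndon subwords and that shorter atoms are all $xy^j$); since $|N\cap L_{2s+1}|=1$ this forces $xy^{2s}\in W$. The hypothesis that $[W]$ is a Gr\"obner--Shirshov basis is then used \emph{directly}: the $2$-chain $\omega=xxy^{2s}=(xxy)y^{2s-1}=x(xy^{2s})$ yields a composition of overlap $[xxy^{2s}]_l-[xxy^{2s}]$, and an explicit computation shows this reduces modulo $J$ to a nonzero multiple of $[xy^{s-1}xy^{s+1}]$. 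Solvability then forces $xy^{s-1}xy^{s+1}\in W$, whence $N\cap L_{2s+2}=\emptyset$, and connectedness pins down $2s+1=2m-1$, so $N=N_{2m}$.

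Your strategy --- deduce $\mathfrak g\cong\Qcal_{2m-1}$ from the rank-$2$ classification, then read off $N$ --- founders on the fact that the Lyndon pair $(N,W)$ is attached to the \emph{presentation} $\mathfrak g=\Lie(x,y)/J$, not to the isomorphism class of $\mathfrak g$. An abstract (even $\ZZ$-graded) isomorphism $\mathfrak g\cong\Qcal_{2m-1}$ does not tell you that $x,y$ correspond to the standard generators $X_1,X_0$; a different linear identification would produce a different kernel in $\Lie(x,y)$ and hence a different atom set. Your second phase simply \emph{declares} $X_0=y,\ X_1=x$ and computes --- but that identification is precisely what must be justified. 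The same defect infects your elimination of $\Lcal_{2m-1}$: Corollary~\ref{MinimalWCor} describes the atom set of $\Lcal_{2m-1}$ in its standard presentation, and you have not shown that every $\ZZ^2$-graded presentation with $xxy\in W$ yields that same set. The repair is possible but not free: one must argue that the $\ZZ^2$-grading on $\mathfrak g$ (from the bidegree in $x,y$) matches the unique maximal-torus grading on $\Qcal_{2m-1}$ up to the swap $x\leftrightarrow y$, and then exclude the swap via $xxy\in W$. None of this is in your outline, and the paper's composition computation sidesteps it entirely.
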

\begin{proof}
By hypothesis $xxy\ W$, so $xyy\in N$.
 It is clear that $(N_{2m}, W_{2m},)$ given in (\ref{filiformeq1}) and (\ref{filiformeq2}), respectively, is a Lyndon pair.
The implication $(2) \Longrightarrow (1)$ is obvious.

We shall prove that for $W= W_{2m}$, the bracketing $[W]$ is a Gr\"{o}bner-Shirshov basis of the Lie ideal $J= ([W])_{Lie}$, we use Proposition \ref{SimplifyProp}.
Indeed, there are exactly three-types of two-chains $\omega$ on $W$. (a) $\omega \in L_{\alpha}$, where $\alpha= (2, k), k\geq 2m-2$. Hence $|\omega|\geq 2m > max \{|l|\mid l \in N\}= 2m-1$,
and $N \bigcap L_{\alpha} = \emptyset$;  (b)   $\omega \in L_{\alpha}$ where $\alpha= (3, s), s \geq 3$, then, clearly,
$N \bigcap L_{\alpha} = \emptyset$;
(c) $\omega \in L_{\alpha}$ where $\alpha= (4, s), s \geq 2m-1$, hence $N \bigcap L_{\alpha} = \emptyset$ again. It follows from Proposition \ref{SimplifyProp} part (1) that in each of the cases (a), (b), and (c)  the corresponding composition of overlap is solvable, therefore $[W]$  is a Gr\"{o}bner-Shirshov basis of the Lie ideal $J$.
Now the equivalence $(2) \Longleftrightarrow (3)$ is clear.

(1) $\Longrightarrow$ (2).
By the hypothesis of (1), $[W]$ is a Gr\"{o}bner- Shirshov basis of the ideal $J$. Moreover $N$ contains an atom of multi-degree $(k, s)$, $k \geq 2, s \geq 1$
Since $xxy \in W$, and $N$ is closed under taking Lyndon subwords, it follows that $N$ contains a subword of
the shape $xy^kxy^m\cdots$,  with $k, m \geq 1$.
Let $u$ be the shortest word in $N$ with this property. Clearly, $u = xy^{s-1}xy^{s}\in N$, for some $s \geq 2.$ Moreover,
$N$ contains a convex subchain
\begin{equation}
\label{filiformeq3}
l_1= x <l_2 = xy<\cdots l_s =  xy^{s-1} <l_{s+1}=xy^{s-1}xy^{s}.
\end{equation}
In particular $xy^{s-1}xy^{s}\in N$, with $|xy^{s-1}xy^{s}|= 2s+1$, and since $N$ contains no other atom of length $2s+1$, one has
\begin{equation}
\label{filiformeq3}
xy^{2s} \in W, \quad xy^{2s+t} \; \text{is not in} \; N, \forall t \geq 0.
\end{equation}
We claim that  $u=l_{s+1}=xy^{s-1}xy^{s}$ is the longest Lyndon atom in $N$, and $s = m$.
Note first that $|u|=|l_{s+1}|=|xy^{s-1}xy^{s}| = 2s+1$, and
our assumption (\ref{filiformeq0}) and (\ref{filiformeq3}) give
(a) all elements  $l \in N$ with $2 \leq |l|< 2s+1$
have the shape $l = xy^{k}, 1 \leq k \leq 2s-1$.
This implies an inclusions of sets
\begin{equation}
\label{filiformeq4}
\{xy^{k}xy^{k+1}\mid 0 \leq k \leq s-2\}\bigcup \{xy^{2s}\}\subset W.
\end{equation}
We shall prove that $N\bigcap L_{2s+2}= \emptyset$. Indeed, $xy^{2s+1}$ is not in $N$, and there are no atoms $l \in N$ of length $|l| = 2s+2$
of multi-degree $\alpha = (3, k)$, hence
it will be enough to prove that
\begin{equation}
\label{filiformeq5}
w =xy^{s-1}xy^{s+1}\in W.
\end{equation}

It follows from (\ref{filiformeq4}) that $a=xxy, b= xy^{2s}\in W$, hence
$\omega = xxy^{2s} = (xxy)y^{2s-1}= x(xy^{2s})$ is a two-chain on $W$,  $\omega \in \Lcal(2,2s)$.
So there is a composition of overlap:
\[(xxy,xy^{2s})_{\omega}= ([xxy]y^{2s-1})-(x[xy^{2s}])= [xxy^{2s}]_l -[xxy^{2s}] \in J.\]
which must be solvable, since by assumption $[W]$ is a GS-Lie basis.
Detailed computation implies
\[ [xxy^{2s}]_l -[xxy^{2s}]\equiv  c[xy^{s-1}xy^{s+1}]  \; \text{(modulo $J$)}, \]
for some integer $c\neq 0$.
Hence $[xy^{s-1}xy^{s+1}] \in J$, and therefore $v= xy^{s-1}xy^{s+1}$ is not an atom.
All proper Lyndon subwords of $v$ are atoms, indeed these are
\[xy^{s-1}xy^{s}  \in N, xy^{k} \in N, 0 \leq k \leq s+1, y \in N,\]
therefore $l=xy^{s-1}xy^{s+1} \in W.$

This together with the information we already have (i) $xy^{2s}, xy^{s-1}xy^{s+1} \in W$ and $xy^{s-1}xy^{s}$ is the shortest Lyndon word in which $x$ occurs more than once imply that that there is no Lyndon atom $a \in N$ of length $|a|= 2s+2$.
By assumption $[W]$ is a GS-Lie basis, so by Lemma \ref{VIPproposition}
$N$ is connected, thus $N\bigcap L_t =\emptyset, t \geq 2s+2 $.

It follows that $s = m-1$, and $N$ satisfies(\ref{filiformeq1}), so $N=N_{2m}$, and therefore
$|N| = \dim \mathfrak{g}= 2m$. Clearly, $\mathfrak{g}$ is a filiform Lie algebra of nilpotency class $2m-1,$
and its defining Lyndon Lie monomial relations are $[W_{2m}]$. This proves (1) $\Longrightarrow $(2).
\end{proof}

\begin{proofTheor}\ref{filiformProp}.
Without loss of generality we have assumed that $xyy \in N$, hence $xxy \in W$.
Two cases are possible:  (i) all atoms in $N$ have the shape $xy^k, k \geq 1$, or (ii) $N$ contains an atom different from $xy^k$, for $k \geq 1$.
Assume  (i) is in force.
Then every atom $u\in N$ of length $|u| \geq 2$ has the shape $u_s=xy^s$, where $1 \leq s,$ and exactly
 one of the cases  (i.1), or (i.2) given below  is in force.
(i.1) $N$ contains all words $xy^s, s \geq 0$, then $|N \bigcap L_s| = 1, \forall s \geq 2$. Thus $N$ and  $W$ satisfy (\ref{InfiniteNeq}).
In this case $[W]$ is an (infinite) Gr\"{o}bner-Shirshov basis of the Lie ideal $J$ (this can be proved recursively),  $[N]$
is a $K$-basis of $\mathfrak{g}$ and $\mathfrak{g} \cong \mathfrak{M}_0$, the infinite Filiform Lie algebra.
The case (i.2) is explicitly described by Corollary \ref{MinimalWCor}.
 Assume (ii) is in force, then
 Lemma \ref{filiformLemma}
completes the proof of the theorem .
$\quad \Box$
\end{proofTheor}

\begin{corollary}
\label{Obstructionscor2}
Let $J$ be a  $\mathbb{Z}^n$ (non-negatively) graded Lie ideal in $\Lie(X)$, $\mathfrak{g} = \Lie(X)/J$, $\; U= U\mathfrak{g}= K \asX/ I$ (in usual notation), and suppose the  Lyndon pair for the
enveloping algebra $U$ satisfies one of the conditions
(a) $(N, W) = (N(\Lcal_{d}), W(\Lcal_{d}))$, is the pair corresponding to the Filiform Lie algebra $\Lcal_{d}$ of nilpotency class $d$ and  dimension  $d+1$, see (\ref{finiteNeq1});  or (b)
$(N, W) =  (N_{2m},  W_{2m})$,
is the Lyndon pair corresponding to  Fillifotm Lie algebra of second type,
$\Qcal_{2m-1}$, see (\ref{filiformeq1}), and (\ref{filiformeq2}).
Then the set of obstructions determines uniquely the Gr\"{o}bner-Shirshov basis of $J$ and the defining relations of $U$.
More precisely, in case (a) $J = [W(\Lcal_{d})]_{Lie}]$, $\mathfrak{g}= \Lcal_{d}$, and $U$ is the enveloping algebra of the Filiform Lie algebra  $\Lcal_{d}$, $\gldim U = d+1$;
in case (b)  $J = [W(\Qcal_{2m-1})]_{Lie}]$, $\mathfrak{g}= \Qcal_{2m-1}$, and $U$ is the enveloping algebra of the Filiform Lie algebra  $\Qcal_{2m-1}$, $\gldim U = 2m$.
\end{corollary}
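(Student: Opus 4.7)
The plan is to show that the hypothesis on the Lyndon pair rigidly determines the defining Lie ideal, from which all the claims follow by Theorem~\ref{filiformProp} and Corollary~\ref{MinimalWCor}. Set $J_W := ([W])_{Lie}$ and $\mathfrak{g}_W := \Lie(X)/J_W$. First I would invoke Corollary~\ref{MinimalWCor} in case~(a) and Theorem~\ref{filiformProp}(2)(b) in case~(b) to conclude that $[W]$ is a Gr\"obner--Shirshov basis of $J_W$. Thus $\mathfrak{g}_W$ is a standard monomial Lie algebra with $K$-basis $[N]$, so $\dim \mathfrak{g}_W = |N|$. On the other side, the given $J$ has the same Lyndon pair $(N,W)$ for its enveloping $U$, so by Remark~\ref{FactsLaRam}(2) the set $[N]$ is automatically a $K$-basis of $\mathfrak{g} = \Lie(X)/J$, yielding $\dim \mathfrak{g} = |N| = \dim \mathfrak{g}_W$.

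The crucial step is to establish the inclusion $J_W \subseteq J$, and here I would exploit the $\mathbb{Z}^n$-grading hypothesis on $J$. For each $w \in W$, let $\alpha = \deg(w) \in \mathbb{Z}^n$. A direct inspection of the explicit lists shows that $N \cap L_\alpha = \emptyset$ in every case: in~(a), each $w = xy^ixy^{i+1}$ has $\deg w = (2, 2i{+}1)$ and the last obstruction has $\deg = (1, d-1)$ (or $(1,d)$), while the multi-degrees occurring in $N$ are all of the form $(1,k)$ with $k \le d{-}2$ (resp.\ $k \le d{-}1$) together with $(0,1)$; in~(b) the same check for $W_{2m}$ shows each obstruction lives in multi-degree $(2,2k{+}1)$ with $k \ne m{-}2$, or $(3,*)$, or $(1,2m{-}2)$, none of which is the multi-degree of any atom in $N_{2m}$ (the unique atom of bi-degree $(2,*)$ being $xy^{m-2}xy^{m-1}$ of bi-degree $(2,2m{-}3)$, which is excluded). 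Since $J$ is $\mathbb{Z}^n$-graded and $[N]$ is a multi-homogeneous $K$-basis of $\mathfrak{g}$, the graded component $\mathfrak{g}_\alpha = \mathfrak{L}_\alpha/(J \cap \mathfrak{L}_\alpha)$ is spanned by the images of $[u]$, $u \in N \cap L_\alpha = \emptyset$. Hence $\mathfrak{g}_\alpha = 0$, i.e.\ $\mathfrak{L}_\alpha \subseteq J$, and in particular $[w] \in J$ for every $w \in W$. This proves $J_W \subseteq J$.

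The inclusion $J_W \subseteq J$ induces a surjection $\mathfrak{g}_W \twoheadrightarrow \mathfrak{g}$ of finite-dimensional Lie algebras of equal dimension $|N|$, which is therefore an isomorphism. Consequently $J = J_W$ and $I = (J_{ass}) = ([W]_{ass})$, so by Theorem~\ref{filiformProp} one has $\mathfrak{g} \cong \Lcal_d$ in case~(a) and $\mathfrak{g} \cong \Qcal_d$ in case~(b), and $U$ is its universal enveloping. The global-dimension assertion is then the application of Proposition~\ref{UgProp} which gives $\gldim U = |N|$.

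The main obstacle, if any, is conceptual rather than technical: one must recognise that the $\mathbb{Z}^n$-grading is the right tool, because then the very specific combinatorial shape of $W$ in the Filiform cases (every obstruction lives in a multi-degree unoccupied by atoms) forces the generators $[w]$ into $J$ for free, bypassing the general difficulty flagged in Remark~\ref{Obstructionsrem} that $W$ alone does not determine the defining relations. Without the fine grading, one would need a substitute argument producing Lie elements in $J$ of highest term $w$, which is exactly what can fail in general.
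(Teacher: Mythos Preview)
Your proof is correct, and it takes a route that differs from the paper's in an instructive way. The paper argues that each obstruction $w\in W$ is $\prec$-\emph{minimal} in its multi-degree component $L_{\alpha}$ (where $\alpha=\deg w$); hence the only monic $\mathbb{Z}^n$-homogeneous Lie element with highest monomial $w$ is $[w]$ itself, so the reduced Gr\"obner--Shirshov basis of $J$ (whose leading terms are exactly $W$) is forced to equal $[W]$. You instead verify the stronger-looking but easier-to-check fact that $N\cap L_{\alpha}=\emptyset$ for each such $\alpha$, deduce $\mathfrak{L}_{\alpha}\subseteq J$ and hence $J_W\subseteq J$, and finish by the dimension comparison $\dim\mathfrak{g}=|N|=\dim\mathfrak{g}_W$. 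Your route bypasses any direct analysis of the Gr\"obner--Shirshov basis of $J$ at the cost of the extra dimension argument; it also makes the verification in case~(b) slightly cleaner, since checking ``no atom of this multi-degree'' is more transparent than checking $\prec$-minimality for the degree-$(3,\ast)$ obstructions. One minor point: your listing of the multi-degrees of $W_{2m}$ omits $xy^{m-2}xy^{m}$ of bidegree $(2,2m-2)$ (present in the Lemma's version of (\ref{filiformeq2})), but this does not affect the conclusion since $(2,2m-2)\ne(2,2m-3)$.
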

\begin{proof}
Note that in each of the cases (a) and (b) every obstruction $w \in W$ is minimal (w.r.t. $\prec$) in its $\mathbb{Z}^n$-component, $L_{\alpha}$, where $\deg (w) =\alpha$. Hence the only monic homogeneous Lie element $f\in \Lie(X)$ with multi-degree $\alpha$ and with highest monomial $\overline{f} = w$ is $f=[w]$. Therefore in the special cases when one of the conditions (a) or (b) holds, the obstruction set $W$ determines uniquely the set of defining relations $\mathfrak{R}$ for $U$, one has $\mathfrak{R} = [W]$ .
\end{proof}

\section{More on Gr\"{o}bner-Shirshov bases of monomial Lie ideals. Applications}
\label{more.applications}
\subsection{More on Gr\"{o}bner-Shirshov bases of monomial Lie ideals}
We have a natural problem.
\begin{problem}
\emph{Given} $X$ and a Lyndon pair $(N,W)$, where $|N|=d.$
\emph{Decide} whether $[W]$ is a Gr\"{o}bner-Shirshov basis of the Lie ideal $J = ([W])_{Lie}$ in $\Lie(X)$.
\end{problem}
Proposition \ref{theor_easy_vip} (\ref{theor_easy_vip2}) implies an effective method (Procedure) to check whether $[W]$ is a  Gr\"{o}bner-Shirshov basis of the Lie ideal
$J=([W])$.  Moreover, Proposition \ref{SimplifyProp} can be and will be used to simplify the procedure by
omitting the unnecessary computation of compositions corresponding to "long" $2$-chains.

\begin{definition}
\label{standar_dpair_def}
\begin{enumerate}
 \item We say that a Lyndon pair $(N,W)$ is \emph{regular} if (i) $N$ is finite, and (ii) the class $\mathfrak{C} (X, W)$ contains the enveloping $U =
     U\mathfrak{g}$ of a graded Lie algebra $\mathfrak{g}$. In this case (i) $U$ is an s.f.p. AS-regular algebra of global dimension $d= |N|,$ and
(ii) the set $[N]$ is a $K$-basis of  $\mathfrak{g}$.
\item We say that the Lyndon pair  $(N,W)$ is \emph{standard}, if $[W]$ is a Gr\"{o}bner-Shirshov basis of the Lie ideal $J=([W])_{Lie}$ in $\Lie(X)$.
Every standard Lyndon pair $(W,N)$ with a finite set $N$ is regular, since the class $\mathfrak{C} (X, W)$ contains the enveloping algebra $U$ of the standard monomial Lie algebra $\mathfrak{g}_W = \Lie(X) /J$.
\end{enumerate}
\end{definition}

We are interested in Lyndon pairs $(N,W)$, where $N$ is a finite set of order $d$.

\begin{remark}
\label{mon-nonmonrem}
Suppose  $(N,W)$ is a Lyndon pair, and $J = ([W])_{Lie}$  is the corresponding monomial Lie ideal in $\Lie(X)$, $\mathfrak{g}= \Lie(X)/J$, $U = U\mathfrak{g}$ .
\begin{enumerate}
\item
In general, the reduced Gr\"{o}bner-Shirshov basis of the Lie ideal $J = ([W])_{Lie}$ may contain a Lie element $f \in J$,
which is not a Lie monomial, as shows Example \ref{mon-nonmonExample}.
\item If $N$ is a finite set of order $d$, then the monomial Lie ideal $J = ([W])_{Lie}$ has a finite
reduced Gr\"{o}bner-Shirshov basis and determines uniquely the corresponding Lyndon pair $(\widetilde{N}, \widetilde{W})$, where $\widetilde{W}$ is the set of obstructions for
the enveloping algebra $U = U\mathfrak{g}$. One has $\widetilde{N} \subseteq N$.
Moreover, the Lie algebra $\mathfrak{g}= \Lie (X)/J $ is finite dimensional with a $K$-basis $[\widetilde{N}]$ and $\widetilde{d}= \dim \mathfrak{g} = |\widetilde{N}| \leq
d$.
The
enveloping $U = U\mathfrak{g}$ is in the class $\mathfrak{C} (X, \widetilde{W})= \mathfrak{C} (X, \widetilde{N})$, $U$ an AS-regular algebra of $\gldim U = \widetilde{d}$.
The set $[\widetilde{W}]$ is not necessarily a Gr\"{o}bner-Shirshov basis of the Lie ideal $\widetilde{J}= ([\widetilde{W}])_{Lie}$, see Example \ref{mon-nonmonExample}.
\end{enumerate}
\end{remark}
\begin{example}
\label{mon-nonmonExample}
Let $X=\{x<y\}$ and let $(N,W= W(N))$ be the Lyndon pair, determined by the set of Lyndon atoms $N$ given below, $\mathfrak{g}= \Lie(X)/([W])$,notation as usual :
\[
\begin{array}{ll}
N=  &\{x < xxy< xxyxy <xxyxyy < xxyxyy< xxyxyyy <xxyy\\
    &< xxyyxy < xxyyxyy< xxyyy < xxyyyxy< xy<xyxyxyy\\
    &<xyxyy < xyxyyy < xyy < xyyy < y \} \\
|N|&= 18, \; \; m(N) = 7, \quad N \; \; \text{is connected}.\; \\
W& =\{xxxy, xyyyy\}\bigcup\{\text{appropriate Lyndon words of length }\;\geq 7\}.\\
\omega& = (xxxy)yyy= xx(xyyyy) \; \text{ is the unique 2-chain of length}\;|\omega| = 7.\\
      & \text{Every 2-chain $\rho$ on $W$, $\rho \neq \omega$ has length}\; |\rho| > 7.
\end{array}
\]
By Proposition \ref{SimplifyProp}, if a $2$-chain $\rho$ on $W$ has length $|\rho| >m(N)=7$, then the corresponding composition is solvable.
Thus there is only one composition of overlap which needs computation, namely the unique composition corresponding to the two-chain $\omega$ of length $\leq 7$.
One has
\begin{equation}
\label{mon-nonmoneq}
\begin{array}{ll}
 (xxxy, xyyyy)_{\omega}& =[[[[xxxy], y]y]y] - [xxxyyy] \\
                       & =[xxyxyyy]-3[xxyyxyy]-4[xxyyyxy]+3[xyxyxyy] = f
 \end{array}
\end{equation}
The Lie element $f$ is in $J$, it is presented as a linear combination of Lyndon Lie monomials, each of which is in $[N]$ (i.e. normal modulo $[W]$).
Its highest monomial (w.r.t. $\prec$) is $\overline{f}= \overline{[xxyxyyy]}= xxyxyyy$ of length $7$.
The set $\Gamma_1= \{f\}\bigcup [W]$ generates $J$.
Moreover, every composition of overlap is solvable, all
compositions of inclusion correspond to certain monomials of length $\geq 8$, and therefore are solvable.
The unique composition of overlap corresponding  to a two-chain of order $7$ is now solvable, so $\Gamma_1$ is a GS-Lie basis of $J$. Denote
\[W_0= W \setminus \{u\mid u \in W, |u| \geq 8, \overline{f} \sqsubseteq u\}\]
Then the set
\[\Gamma =\{f\}\bigcup [W_0] \]
is the reduced GS Lie basis of the Lie ideal $J$. Note that $[\widetilde{W}]= \{[xxyxyy]\}\bigcup [W_0]$ is not a GS Lie basis of $J$, thus .
Moreover the Lyndon pair $(\widetilde{N},\widetilde{W})$, where
\[
\begin{array}{l}
\widetilde{N}= N \setminus \{xxyxyyy\}, \\
\widetilde{W}= \{xxyxyy\}\bigcup (W \setminus \{u\mid u \in W, |u| \geq 8, \overline{f}= xxyxyyy \sqsubseteq u \})
\end{array}
\]
corresponds to the enveloping algebra $U = U\mathfrak{g}, \mathfrak{g}= \Lie/([W])_{Lie} = \Lie /(\Gamma)_{Lie}.$

Using the terminology from Definition
\ref{standar_dpair_def}, the Lyndon pair $(\widetilde{N},\widetilde{W})$ is regular, i.e. $\widetilde{W}$ is
the set of obstructions of the enveloping $U = U\mathfrak{g}$  (equivalently, $[\widetilde{N}]$ is a $K$-basis of $\mathfrak{g}$).
However, the set $[\widetilde{W}]$ is not a GS-Lie basis of the Lie ideal $([\widetilde{W}])_{Lie}$, hence by the same definition, part (2)  this Lyndon pair is not standard.
\end{example}

Recall that given a Lyndon pair $(N, W)$,
the connected component $N_{con}$ of $N$ is defined in Definition \ref{connect_def} (2).
\begin{proposition}
\label{connected_Prop}
Let $(N,W)$ be a Lyndon pair, $N=  \{l_1< l_2 < l_3\cdots < l_d \}$.
Let $J= ([W])_{Lie}$ be the Lie ideal in  $\Lie(X)$, generated by $[W]$, $\mathfrak{g}= \Lie(X)/J$, and let $I$ be the corresponding associative ideal in $K \asX$.
Denote by $(\widetilde{N}, \widetilde{W})$  the Lyndon pair corresponding to the enveloping algebra $U=U\mathfrak{g} = K \asX/I$.
Let $N_{con}$
be the connected component of $N$, say $N_{con}=  \{l_1= u_1< u_2 <\cdots < u_s \}\subseteq N$.
Then the following conditions hold.
\begin{enumerate}
\item
\label{connected_Prop1}
 $\widetilde{N} = N$ if and only if $[W]$ is the reduced Gr\"{o}bner-Shirshov basis of $J$.
 In this case $N$ is connected.
 \item
 \label{connected_Prop2}
 In the general case, one has $\widetilde{N} \subseteq N_{con} \subseteq N$.
\item
\label{connected_Prop3}
$\widetilde{W}$ is a finite antichain of Lyndon words, so the monomial Lie ideal $J= ([W])_{Lie}$ has a finite reduced Gr\"{o}bner-Shirshov basis $\mathfrak{R}$, and the enveloping $U =
K \asX/ I$
is standard finitely presented.
\item
\label{connected_Prop4}
Assume that $N$ is disconnected, then \[\widetilde{d}=|\widetilde{N}| \leq |N_{con}| = s < |N|=d\] and $U$ is an s.f.p. Artin-Schelter regular algebra of global dimension
$\widetilde{d} < d$.
  \item
  \label{connected_Prop5}
  The Lie algebra $\mathfrak{g}$ has nilpotency class $m(\widetilde{N}) \leq m(N_{con})$.
  \end{enumerate}
\end{proposition}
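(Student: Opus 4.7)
The plan is to derive all five parts from three ingredients: the elementary containment $\widetilde{N}\subseteq N$; the connectedness of $\widetilde{N}$ supplied by Proposition~\ref{VIPproposition}; and the characterization of Gr\"obner--Shirshov bases in terms of obstruction sets established in Proposition~\ref{theor_easy_vip}. The duality of Proposition~\ref{N-WProp0} between antichains of Lyndon words and their atom sets will translate statements about $\widetilde N$ into statements about $\widetilde W$.

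First I would record $\widetilde{N}\subseteq N$. Since $J=([W])_{Lie}$ is contained in the associative ideal $I=(J_{ass})=([W]_{ass})$, and since $\overline{[w]}=w$ for every $w\in W$ (see (\ref{Lmeq01})), we have $W\subseteq\overline{I}$. Hence $\mathfrak{N}(I)\subseteq\mathfrak{N}(W)$, and intersecting with $L$ yields $\widetilde{N}\subseteq N$. By Proposition~\ref{UgProp}, the pair $(\widetilde{N},\widetilde{W})$ is the Lyndon pair of $U=U\mathfrak{g}$ and $[\widetilde N]$ is a $K$-basis of $\mathfrak{g}$. Now for part~(\ref{connected_Prop2}), Proposition~\ref{VIPproposition}(\ref{VIPproposition3}) guarantees that $\widetilde{N}$ is connected. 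Let $k$ be the integer such that $N\cap L_s\neq\emptyset$ for $1\leq s\leq k$ and $N\cap L_{k+1}=\emptyset$, so $N_{con}=\bigcup_{1\leq s\leq k}(N\cap L_s)$. If $\widetilde{N}$ contained an atom of length $s>k$, connectedness of $\widetilde N$ would force $\widetilde{N}\cap L_{k+1}\neq\emptyset$, contradicting $\widetilde{N}\cap L_{k+1}\subseteq N\cap L_{k+1}=\emptyset$. Therefore $\widetilde{N}\subseteq N_{con}$, as required.

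For part~(\ref{connected_Prop1}), the duality of Proposition~\ref{N-WProp0} shows that $\widetilde{N}=N$ if and only if $\widetilde{W}=W$, i.e.\ if and only if $W$ is the obstruction set of $U$. By Proposition~\ref{theor_easy_vip}, equivalence (\ref{theor_easy_vip1})$\Leftrightarrow$(\ref{theor_easy_vip5}), this is equivalent to $[W]$ being a Gr\"obner--Shirshov basis of $J$; the comment following Definition~\ref{GS-basis_def} upgrades this to ``reduced''. The final clause of (\ref{connected_Prop1}) is then immediate, since $N=\widetilde{N}$ and $\widetilde{N}$ is connected by the previous paragraph.

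The remaining three parts are combinatorial consequences. For~(\ref{connected_Prop3}), $N$ has order $d$, so $N_{con}$ and hence $\widetilde{N}$ are finite of order $\widetilde{d}\leq |N_{con}|\leq d$; Proposition~\ref{UgProp}(\ref{UgProp3}) then produces a finite antichain $\widetilde{W}$ with $\widetilde{d}-1\leq|\widetilde{W}|\leq\widetilde{d}(\widetilde{d}-1)/2$, and the reduced GS basis $\mathfrak{R}$ of $J$, whose leading monomials form $\widetilde{W}$, is correspondingly finite, so $U$ is standard finitely presented. For~(\ref{connected_Prop4}), disconnectedness of $N$ gives $|N_{con}|=s<d$, so part~(\ref{connected_Prop2}) yields $\widetilde{d}\leq s<d$, and Proposition~\ref{UgProp}(\ref{UgProp3}) identifies $U$ as Artin--Schelter regular with $\gldim U=\widetilde{d}$. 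For~(\ref{connected_Prop5}), Proposition~\ref{VIPproposition}(\ref{VIPproposition4}) applied to $U$ forces $\mathfrak{g}$ to be nilpotent of class $m(\widetilde{N})$, and $\widetilde{N}\subseteq N_{con}$ gives $m(\widetilde{N})\leq m(N_{con})$. The main obstacle is the inclusion $\widetilde{N}\subseteq N_{con}$ in part~(\ref{connected_Prop2}): one must combine the purely associative containment $\widetilde{N}\subseteq N$ with the Lie-theoretic fact that the Lyndon atom set of any enveloping algebra is connected; thereafter the remaining claims reduce to routine bookkeeping and appeals to the previously established propositions.
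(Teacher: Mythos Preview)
Your proof is correct and follows essentially the same route as the paper's: establish the elementary containment $\widetilde N\subseteq N$ from $\overline{[w]}=w$, invoke Proposition~\ref{VIPproposition} for the connectedness of $\widetilde N$ to obtain $\widetilde N\subseteq N_{con}$, and use Proposition~\ref{theor_easy_vip} (together with the duality $\widetilde N=N\Leftrightarrow\widetilde W=W$) for part~(\ref{connected_Prop1}). The only cosmetic difference is that the paper, in proving part~(\ref{connected_Prop3}), digresses to exhibit an explicit non-solvable composition witnessing $\widetilde N\subsetneq N$ when $[W]$ is not a GS basis, whereas you deduce finiteness of $\widetilde W$ directly from $\widetilde N\subseteq N$ and $|N|=d$; your shortcut is perfectly valid and somewhat cleaner.
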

\begin{proof}
 Note first that if $[W]$ is a Gr\"{o}bner-Shirshov basis of the Lie ideal $J$, then it is a reduced Gr\"{o}bner-Shirshov basis.

(\ref{connected_Prop1}). Assume  $\widetilde{N} = N$, then $\widetilde{W} = W$, and $W$ is the set of obstructions of the ideal $I$, hence
$[W]$ (considered as a set of associative polynomials) is the reduced Gr\"{o}bner basis of the ideal $I$, and therefore  $[W]$ is the reduced Gr\"{o}bner-Shirshov basis of the
Lie ideal $J$.  Conversely, if $[W]$ is the reduced Gr\"{o}bner-Shirshov basis of the Lie ideal $J$ then $[W]$
(considered as a set of associative polynomials) is the reduced Gr\"{o}bner basis of the (associative) ideal $I$, hence $W$ is the set of obstructions for $U$, and
$\widetilde{W}=W,$ hence $\widetilde{N}=N$.

(\ref{connected_Prop2})
Suppose $a \in \widetilde{N}$ is an atom for $U$, then $a$ is normal modulo $I$  (equivalently, $a$ does not contain as a subword the highest term of any element
$g \in I$). In particular, for every $w \in W$ we have $[w]_{ass} \in I$, and $\overline{[w]_{ass}}= w$, hence $w \sqsubseteq a$ is impossible, so $a \in N$. This proves
$\widetilde{N}\subseteq N$, but since $\widetilde{N}$ is connected, one has
$\widetilde{N} \subseteq N_{con} \subseteq N.$

(\ref{connected_Prop3})
By (\ref{connected_Prop2}) $\widetilde{N}$ is finite, with $|\widetilde{N}| = \widetilde{d} \leq d = |N|$, therefore, by Theorem B, $\widetilde{W}$ is also finite and satisfies
\[\widetilde{W} \leq
\frac{\widetilde{d}(\widetilde{d}-1)}{2} < \frac{d(d-1)}{2}.\] It follows that $U$ is standard finitely presented, and the reduced Gr\"{o}bner-Shirshov basis of the Lie ideal $J$ is also finite.

(\ref{connected_Prop4}). The set of Lyndon atoms $\widetilde{N}$ is connected and $\widetilde{N}\subseteq N$ implies  $\widetilde{N} \subseteq N_{con}\subseteq N$.
Part (\ref{connected_Prop5}) is clear.
\end{proof}

We remind that in Section 5 was set Notation-Convention \ref{notconventionB}  valid till the end of the paper, and in particular,  for the following corollary.

\begin{corollary}
\label{AS-corollary}
Suppose $(N, W)$ is a standard Lyndon pair, that is $[W]$ is a Gr\"{o}bner-Shirshov basis of the Lie ideal $J= ([W])_{Lie}$ in $\Lie(X)$), $\mathfrak{g}_W = \Lie(X) /J$.
Suppose $|N|=d$.
Then the universal enveloping algebra
$U = U\mathfrak{g}_W = K\asX / I$ is an AS-regular algebra
of global dimension $\gldim U = d = |N|= \gkdim U.$
Clearly, then the class $\mathfrak{C} (X, W)$ contains AS-regular algebras of global dimension  $d=|N|$.

In particular, every standard Lyndon pair $(W,N)$ with a finite set $N$ is regular.
\end{corollary}

\subsection{Nontrivial disconnected extensions of standard Lyndon pairs}
In this subsection  $X$ has arbitrary finite order $n \geq 2$.  In the cases when Filiform algebras occur, $X = \{x< y\}$, as usual.

\begin{definition}
\label{disconnected_ext_def}
Let $(N^{\star},W^{\star})$ be a standard Lyndon pair  (that is $[W^{\star}]$ is a Gr\"{o}bner-Shirshov basis of the Lie ideal $J= ([W^{\star}])_{Lie}$ in $\Lie(X)$).
A Lyndon pair $(N, W)$ is called \emph{a nontrivial disconnected extension} of $(N^{\star},W^{\star})$ if $N$ is disconnected and $N^{\star} = N_{con} \subsetneqq N$.
\end{definition}

\begin{proposition}
\label{FiliformWProp2}
Suppose $(N,W)$ is a nontrivial disconnected extension of the standard Lyndon pair $(N^{\star},W^{\star})$, where $|N^{\star}|= |N_{con}|= d$.
In notation as usual, $J= ([W])_{Lie}$ is the corresponding Lie ideal in $\Lie(X)$, $\mathfrak{g}= \Lie(X)/J$, and $U=U\mathfrak{g}$
is the corresponding enveloping algebra. The following conditions hold.
\begin{enumerate}
\item
\label{FiliformWProp21}
 $[N^{\star}]$  is a $K$-basis of $\mathfrak{g}$.
  \item
  \label{FiliformWProp22}
  The enveloping algebra  $U$ is in the class  $\mathfrak{C}(X, W^{\star})= \mathfrak{C}(X, N^{\star})$.
  $U$ is an s.f.p.  Artin-Schelter regular algebra with $\gldim U = d <|N|$.
 \item
 \label{FiliformWProp23}
 Furthermore, assume that $(N^{\star},W^{\star})$ satisfies one of the following conditions:
(a) $N^{\star}= N(\Lcal_{d})$; (b) $N^{\star}= N(\Qcal_{d})$; (c) $N^{\star}= L(m)$, (the set of all Lyndon words of length $\leq m$).
Then
\begin{enumerate}
\item
$[W^{\star}]$ is the Gr\"{o}bner-Shirshov basis of the Lie ideal $J$.
\item
The monomial Lie algebra $\mathfrak{g}=\Lie(X)/ ([W])_{Lie}$ coincides with the standard monomial algebra  $\mathfrak{g}_{W^{\star}} = \Lie(X)/ ([W^{\star}])_{Lie}$
\item
The enveloping algebra
$U= U\mathfrak{g} = K\asX/([W])= K\asX/I$
has defining relations $[W^{\star}]_{ass}$ which form a (finite) Gr\"{o}bner basis of the corresponding ideal $I$ in $K\asX$ .
\end{enumerate}
     \end{enumerate}
\end{proposition}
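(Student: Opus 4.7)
The plan is to establish a single structural identity from which all three parts will follow at once: the Lie ideals $J = ([W])_{Lie}$ and $J^\star = ([W^\star])_{Lie}$ coincide in $\Lie(X)$. Granting this, $\mathfrak{g} = \mathfrak{g}^\star := \mathfrak{g}_{W^\star}$, so part~(1) will be immediate since the standardness of $(N^\star, W^\star)$ and Proposition~\ref{theor_easy_vip} supply $[N^\star]$ as a $K$-basis of $\mathfrak{g}^\star = \mathfrak{g}$; part~(2) will follow because the enveloping algebras then coincide, hence $U \in \mathfrak{C}(X, W^\star)$, and Proposition~\ref{UgProp} yields s.f.p.\ AS-regularity with $\gldim U = d$, the strict inequality $d < |N|$ being precisely the nontriviality of the disconnected extension; and part~(3) will follow because $[W^\star]$ is a Gr\"obner--Shirshov basis of $J^\star = J$ by the standardness hypothesis, which gives (3.a), (3.b), and (3.c) at once (the last by passing to associative polynomials via Proposition~\ref{theor_easy_vip}).

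To establish $J = J^\star$, I will set $k = \max\{|u| : u \in N^\star\}$ and exploit the hypothesis $N_{con} = N^\star$ in the form $N \cap L_{\leq k} = N^\star$ and $N \cap L_{k+1} = \emptyset$. The crucial tool is that both $J$ and $J^\star$ are $\mathbb{Z}^n$-multigraded, so every Lyndon--Lie monomial $[v]$ reduces modulo either ideal to a $K$-linear combination of basis elements lying in the single multi-degree component $L_{\deg(v)}$. For the inclusion $J \subseteq J^\star$ I take $w \in W$ and split on length. If $|w| \leq k+1$, every proper Lyndon subword of $w$ has length $\leq k$ and lies in $N \cap L_{\leq k} = N^\star$; combined with $w \notin N \supseteq N^\star$ this places $w \in W^\star$, so $[w] \in [W^\star] \subseteq J^\star$. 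If instead $|w| \geq k+2$, then $|\deg(w)| = |w| > k$ while $N^\star \subseteq L_{\leq k}$ forces $N^\star \cap L_{\deg(w)} = \emptyset$; the multi-homogeneous PBW basis $[N^\star]$ of $\mathfrak{g}^\star$ then forces the image of $[w]$ to lie in $\Span_K[N^\star \cap L_{\deg(w)}] = 0$, yielding $[w] \in J^\star$.

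The reverse inclusion $J^\star \subseteq J$ is entirely symmetric. For $w^\star \in W^\star$ with $|w^\star| \leq k+1$, the same subword analysis together with $N \cap L_{\leq k+1} = N^\star$ gives $w^\star \notin N$ with all proper Lyndon subwords in $N$, so $w^\star \in W$ and $[w^\star] \in J$. For $|w^\star| \geq k+2$ I will invoke Proposition~\ref{connected_Prop}~(2), which supplies $\widetilde{N} \subseteq N_{con} = N^\star \subseteq L_{\leq k}$ for the Lyndon atoms $\widetilde{N}$ of $U = U\mathfrak{g}$; hence $\widetilde{N} \cap L_{\deg(w^\star)} = \emptyset$, and the multi-homogeneous PBW basis $[\widetilde{N}]$ of $\mathfrak{g}$ forces $[w^\star] = 0$ in $\mathfrak{g}$. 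The main obstacle -- and the reason a purely combinatorial comparison of $W$ with $W^\star$ does not suffice -- is justifying that reductions modulo $J$ or $J^\star$ preserve multi-degree; this rests on the $\mathbb{Z}^n$-homogeneity of the regular bracketings $(a[u]b)$ driving every Shirshov reduction step, via equation~(\ref{regularbrack_eq}), so that the empty multi-degree components of $N^\star$ (respectively $\widetilde{N}$) annihilate $[w]$ (respectively $[w^\star]$) of sufficient length.
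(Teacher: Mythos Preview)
Your proof is correct and takes a genuinely different route from the paper. The paper establishes part~(1) by showing $\widetilde{N} = N^\star$ through an analysis of how compositions of overlap on 2-chains of $W$ can (or cannot) affect elements of $N^\star$: short 2-chains are already 2-chains on $W^\star$ and their compositions are solvable by standardness, while long 2-chains produce compositions of length exceeding $m(N^\star)$ and hence cannot touch $N^\star$. Parts~(2) and (3) are then deduced separately, with (3) relying on the extra hypotheses (a), (b), (c) via Corollaries~\ref{Obstructionscor1} and \ref{Obstructionscor2}, which exploit the fact that in those special cases each $w \in W^\star$ is $\prec$-minimal in its multi-degree component.

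Your argument instead proves the ideal identity $J = J^\star$ directly, bypassing the composition analysis entirely. This is both cleaner and strictly stronger: once $J = J^\star$ is established, the conclusions (3.a)--(3.c) follow immediately from the standardness of $(N^\star, W^\star)$ \emph{without} invoking the special hypotheses (a), (b), (c). In other words, your argument shows that part~(3) of the proposition holds in full generality, whereas the paper's proof genuinely needs the extra assumptions because it only secures $\widetilde{W} = W^\star$, and (as the paper itself notes in Remark~\ref{Obstructionsrem}) equality of obstruction sets does not in general force equality of the underlying ideals. The paper's approach has the merit of tracking the Gr\"obner--Shirshov completion process explicitly, which may be informative in other settings; your approach trades that for a sharper structural conclusion obtained with less machinery.
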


\begin{proof}
(\ref{FiliformWProp21})
As in Proposition \ref{connected_Prop}
denote by $(\widetilde{N}, \widetilde{W})$  the Lyndon pair corresponding to the enveloping algebra $U=U\mathfrak{g} = K \asX/I$.
It will be enough to prove $(\widetilde{N}, \widetilde{W})= (N^{\star}, W^{\star})$.
By Proposition \ref{connected_Prop}, and by the hypothesis one has
\begin{equation}
\label{atoms_eq1}
\widetilde{N} \subseteq N_{con}= N^{\star}.
\end{equation}
 We shall prove that $ N^{\star} \subseteq \widetilde{N},$ that is every Lyndon word $a\in N^{\star}$ is normal modulo $J$.
We claim that a word $a \in N^{\star}$ can not be affected in the process of resolving a composition of overlap
corresponding to some $2$-chain on $W$. Indeed, consider the set of two-chains on $W$. Denote by $m^{\star} = m(N^{\star})$ the maximum of all lengths of words in $N^{\star}$.
Note that if a word $w \in W$ has length $|w|\leq m^{\star}$, then $w \in W^{\star}$.
Suppose $\omega$ is a two-chain on $W$. Two cases are possible
(i)  $\omega = ut =hv, u, v \in W,$ has length $|\omega|\leq m$, then $\omega$ is composed out of the Lyndon words $u,v \in W$, of lengths $|u|< m^{\star}, |v|< m^{\star}$.
It follows hat $u, v \in W^{\star}$, so $\omega$ is a two-chain on $W^{\star}$. Then the composition of overlap induced by $\omega$
does not affect any of the elements
$a \in N^{\star}$, since $(N^{\star}, W^{\star})$  is a standard Lyndon pair. (ii) $\omega = ut =hv$ has length $|\omega| \geq m+1$.
 Then the composition of overlap, induced by $\omega$ is either $0$, or it is a linear combination of elements $[b_i],$ with the same multi-degree as $\omega$,
 $b_i \in L (\omega), b_i \prec \omega$.
 But $|\omega| \geq m+1$, hence $L (\omega)\bigcap N^{\star} = \emptyset$. It follows that the composition of overlap induced by $\omega$  can not affect any of the Lyndon words in
 $N^{\star}$. We have shown that every Lyndon word in $N^{\star}$ is normal modulo the ideal $([W])_{Lie}= J$. Therefore
 $N^{\star} \subseteq \widetilde{N}$, which together with (\ref{atoms_eq1}) implies the desired equality  $N^{\star} = \widetilde{N}$.
 This implies, that there is an equality of Lyndon pairs
 \begin{equation}
\label{atoms_eq2}
(\widetilde{N}, \widetilde{W})= (N^{\star}, W^{\star}).
\end{equation}
Hence the set $[N^{\star}]$ is a $K$-basis of the Lie algebra
$\mathfrak{g}$.
Part (2) is straightforward. In particular $U\in \mathfrak{C}(X, W^{\star})$.

However, as we have already discussed, in general, the set of obstructions $W^{\star}$ of $U$ does not determine the Gr\"{o}bner-Shirshov basis of the ideal $J= ([W])_{Lie}$,
its elements are not necessarily Lyndon Lie monomials.

Part (\ref{FiliformWProp23}) follows from Corollary  \ref{Obstructionscor1}, and  Corollary
 \ref{Obstructionscor2}. We see that under the restrictive hypothesis of part (\ref{FiliformWProp23}) the set of obstructions $W^{\star}$ determines explicitly the the Gr\"{o}bner-Shirshov basis of the Lie ideal $J= ([W])_{Lie}$, and the relations of the AS regular algebra $U$.
\end{proof}

\subsection{The Fibonacci monomial algebras $F_n$, and the Lie algebras $\mathfrak{g}_n = \Lie(X)/([W(F_n)])$}
\label{Fib_section}
In this subsection $X = \{x<y\}$. In \cite{GIF}, section 7,
we define the sequence of {\it Fibonacci-Lyndon words} $\{\fib_n(x,y)\}$ recursively, as follows:
$\fib_0 := x, \fib_1 := y$ and, then for $n \geq 1$
\begin{equation} \label{FibLigDef}
 \fib_{2n} = \fib_{2n-2}\fib_{2n-1}, \quad \fib_{2n+1} = \fib_{2n} \fib_{2n-1}=\fib_{2n-2}\fib_{2n-1}\fib_{2n-1}.
\end{equation}
This give the sequence

\vskip 3mm
\begin{tabular}{c|c|c|c|c|c|c}
 0 & 1 & 2 & 3 & 4 & 5 & 6 \\
\hline
 $x$ & $y$ & $xy$ & $xyy$ & $xyxyy$ & $xyxyyxyy$ & $xyxyyxyxyyxyy$. \\
\end{tabular}

\vskip 3mm \noindent Note that if we let $a$ be $\fib_2(x,y) = xy$
and $b$ be $ \fib_3(x,y) = xyy$, then the Fibonacci-Lyndon word
$\fib_m(a,b) = \fib_{m+2}(x,y)$.

\begin{facts}
\label{Fib_facts}
\cite{GIF}
The following holds:
\begin{itemize}
\item[a.] The word $\fib_n(x,y)$ is a Lyndon word and its length
is the $n$'th Fibonacci number.
\item[b.] For the lexicographic order we have
\[ \fib_0 < \fib_2 < \cdots < \fib_{2n} < \cdots < \fib_{2n+1} < \cdots < \fib_3
< \fib_1. \]
\end{itemize}
Let $W(F_{\infty})$ consist of all Lyndon words $\fib_{2n-2}\fib_{2n}$ and
 $\fib_{2n+1}\fib_{2n-1}$, where $n \geq 1$.
 A Lyndon word $w$ in $x$ and $y$ is not in the ideal
$(W(F_{\infty}))$ of $K\asX$ if and only if $w$ is a Fibonacci-Lyndon word.

Let $W(F_n)$ be the antichain of all minimal elements in $W(F_{\infty}) \cup \{ \fib_n(x,y) \}$,
with respect to the divisibility order $\sqsubset$.
(This is a finite set, since $\fib_n$ is
a factor of the Fibonacci-Lyndon words later in the sequence.)
The corresponding monomial algebra
\[ F_n = K\asX /(W(F_n))\]
is called
 {\it the  Fibonacci-Lyndon monomial algebra}, \cite{GIF} (or shortly, \emph{the Fibonacci algebra}).
   \end{facts}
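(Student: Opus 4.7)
The plan is to prove parts (a) and (b) by simultaneous strong induction on $n$, since the Lyndon property of each $\fib_n$ and the interleaved lexicographic chain feed each other through the recursion (\ref{FibLigDef}). The base cases $n = 0, 1, 2, 3$ are immediate: $\fib_0 = x$, $\fib_1 = y$, $\fib_2 = xy$, $\fib_3 = xyy$ are Lyndon of the correct Fibonacci lengths, and $x < xy < xyy < y$ is directly verified. For the inductive step, assume the alternating chain holds with each term Lyndon. Because $\fib_{2n-2} < \fib_{2n-1}$, Facts \ref{fact_lyndonwords} (\ref{fact_lyndonwords1}) yields $\fib_{2n} = \fib_{2n-2}\fib_{2n-1} \in L$ with $\fib_{2n-2} < \fib_{2n} < \fib_{2n-1}$, of length $F_{2n-2} + F_{2n-1} = F_{2n}$. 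Similarly, since $\fib_{2n} < \fib_{2n-1}$, one gets $\fib_{2n+1} = \fib_{2n}\fib_{2n-1} \in L$ with $\fib_{2n} < \fib_{2n+1} < \fib_{2n-1}$. Transitivity inserts the two new terms into the correct positions of the bi-infinite chain, completing (a) and (b).

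For the Lyndon-avoidance statement (c), I would prove the two implications separately. The easier direction---that no Fibonacci-Lyndon word $\fib_n$ contains a subword from $W(F_\infty)$---is an induction on the recursion: when forming $\fib_{2n} = \fib_{2n-2}\fib_{2n-1}$ or $\fib_{2n+1} = \fib_{2n}\fib_{2n-1}$, any new subwords introduced at the central junction can be analyzed via Lemma \ref{MonLynLemOverlap} together with the interleaved chain (b), ruling out coincidence with any $\fib_{2k-2}\fib_{2k}$ or $\fib_{2k+1}\fib_{2k-1}$ because such a coincidence would force inadmissible parity or ordering of adjacent Fibonacci factors.

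For the converse, I would induct on $|w|$. Take a Lyndon $w$ with $|w| \geq 2$ that avoids $W(F_\infty)$, and consider its right standard factorization $w = (u, v)$ from Facts \ref{fact_lyndonwords} (\ref{fact_lyndonwords4}); both $u$ and $v$ are Lyndon and avoid the obstructions, so by induction $u = \fib_i$ and $v = \fib_j$ for some $i, j$ with $\fib_i < \fib_j$. The main obstacle is now a case analysis: one must verify that of all ordered pairs $(i, j)$ compatible with the chain (b), only the legitimate pairs $(2n-2, 2n-1)$ (giving $\fib_{2n}$) and $(2n, 2n-1)$ (giving $\fib_{2n+1}$) produce a word $uv$ that avoids the obstruction set, while every other pairing either equals one of the two obstruction families outright, or contains one of them as an interior subword. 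The verification rests on the explicit form of the obstructions $\fib_{2n-2}\fib_{2n}$ and $\fib_{2n+1}\fib_{2n-1}$, which precisely rule out the ``doubled parity'' concatenations (even--even or odd--odd) as well as the non-consecutive even--odd junctions. Since the statement is attributed to \cite{GIF}, the detailed bookkeeping is carried out there; my sketch records the structural skeleton of the argument.
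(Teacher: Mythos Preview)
The paper does not actually prove this statement: it is recorded as a \emph{Facts} block with the citation \cite{GIF}, and no argument is supplied here. So there is no ``paper's own proof'' to compare against; the result is imported wholesale from the earlier joint paper.

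That said, your sketch is sound in outline. Parts (a) and (b) are handled completely and correctly by the simultaneous induction you describe, using Facts~\ref{fact_lyndonwords}(\ref{fact_lyndonwords1}) as the engine. For part (c), your strategy---induct on length, take the right standard factorization $w=(u,v)$, apply the inductive hypothesis to get $u=\fib_i$, $v=\fib_j$, then eliminate all pairs $(i,j)$ other than the two legitimate ones---is the natural approach and is essentially what is done in \cite{GIF}. The one place to be careful is that the case analysis is a bit more delicate than ``doubled parity is ruled out'': for non-consecutive same-parity pairs (e.g.\ $\fib_0\fib_4$ or $\fib_5\fib_1$) and for even--odd pairs that are not of the form $(2n{-}2,2n{-}1)$ or $(2n,2n{-}1)$, one must exhibit an explicit obstruction subword, and this requires tracking how shorter Fibonacci--Lyndon words sit inside longer ones. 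You are right to flag this as the main bookkeeping step and to defer to \cite{GIF} for the details.
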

The
set of  Lyndon atoms with respect to the ideal $(W(F_n))$ in $K\asX$ is
$N(F_n) = \{\fib_0, \ldots, \fib_{n-1} \}$. We shall call the corresponding Lyndon pair $(N(F_n), W(F_n))$ \emph{the $n$th Fibonacci Lyndon pair}.
The monomial algebra $F_n$
has Hilbert series
$H_{F_n} = \prod_{i =0}^{n-1} \;(1-t^{|f_i|})^{-1},$
where $f_i$ is the $i$'th Fibonacci number.
Clearly,  the global
dimension and the Gelfand-Kirillov dimension of $F_n$ are both $n$.
Proposition 8.1., \cite{GIF} states that
the Fibonacci-Lyndon monomial algebra $F_6$
does not deform to a bigraded Artin-Schelter regular algebra.

\begin{corollary}
\label{Fibcor}
\cite{GIF}
The class $\mathfrak{C} (X, W(F_6))$ does not contain a bigraded Artin-Schelter regular algebra.
\end{corollary}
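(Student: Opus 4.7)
The plan is to reduce Corollary \ref{Fibcor} to Proposition 8.1 of \cite{GIF}, which asserts that the Fibonacci-Lyndon monomial algebra $F_6$ does not deform to a bigraded AS-regular algebra. The content added by the corollary is the removal of the adjective ``bigraded'': the strategy is therefore to show that any hypothetical AS-regular member of $\mathfrak{C}(X, W(F_6))$ forces the existence of a bigraded AS-regular deformation of $F_6$.

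First I would record the data supplied by Theorem II applied to the Fibonacci Lyndon pair $(N(F_6), W(F_6))$: since $|N(F_6)|=6$, every $A \in \mathfrak{C}(X, W(F_6))$ satisfies $\gldim A = \gkdim A = 6$ and shares the Fibonacci Hilbert series
\begin{equation*}
H_A(t) = \prod_{i=0}^{5}\frac{1}{1-t^{|f_i|}} = \frac{1}{(1-t)^{2}(1-t^{2})(1-t^{3})(1-t^{5})(1-t^{8})},
\end{equation*}
which satisfies $H_A(t^{-1}) = t^{20} H_A(t)$. Thus the Gorenstein reciprocity on the level of the Hilbert series is automatic, and any obstruction to AS-regularity must be detected at the level of the minimal free resolution rather than numerically. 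Next I would exploit the fact that every word in $W(F_6)$ is bihomogeneous with respect to the natural $\mathbb{Z}^{2}$-grading on $K\asX$ counting occurrences of $x$ and $y$ separately. Hence the monomial ideal $(W(F_6))$ is bihomogeneous, and for an arbitrary $A = K\asX/I$ in the class the bidegree filtration on $K\asX$ descends to a filtration on $A$ whose associated graded $\mathrm{gr}\, A$ is bigraded, lies in the same class $\mathfrak{C}(X, W(F_6))$, and shares the Hilbert series, Gelfand--Kirillov dimension and global dimension of $A$.

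The main obstacle is the ascent of AS-regularity from $A$ to $\mathrm{gr}\, A$, which once available would produce a bigraded AS-regular deformation of $F_6$ and contradict Proposition 8.1 of \cite{GIF}. The natural approach is through the spectral sequence for the bidegree filtration on the minimal free resolution of the trivial module $K$ over $A$, whose $E_1$-page involves $\Ext^{*}_{\mathrm{gr}\, A}(K,K)$ and whose $E_\infty$-page recovers $\Ext^{*}_{A}(K,K)$. Since $A$ and $\mathrm{gr}\, A$ share Hilbert series and global dimension $d=6$, the alternating sums of graded dimensions of the Ext groups agree on the two sides; one then argues that the rank-one Gorenstein condition in cohomological degree $d$ collapses the spectral sequence enough to force the same condition for $\mathrm{gr}\, A$, and that the dualising module of $\mathrm{gr}\, A$ is obtained as the associated graded of that of $A$. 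Once this semi-continuity step is made rigorous, the desired reduction to Proposition 8.1 is complete, yielding the conclusion that $\mathfrak{C}(X, W(F_6))$ contains no AS-regular algebra at all.
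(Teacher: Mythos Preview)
The paper does not prove this corollary at all: it is stated with the citation \cite{GIF} and used as a black box, the only justification being the sentence immediately preceding it, which records Proposition~8.1 of \cite{GIF}. So there is no ``paper's proof'' to compare your argument against; the content you are trying to supply is precisely the passage from the bigraded statement in \cite{GIF} to the unrestricted one.

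Your proposed argument has a genuine gap, and you in fact name it yourself. The step ``$A$ AS-regular $\Rightarrow \mathrm{gr}\,A$ AS-regular'' goes in the \emph{wrong} direction. Standard filtered-to-graded transfer results (for Auslander regularity, the Gorenstein property, finite global dimension) run from $\mathrm{gr}\,A$ down to $A$, not up; taking an associated graded typically destroys, rather than creates, regularity. Concretely, there is no reason the spectral sequence you describe should degenerate in a way that forces $\Ext^{i}_{\mathrm{gr}\,A}(K,\mathrm{gr}\,A)=0$ for $i\neq d$ from the corresponding vanishing over $A$: higher $\Ext$ groups over $\mathrm{gr}\,A$ can be nonzero and then be killed by differentials on later pages. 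Your appeal to equality of Hilbert series and alternating sums of Betti numbers only controls Euler characteristics, not the individual $\Ext$ groups, so it cannot by itself pin down the rank-one Gorenstein condition for $\mathrm{gr}\,A$.

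Note also that the reduction is not vacuous: $\mathfrak{C}(X,W(F_6))$ genuinely contains non-bigraded algebras, since for instance the obstruction $xxy$ admits normal words of the same length but different bidegree (e.g.\ $xyy$) among its possible lower terms. So if you want an independent proof of the corollary rather than a citation, you would need either a different mechanism for producing a bigraded AS-regular algebra from a singly graded one in this class, or to revisit the argument of \cite{GIF}, Proposition~8.1, and check that it in fact applies without the bigrading hypothesis.
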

The next corollary is a particular case of Proposition \ref{FiliformWProp2}, part (\ref{FiliformWProp23}),
and shows that each Fibonacci pair $(N(F_n), W(F_n))$, with $n \geq 4$ induces different defining relations, $[W(F_n)]$, for the corresponding monomial Lie algebra
$\mathfrak{g}_n=Lie(X)/([W(F_n)])_{Lie}$, but the result is isomorphic to the same Lie algebra: $\mathfrak{g}_n=Lie(X)/([W(F_n)])_{Lie} \cong \Lcal_3$, the $4$-dimensional Filiform Lie algebra!

\begin{corollary}
\label{Fibcor_n}
Let $X = \{x< y\}$.
Let $(N(F_n), W(F_n))$ be \emph{the nth Fibonacci Lyndon pair}, $n \geq 2$.
Denote by
$J(F_n)$ the Lie ideal generated by $[W(F_n)]$ in $\Lie(X)$, $\mathfrak{g}_n=Lie(X)/J(F_n)$,
$I_n$ denotes the two sided ideal $I_n = ([W(F_n)]_{ass})$  in  $K
\asX,$
 $U_n= U\mathfrak{g}_n = K \asX/I_n$ is the enveloping algebra of $\mathfrak{g}_n$.
 Then
\begin{enumerate}
\item
The class $\mathfrak{C}(X, W(F_n))= \mathfrak{C}(X, N(F_n))$ contains an AS-regular algebra occurring as an enveloping algebra $U$ of some graded Lie algebra $\mathfrak{g}$ if and only if
$2 \leq n \leq 4$.
\item $N(F_4)= \{x < xy< xyy< y\} = N(\Lcal_{3})$,  $\mathfrak{g}_4=Lie(X)/J_{(F_4)} \cong \Lcal_{3}$,
the standard Filiform Lie algebra of dimension $4$.
\item Let $n \geq 5$, then the connected component of $N(F_n)$ is exactly
$N(\Lcal_{3})$.
There are equalities $J(F_n) = J(F_4)$, $I_n = I_4$, and
\[
\mathfrak{g}_n=Lie(X)/J(F_n) \cong \Lcal_{3}, \quad U_n = U_4 \cong U\Lcal_{3}.
\]
In particular, $U_n \in \mathfrak{C}(X, W(F_4))$, $\gldim U_n = 4$.
\end{enumerate}
\end{corollary}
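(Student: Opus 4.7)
The proof will rest almost entirely on invoking Proposition \ref{FiliformWProp2} with the standard Lyndon pair $(N^{\star},W^{\star}) = (N(\Lcal_{3}),W(\Lcal_{3}))$, so the main work is identifying the connected component of $N(F_{n})$ correctly. Here is my plan.

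First I would handle the easy direction of (1) and the case $n=4$. The Fibonacci-Lyndon words have lengths $1,1,2,3,5,8,13,\dots$, so for $n\le 4$ the set $N(F_{n})$ only contains words of length $\le 3$, and in particular is connected. Concretely, $N(F_{2})=X$ produces the polynomial ring, $N(F_{3})=\{x,xy,y\}=L(2)$ so by Corollary \ref{freeNilp_cor} the class $\mathfrak{C}(X,W(F_{3}))$ contains $U\mathfrak{L}(2)$, and $N(F_{4})=\{x<xy<xyy<y\}$, which coincides with the set $N(\Lcal_{3})$ in \ref{NFiliformeq} for $d=4$. A direct check shows that $W(F_{4})=\{xxy,xyxyy,xyyy\}=W(\Lcal_{3})$ as well, so by Corollary \ref{MinimalWCor} the Lyndon pair $(N(F_{4}),W(F_{4}))$ is standard and $\mathfrak{g}_{4}\cong \Lcal_{3}$, with $U_{4}\cong U\Lcal_{3}$ an AS-regular algebra of global dimension $4$. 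This proves (2) and the ``if'' parts of (1).

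Next, for $n\ge 5$ I would analyse $N(F_{n})$ by length. Since $|f_{i}|$ is the $i$-th Fibonacci number, one has $f_{i}\in N(F_{n})\cap L_{|f_{i}|}$ for $0\le i\le n-1$, and no other Lyndon atoms belong to $N(F_{n})$. In particular $f_{4}=xyxyy\in N(F_{n})\cap L_{5}$, but $N(F_{n})\cap L_{4}=\emptyset$ since no Fibonacci length equals $4$. Therefore $N(F_{n})$ is disconnected, and by Definition \ref{connect_def}(2) its connected component is exactly
\[
N(F_{n})_{\mathrm{con}}\;=\;\bigcup_{1\le s\le 3}\bigl(N(F_{n})\cap L_{s}\bigr)\;=\;\{x,y,xy,xyy\}\;=\;N(\Lcal_{3}).
\]
By Proposition \ref{VIPproposition}(\ref{VIPproposition3}) (equivalently, Corollary \ref{NdisconnCor}), this disconnectedness already shows that no algebra in $\mathfrak{C}(X,W(F_{n}))$ is the enveloping algebra of a Lie algebra, giving the ``only if'' part of (1).

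Finally, to prove (3) I would apply Proposition \ref{FiliformWProp2} to the Lyndon pair $(N,W)=(N(F_{n}),W(F_{n}))$ together with the standard Lyndon pair $(N^{\star},W^{\star})=(N(\Lcal_{3}),W(\Lcal_{3}))$, which is standard by Corollary \ref{MinimalWCor}. The computation above verifies $N^{\star}=N(F_{n})_{\mathrm{con}}\subsetneq N(F_{n})$, so $(N(F_{n}),W(F_{n}))$ is a nontrivial disconnected extension of $(N(\Lcal_{3}),W(\Lcal_{3}))$ in the sense of Definition \ref{disconnected_ext_def}. Since $N^{\star}=N(\Lcal_{d})$ with $d=4$, hypothesis (a) of Proposition \ref{FiliformWProp2}(\ref{FiliformWProp23}) holds, and its conclusions give at once that $[W(\Lcal_{3})]$ is the reduced Gr\"{o}bner--Shirshov basis of $J(F_{n})$, that $\mathfrak{g}_{n}=\Lie(X)/J(F_{n})$ coincides with $\mathfrak{g}_{W(\Lcal_{3})}\cong \Lcal_{3}$, and that the associative ideal $I_{n}$ is generated as a Gr\"{o}bner basis by $[W(\Lcal_{3})]_{\mathrm{ass}}$. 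Therefore $J(F_{n})=([W(\Lcal_{3})])_{Lie}=J(F_{4})$ and $I_{n}=I_{4}$ for every $n\ge 4$, whence $U_{n}\cong U\Lcal_{3}\in \mathfrak{C}(X,W(F_{4}))$ with $\gldim U_{n}=4$, completing the proof. The only step requiring real (rather than bookkeeping) care is the identification $N(F_{n})_{\mathrm{con}}=N(\Lcal_{3})$, which hinges on the arithmetical fact that $4$ is not a Fibonacci number while $1,2,3$ all are; all remaining conclusions are immediate consequences of the already-established Proposition \ref{FiliformWProp2}.
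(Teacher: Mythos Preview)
Your proof is correct and follows exactly the approach the paper intends: the corollary is stated in the paper as a particular case of Proposition \ref{FiliformWProp2}(\ref{FiliformWProp23}), and you carry this out by identifying $N(F_n)_{\mathrm{con}}=N(\Lcal_3)$ via the gap at length $4$ in the Fibonacci sequence, then invoking that proposition. One cosmetic slip: when you write ``$N^{\star}=N(\Lcal_{d})$ with $d=4$'' you mean the subscript $d=3$ (so that $\Lcal_{3}$ has dimension $4$); the argument is unaffected.
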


\section{Two-generated AS-regular algebras of global dimensions $6$ and $7$, 
occurring as enveloping of standard monomial Lie algebras}
\label{class_sec}
In this section we prove one of the main results of the paper, Theorem IV.
\subsection{More properties of Lyndon pairs}
\label{LyndonPairsProp}
\begin{remark}
\label{LyndonPairsProp_Rem}
Suppose $(N, W)$ is a Lyndon pair in the alphabet $X = \{x_1, \cdots, x_n\}$,
$N=  \{l_1<
l_2 < l_3\cdots < l_{d+1} \}$, let $m = \max \{|l|\mid l\in N\}$ . Then $l_1 = x_1, \; l_{d+1}= x_n$, and the following conditions hold
\begin{enumerate}
\item Assume $u \in N\setminus X$ is maximal w.r.t. $\sqsubseteq$, that is $u$ is not a proper subword of any $l \in N$ (this is always so if $|u|= m$).
Then the set $N_0 = N \setminus \{u\}$ satisfies conditions \textbf{C1} and \textbf{C2}. Let $W_0=W(N_0)$ be the corresponding antichain so $(N_0, W_0)$ is a Lyndon pair,
($|N_0| = d$). Let
$A_0 =A_{W_0}$, be the corresponding monomial algebra. Then every algebra $A \in \mathfrak{C}(X, W_0) = \mathfrak{C}(X, N_0)$ satisfies $\gkdim A= d = \gldim A$.
 \item Conversely, let $(N_0, W_0)$ be a Lyndon pair, with $|N_0| = d$. Suppose that $u, v \in N_0, u < v$ are such that $w= uv\in W_0$, or, equivalently,  $w =uv$ is not in
     $N_0$, but every Lyndon sub-word of $uv$ belongs to $N_0$. Then the set $N = N_0\bigcup \{w\}$ satisfies conditions \textbf{C1} and \textbf{C2}. Let $W=W(N)$ be the
     corresponding antichain of Lyndon words, so that $(N, W)$ is a Lyndon pair, one has $|N|= d+1$, and every algebra $A \in \mathfrak{C}(X, W) = \mathfrak{C}(X, N)$
     satisfies $\gkdim A= d+1 = \gldim A$.
\end{enumerate}
\end{remark}
\begin{problem}
\label{problem1}
\textbf{\emph{Given}} a full list $\mathfrak{P}_d$ of all Lyndon pairs $(N, W)$ with  $|N|= d$, (up to isomorphism of monomial algebras $A_W$).
\textbf{\emph{Find}} the list $\mathfrak{P}_{d+1}$ of all Lyndon pairs $(N, W)$ with  $|N|= d+1$, (up to isomorphism of monomial algebras $A_W$).
\end{problem}
Remark \ref{LyndonPairsProp_Rem} gives a base for the strategy, conditions (2) and (1) can be and will be used for inductive classification of all Lyndon pairs $(N, W)$ in $X^{+}$, with a fixed order $|N|= d+1$.
The interested reader may find a procedure for an effective solution of Problem \ref{problem1} in \cite{joveski}.

Till the end of the section we assume $X = \{x <y\}$.
Applying directly our strategy, we have found the sets $\mathfrak{P}_6$ and $\mathfrak{P}_7$ independently of \cite{joveski}, and without using computer programs.
In Subsections \ref{list6}, \ref{list7} we give lists of Lyndon pairs $(N, W)$ in $X^{+}$, where $X =\{x < y\}$ and $|N|= 6,7$.
  The enumeration (6.m.j), or (7.m.j) stands for a Lyndon pair $(N_j, W_j)$, where $m= m(N_j)$ is the maximal length of all word $l \in N_j$.
  In each case when $N$ is not connected we identify the connected component $N_{con}=: N^{\star}$, and we prove that (in these cases) $(N^{\star}, W^{\star})$ is a standard Lyndon pair, corresponding to an explicitly given standard monomial Lie algebra $\mathfrak{g}_{W^{\star}}$. Note that, in general, for large values of $d$ this is not true.

\subsection{A complete list of Lyndon pairs $(N,W)$  where $|N|=6$}
\label{list6}
\[
\begin{array}{llll}
(6.4.1) & N_{1}&= \{x < x^2y  < xy < xy^2< xy^3 < y\},&   \\
        & W_{1}&= \{x^3y,\;x^2y^2\;x^2yxy,  xyxy^2, \;xy^2xy^3,\; xy^4\}& \\
(6.4.2) & N_{2}&= \{x < x^2y < x^2y^2 < xy < xy^2 < y\}, & \\
        & W_{2}&= \{x^3y,\; xy^3, \;x^2yxy, \; xyxy^2,\; x^2yx^2y^2, \;x^2y^2xy\}&\\
(6.5.3) & N_{3}&=  \{x < xy < xy^2 < xy^3 < xy^4 < y\}= N(\Lcal_5)& \\
        & W_{3}&=  \{xy^ixy^{i+1},\; 0 \leq i \leq 3,  \;xy^5\}& \\
(6.5.4) & N_{4}&=  \{x < xy  <  xyxy^2 < xy^2< xy^3 < y \}=N(\Qcal_5) & \\
        & W_{4}&=  \{x^2y, \;  xyxyxy^2,\; xyxy^2xy^2,\;xy^2xy^3, \;xy^4 \}&\\
(6.5.5) & N_{5}&=  \{x < xxy  < xy<  xyxy^2 < xyy< y \}, & N^{\star} =N(\mathfrak{L}(3))\\
 (6.7.6)& N_{6}&=  \{x < xy  < xy^2 < xy^2xy^3 <xy^3< y \}, & N^{\star} =N(\Lcal_4))\\
 (6.7.7) & N_{7}&=  \{x < xy  < xyxyxy^2 < xyxy^2x <xy^2< y \}, & N^{\star} =N(\Lcal_3))\\
 (6.8.8) & N(F_6)&= \{x, y, xy, xyy, xyxyy, xyxyyxyy\},&\;N^{\star} =N(\Lcal_3))\\
\end{array}
\]
$N^{\star}:= N_{con}$ is the connected component of $N$.

\subsection{A complete list of Lyndon pairs $(N,W)$  where $|N|=7$}
\label{list7}
\[
\begin{array}{lll}
(7.4.1)&N_1= \{x < x^3y < x^2y < x^2y^2 < xy < xy^2 < y\},  &\\
       &W_1= \{x^4y,\;x^3y^2,\; x^3yx^2y,\; x^3yxy,\;x^2yxy,\;xyxy^2,\;x^2y^2xy,\;xy^3 \}&\\
(7.4.2)&N_2=\{x < x^3y < x^2y < xy < xy^2 < xy^3 < y\}, &\\
       &W_2=\{x^4y,\;x^2y^2,\; x^2y^2,\; x^3yx^2y,\; x^2yxy,\; xyxy^2,\;xy^2xy^3,\; xy^4\}&\\
(7.4.3)&N_3=\{ x < x^2y < x^2y^2 < xy < xy^2 < xy^3 < y \}, &\\
       &W_3=\{x^3y,\;x^2y^3,\;x^2yxy,\;  xyxy^2,\;x^2y^2xy,\; x^2yx^2y^2, \;xy^2xy^3, xy^4\}& \\
(7.5.4)&N_4=\{x < xy < xyxy^2 < xy^2 < xy^3 < xy^4 < y\}, &\\
       &W_4=\{x^2y,\;xyx^yxy^2,\; x^yxy^2xy^2,\;x^yxy^3,\; xy^2xy^3,\;xy^3xy^4, \;xy^5\} &\\
(7.5.5)&N_5=\{x < x^2y < xy < xy^2 < xy^3< xy^4 < y\}, &\\
       &W_5=\{x^3y,\; x^2y^2,\; x^2yxy, \;  xyxy^2,\; xy^2xy^3,\; xy^3xy^4,\; xy^4\}&\\
(7.5.6)&N_6=\{ x < x^2y  < xy < xyxyy < xy^2 < xy^3 < y\}, &\\
        & W_6=\{ x^3y,\; x^2y^2, \; xxyxy,\;xyxyxyy,\;xyxy^3, \; xyxy^2xy^2,\;xy^2xy^3,\; xy^4\}&\\
(7.5.7)&N_7=\{x < x^2y < x^2yxy < x^2y^2 < xy < xy^2 < y\}, &\\
       &W_7=\{x^3y, x^2yx^2yxy, x^2yxyxy, x^2yxyx^2y^2, x^2yx^2y^2, xyxy^2,&\\
       &\quad \quad \quad x^2y^2xy, xy^3\}&\\
(7.5.8)&N_8=\{x < x^2y < x^2y^2  < xy < xyxy^2 < xy^2< y \},&\\
        &W_8=\{x^3y,\; x^2yxy, \;x^2yx^2y^2,\;x^2y^2xy,\;xyxyxy^2,\;xyxy^2xy^2,\; xy^3\},&\\
(7.6.9)&N_{9}=\{x < xy < xy^2 < xy^3 < xy^4 < xy^5 < y\},\;&\\
       & W_{9}=\{ xy^{i}xy^{i+1},  0 \leq i \leq 4\}\bigcup \{xy^6\}&\\
(7.5.10)&N =\{ x < x^2y < x^2yxy< xy  < xy^2 < xy^3 < y\},\; &\\
         &W =\{ x^3y, \;x^2y^2,\;x^2yx^2yxy,\; x^2yxyxy,\; xyx^2,\;xy^2xy^3,\;xy^4\}&\\
 (7.5.11)&N =\{ x < x^3y <x^2y<xy< xyxy^2< xy^2 < y\},\; &\\
         &W =\{ x^4y, \;x^2y^2,\;xy^3,\;\cdots\}&\\
(7.6.12) &N=\{x < xy < xyxy^2 < xyxy^3 < xy^2 < xy^3 < y\},\; &\\
           &W=\{ xxy, \;xy^4,\; \cdots  \}&\\
(7.5.13)&N=\{x < x^2y < x^2yxy <xy < xyxy^2 < xy^2 < y\},\;&N^{\star} =N(\mathfrak{L}(3))\\
(7.6.14)&N=\{x < x^2y < x^2y^2 < x^2y^2xy < xy < xy^2 < y\}, &N^{\star} =N_2\\
        &N^{\star}=\{x < x^2y < x^2y^2  < xy < xy^2 < y\}= N_2,&\text{see }(6.4.2)\\
(7.7.15)&N=\{x < xy < xy^2 <xy^2xy^3< xy^3 <xy^4 < y\},  m=7, &N^{\star} =N(\Lcal_5)\\
(7.7.16)&N=\{x < xy < xyxy^2 < xy^2< (xy^2)(xy^3) <xy^3 < y\}, &N^{\star} =N(\Qcal_5)\\
(7.7.17)&N=\{x < x^2y < xy< xy^2< (xy^2)(xy^3) < xy^3 < y\},  &N^{\star} =N_1\\
        &N^{\star}=\{x < x^2y < xy < xy^2 < xy^3< y\}= N_1,\;&\text{see }(6.4.1)\\
(7.7.18)&N=\{x < xy < (xy)(xyxy^2) < xyxy^2 < xy^2< xy^3 < y\}, &N^{\star} =N(\Qcal_5)\\
(7.7.19)&N=\{x < x^2y < (x^2y)(x^2y^2)< x^2y^2 < xy < xy^2 < y\},&N^{\star} =N_2\\
        &N^{\star}=\{x < x^2y < x^2y^2  < xy < xy^2 < y\}= N_2, &\text{see }(6.4.2)\\
(7.7.20)&N=\{x < x^2y < xy< xy(xyxy^2)<  xyxy^2< xy^2 < y\},  &N^{\star} =N(\mathfrak{L}(3))\\
(7.8.21)&N=\{x < x^2y < xy< xyxy^2<  (xyxy^2)(xy^2)< xy^2 < y\},&N^{\star} =N(\mathfrak{L}(3))\\
(7.8.22)&N=\{x < xy< xyxy^2 <(xyxy^2)(xy^2)< xy^2< xy^3 < y\}, &N^{\star} =N(\Qcal_5)\\
(7.8.23)&N=\{x < xy< (xy)(xyxy^2) < xyxy^2 <(xyxy^2)(xy^2)< xy^2 <y\}, &N^{\star} =N(\Lcal_3)\\
(7.9.24)&N=\{x < xy<  xy^2< xy^3< (xy^3)(xy^4)< xy^4<y\},   &N^{\star} =N(\Lcal_5)\\
(7.9.25)&N=\{x < xy< (xy)(xyxyxy^2) <  (xy)(xyxy^2)< xyxy^2 < xy^2 <y\},   &N^{\star} =N(\Lcal_3)\\
(7.10.26)&N=\{x < xy <xy^2 < (xy^2)(xy^2xy^3) < xy^2xy^3< xy^3 <y\},  &N^{\star} =N(\Lcal_4)\\
(7.11.27)&N=\{x < xy <xy^2 < xy^2xy^3< (xy^2xy^3)(xy^3)< xy^3< y\},  &N^{\star} =N(\Lcal_4)\\
(7.11.28)&N=\{x, xy,  xyxy^2,   (xyxy^2)(xy^2), (xyxy^2xy^2)(xy^2), xy^2< y\},  &N^{\star}=N(\Lcal_3)\\
(7.12.29)&N=\{x, xy, xy(xyxy^2), (xyxyxy^2)(xyxy^2), xyxy^2, xy^2< y\},&N^{\star} =N(\Lcal_3)\\
(7.13.30)&N=N(F_7)=\{x,  y, xy, xyy, xyxyy, xyxyyxyy, (xyxyy)(xyxyyxyy) \},&N^{\star} =N(\Lcal_3).
 \end{array}
\]
The enumeration (7.m.j) indicates that $m = m(N_j) := \max \{|l|\mid l \in N_j\}$.  The connected component of $N$ is $N^{\star}:= N_{con}$,
 $\mathfrak{L}(3)$ denotes the free nilpotent algebra of nilpotency class $3$, and $\Lcal_d$ is the Filiform Lie algebra of dimension $d+1$.
\begin{theorem4}
\label{theorem4}
Let $X = \{x < y\}$, and $(N, W)$ be the Lyndon pairs in $X^{+}$ given in subsections \ref{list6} and \ref{list7}. For each such pair
$J=([W])_{Lie}$ is the Lie ideal of $\Lie(X)$ generated by $[W]$, $\mathfrak{g}=\Lie(X)/J$,
$I$ denotes the two sided ideal $I = ([W]_{ass})$  in  $K
\asX,$
 so $U= U\mathfrak{g} = K \asX/I$ is the enveloping algebra of $\mathfrak{g}$.
\begin{enumerate}
\item[I]  Suppose $|N| = 6$. Then
\begin{enumerate}
\item There are eight Lyndon pairs $(N_i, W_i), 1 \leq j \leq 8$, up to isomorphism of monomial algebras $A_W$, see the list \ref{list6}. Suppose $(N_i, W_i)$
is from this list.
\item
The algebra $U_i = K \asX/([W_i]_{ass}) $
is an Artin-Schelter regular algebra of global dimension $6$ if and only if $1 \leq i \leq 4$. In this case $U_i \in \mathfrak{C}(X,W_i)$.
\item
The class $\mathfrak{C}(X, N(F_6)) =  \mathfrak{C}(X,W_8)$ does not contain any bigraded Artin-Schelter regular algebra.
\item The class $\mathfrak{C}(X,W_i)$, $5 \leq i \leq 7$ does not contain any AS regular algebra $U$ presented as an enveloping $U = U\mathfrak{g}$ of a graded
Lie algebra.
\item
For $5 \leq i \leq 8$ the algebra $U_i$
is an AS regular algebra with
$\gldim U_i \leq 5$.
\end{enumerate}
\item[II]  Suppose $|N| = 7$. Then
\begin{enumerate}
\item There are $30$ Lyndon pairs $(N_i, W_i), 1 \leq i \leq 30$, up to isomorphism of monomial algebras $A_W$, see the list \ref{list7}.  Suppose $(N_i, W_i)$ is a pair from this list.
 \item
 The algebra $U_i = K \asX/([W_i]_{ass})$
is an Artin-Schelter regular algebra of global dimension $7$ if and only if $1 \leq i \leq 9$. In this case $U_i \in \mathfrak{C}(X,W_i)$.
\item The class $\mathfrak{C}(X,W_i)$, $13 \leq i \leq 30$, does not contain any AS regular algebra $U$ presented as an enveloping $U = U\mathfrak{g}$ of a graded
Lie algebra.
\item
For $10 \leq i \leq 30$, $U_i$ is a finitely presented AS regular algebra of global dimension
$\gldim U_i \leq 6$. These algebras are identified in Corollary \ref{thm4cor}, and Remark \ref{thm4rem}.
\end{enumerate}
\end{enumerate}
\end{theorem4}
\begin{proof}
I.
(b)  Consider the Lyndon pairs $(N_j, W_j), 1 \leq j\leq 4$ given in subsection \ref{list6}.
We claim that each of the sets $[W_j]$ is a Gr\"{o}bner-Shirshov basis of the Lie ideal $J_j$.
Note that for $j = 1,2$ the number $m_j = \max \{|u|\mid u \in N_j\}$ is $m_j = 4$, and the minimal length of a $2$-chain
is $c_j = 6 > m_j$.
Indeed, $\omega = (xyxyy)y = xy(xyyy)$ is the only $2$-chain on $W_j$ with length $|\omega|=6$. All remaining $2$-chains have lengths $>6$.
It follows from Proposition \ref{SimplifyProp} (3.a), that all compositions of overlap are solvable,
hence $[W_j]$ is a Gr\"{o}bner-Shirshov basis of the Lie ideal $J_j = ([W_j])_{Lie}$.
For $j = 3,4$  we use the results from  subsections \ref{MonFiliformSec0} and \ref{MonFiliformSec}.
The Lyndon pair
$(N_3, W_3)$ determines $\mathfrak{g}_3 \simeq \Lcal_5$, the standard 6-dimensional Filiform Lie algebra of nilpotency class $5$, while the pair $(N_4, W_4)$ determines $\mathfrak{g}_4 \simeq \Qcal_5$,
the second type Filiform algebra of dimension 6.
   In particular, each of the sets $[W_i], i= 3, 4$ is a Gr\"{o}bner-Shirshov basis of the Lie ideal $J_i$, and the set $[N_i]$ forms a $K$-basis
    of $\mathfrak{g}_i$, $i=3,4$. It follows that  $U_i = U\mathfrak{g}_i \in \mathfrak{C} (X, W_i)$ for all $1\leq i \leq 4$. Moreover, $U_i$ is an Artin-Schelter reqular algebra of
    $\gldim U_i = 6$. Part (d) implies that for  $5\leq i \leq 8$, $[W]_i$ is not a Grobner-Shirshov basis, and $\gldim U_i \leq 5$. This completes the proof of  (b).

(c) By Corollary \ref{Fibcor}
the class $\mathfrak{C} (X, W(F_6))$ does not contain any bigraded AS-regular algebra.

(d) Each of the sets $N_i$, $5 \leq i \leq 8$ is not connected, hence by Corollary \ref{NdisconnCor},  the class $\mathfrak{C}(X, W_i)$
does not contain the enveloping algebra $U=U\mathfrak{g}$ of any graded Lie algebra $\mathfrak{g}$.
In particular,  $[W_i]$
is not a Lyndon-Shirshov basis of the Lie ideal $J_i=([W_i])_{Lie}$ of $\Lie(X)$.
For  $i, 5 \leq i\leq 8$ $(N_i, W_i)$ is a nontrivial disconnected extension of its connected components $N_i^{\star}= (N_i)_{con}$, which is identified explicitly in the list. We use Proposition
  \ref{FiliformWProp2} to
identify the Lie algebra
 $\mathfrak{g}_i=Lie(X)/[W_i])_{Lie}$, $5 \leq i \leq 8$, and $U\mathfrak{g}_i$.
In the case (6.5.5)  the connected component $N_5^{\star} = L(3)$ -the set  of all Lyndon words of length $\leq 3$.
 Proposition \ref{FiliformWProp2} implies that $W_5^{\star}= W(N_5^{\star})= W(L(3))$,
 satisfies $([W_5^{\star}])_{Lie}= ([W_5])_{Lie}= J_5$.
 The set $[W_5^{\star}]$ is a Gr\"{o}bner-Shirshov basis of the Lie ideal $J_5$. Moreover, $\mathfrak{g}_5=Lie(X)/J_5 \cong \mathfrak{L}(3)$, the free nilpotent Lie algebra of nilpotency class $3$. Its enveloping algebra $U_5 \cong U\mathfrak{L}(3) \in \mathfrak{C}(X, W(L_3) )$ is an Artin-Schelter regular algebra of $\gldim U_5= 5$.
 In case (6.5.6) $N_6^{\star} = N(\Lcal_4)$, so $(N_6, W_6)$ is a nontrivial disconnected extension of the standard Lyndon pair
 $(N(\Lcal_4), W(\Lcal_4))$, corresponding to the Filiform Lie algebra $\Lcal_4$. Proposition \ref{FiliformWProp2} again implies that $J_6 = ([W(\Lcal_4))])_{Lie}$, hence
 $\mathfrak{g}_6 \cong \Lcal_4$, and $U_6=U\mathfrak{g}_6 \cong U\Lcal_4$, $U_6 \in \mathfrak{C}(X, W(\Lcal_4))$ is an Artin-Schelter regular
 algebra of  $\gldim U_6 = 5.$
 In each of  the cases (6.7.7) and (6.8.8) $\mathfrak{g}_i \cong \Lcal_3$,  so $U = U\mathfrak{g}_i \in \mathfrak{C}(X, W(\Lcal_3))$
is an AS regular algebra with $\gldim U_i =4$, $i = 7, 8$.  Part I of the theorem has been proved.

 II.
(b) We shall prove that for $1 \leq i \leq 9$ the Lie algebra $[W]_i$ is a Gr\"{o}bner-Shirshov basis of the Lie ideal $J_i$.
We split this in several cases.

(b.1). The cases (7.4.i), $1 \leq i \leq 3$.  For each of the Lyndon pairs $(N_i, W_i), $ the maximal length of a Lyndon atom is $m= 4$, at the same time
  the minimal length of a two-chain on $W_i$ is $6$, so Proposition \ref{SimplifyProp} implies
that all compositions of overlap are solvable, and $[W_i]$ is a GS-basis of the Lie ideal $J_i = ([W_i])_{Lie}$.
Therefore
$U_i \in \mathfrak{C}(X,W_i)$ is an AS regular algebra of $\gldim U_i = 7$

(b.2) The Lyndon pairs $(N_4, W_4)$, $(N_7, W_7)$, $(N_8, W_8)$ are similar. For $i= 4, 7, 8,$ the maximal length of Lyndon atoms in $N_i$ is $m_i = 5$, and the minimal length
of
a two-chain on $W_i$ is $c_i = 6$. Proposition \ref{SimplifyProp} implies again that
  $[W_i]$ is a Gr\"{o}bner-Shirshov basis of the ideal $J_i$, and $U_i \in \mathfrak{C}(X,W_i)$ is an AS regular algebra of $\gldim U_i = 7$.

(b.3) The Lyndon pair $(N_9, W_9)$ corresponds to the standard filiform Lie algebra $\mathcal{L}_6$, in particular $[W_9]$ is a Gr\"{o}bner-Shirshov basis of the Lie ideal
 $J_9$.

(b.4) It remains to consider only two cases: (7.5.5) and (7.5.6.)
We have
\[
  \begin{array}{ll}
(7.5.5) &N_5=\{x < x^2y < xy < xy^2 < xy^3< xy^4 < y\}, \; m(N_5)=5\\
        &W_5=\{x^3y,\; x^2y^2,\; xxyxy, \;  xyxy^2,\; xy^2xy^3,\; xy^3xy^4,\; xy^4\}
             \end{array}
             \]

  \[   \begin{array}{ll}
    (7.5.6) &N_6=\{ x < xxy  < xy < xyxyy < xy^2 < xy^3 < y\}, \; m(N_6)=5\\
        &W_6=\{ x^3y,\; x^2y^2, \; xxyxy\;xyxyxyy,\;xyxy^3 \; xyxy^2xy^2\;xy^2xy^3,\; xy^4\}
             \end{array}
        \]

         For $i=5, 6$ there is unique 2-chain of length $5$, $\omega = (xxxy)y = x(xxyy)$, on  $W_i$.
           The 2-chain $\omega$ implies the following composition of overlap
         \begin{equation}
         \label{xxxyy_eq}
         \begin{array}{ll}
         (xxxy, xxyy)_{\omega}&= [[xxxy],y]- [x,[xxyy]] = [[x,[xxy]],y]- [x,[xxyy]]\\
                              &= [[[xy],[xxy]]+ [x,[[xxy],y]]] - [x,[xxyy]]\\
                              & = -[[xxy],[xy]]+ [[xxxyy] - [xxxyy]]\\
                              &= -[[xxy],[xy]]= -[xxyxy] \in Span [W_i], \; i = 5, 6.
         \end{array}
         \end{equation}
Hence this composition is solvable. All remaining $2$-chains on $W_i$ have length $\geq 6 > m(N_i)$, i = 5,6, hence the corresponding compositions are solvable. It follows that
          $[W_i]$ is a GS basis of the Lie ideal $J_i$.

The cases (7.x.i), where $13 \leq i \leq 30$. Then each of the pairs $(N,W)$ is a nontrivial disconnected extension of a standard Lyndon pair $(N^{\star},W^{\star})$, therefore
by Corollary \ref{NdisconnCor}, $[W_i]$ is not a Lyndon-Shirshov basis of the Lie ideal $J_i=([W_i])_{Lie}$ of $\Lie(X)$.
 Moreover, the class $\mathfrak{C}(X, W_i)$, $13 \leq i \leq 30$
does not contain the enveloping algebra $U=U\mathfrak{g}$ of any graded Lie algebra $\mathfrak{g}$.
Explicit details for  the Lyndon-Shirshov basis of the Lie ideals $J_i$, the corresponding monomial algebras $\mathfrak{g}$ and their enveloping $U$ in these cases are given in Corollary \ref{thm4cor}.

 It remains to study the cases (7.5.10), (7.5.11), (7.6.12), where the Lyndon pair $(N,W)$ has connected set $N$, but $[W]$ is not a Gr\"{o}bner-Shirshov basis of the Lie ideal $J$.
  Detailed information for each of these cases is given in  Remark \ref{thm4rem}.
  \end{proof}
 \begin{remark}
 \label{thm4rem}
 In notation and assumptions of Theorem IV.
 \begin{enumerate}
 \item
 The Lyndon pair $(N_{10}, W_{10})$ given in (7.5.10) defines a Lie algebra $\mathfrak{g}$ whose enveloping $U= U\mathfrak{g}$
 has Lyndon atoms $\widetilde{N}$ and relations $[\widetilde{W}]$, where
\begin{equation}
\label{N10eq}
\begin{array}{l}
\widetilde{N} =\{ x < xxy < xy  < xy^2 < xy^3 < y\} \\
\widetilde{W} =\{ x^3y, \; x^2y^2,\;x^2xy,\;\; xyx^2,\;xy^2xy^3,\;xy^4\}.
\end{array}
 \end{equation}
This is exactly the Lyndon pair $(N_1, W_1)$ given by (6.4.1). Moreover, the GS- basis of the Lie ideal $J_{10}$ is
$[W_1]$,
so $\mathfrak{g}= \mathfrak{g}_{W_1}$
has basis $[N_1]$,
and is $6$ dimensional,  $U_{10}\in \mathfrak{C}(X, W_1)$ is an AS regular algebra with $\gldim U_{10} = 6$.

\item
The Lyndon pair $(N_{11}, W_{11})$ given in (7.5.11) defines a Lie algebra $\mathfrak{g}$ whose enveloping $U= U\mathfrak{g}$
 has Lyndon atoms $\widetilde{N}$ and relations $[\widetilde{W}]$, where
 \[
 \begin{array}{l}
\widetilde{N} =\{ x < x^3y < x^2y < xy  < xy^2< y\} \\
\widetilde{W} =\{ x^4y, \; x^2y^2,\;x^3yx^2y, \;x^2yxy,\;\; xyx^2,\;xy^3\}.
\end{array}
 \]
 The monomial algebra $A_{\widetilde{W}}$ is isomorphic to $A_{W_1}$, corresponding to case (6.4.1).
 $U \in \mathfrak{C}(X,\widetilde{W})$ is an AS-regular algebra of $\gldim U = 6$.
 \item
For the Lyndon pair $(N_{12}, W_{12})$ given in (7.6.12) one has
$\mathfrak{g}= \Lie /([ W_{12}]) \simeq \Qcal_5$, the second type filiform Lie algebra of dimension $6$, $U = U\Qcal_5\in \mathfrak{C}(X,W(\Qcal_5))$
is an AS regular algebra with $\gldim U = 6$.
\end{enumerate}
\end{remark}
 \begin{proof}
We give a proof of (1).
 Consider the pair  $(N_{10}, W_{10})$, given in
 (7.5.10). For simplicity we set $N=N_{10}, W = W_{10}, J= J_{10}, I = I_{10}$.
         The two-chain $\omega = xxxyy= (xxxy)y = x(xxyy)$ on $W$ gives a composition of overlap, $(xxxy, xxyy)_{\omega}$, as in (\ref{xxxyy_eq}), so the same computations imply that $(xxxy, xxyy)_{\omega}= -[xxyxy]\in  J$.
          By assumption $xxyxy\in N,$ thus $[xxyxy] \in [N]\bigcap J$, and therefore
 the composition $(xxxy, xxyy)_{\omega}$ is not solvable. It follows that $[W_{10}]$ is not a Gr\"{o}bner Shirshov basis of the Lie ideal $J_{10}$.
Consider $N_0 = N\setminus \{xxyxy\}$. It is easy to show
that the (actual) Lyndon pair  corresponding to the ideal $I_{10}$ is the pair $(\widetilde{N}, \widetilde{W})$ given in (\ref{N10eq}).
 But this is exactly the Lyndon pair (6.4.1) $(N_1, W_1)$.
 Hence the the Gr\"{o}bner-Shirshov basis of  $J_{10}$ is $[\widetilde{W}]=[W_1]$, see (6.4.1),
$\mathfrak{g}= \mathfrak{g}_{W^1}$ is $6$ dimensional and $U_{10}$ is an AS regular algebra of global dimension $6$, $U_{10} \in \mathfrak{C}(X, \widetilde{W})$.

Parts (2) and (3) are verified by similar arguments. In the case (2), (7.5.11) one shows that $(xxyy, xyyy)_{\omega}= [xy,xyy]\in J_{11}\bigcap N_{11}$;
 and in the case (3) (7.6.12) the composition $(xxy, xyyyy)_{\omega}=[xyxyyy]\in J_{12}\bigcap N_{12}$. Analogous argument as in (1) completes the proof.
\end{proof}

\begin{openquestion}
\label{openquestion1}
Determine which of the three non-standard Lyndon pairs $(N_{10}, W_{10})$   $(N_{11}, W_{11})$ and $(N_{12}, W_{12})$, see (7.5.10), (7.5.11), (7.6.12), respectively,  is regular. In other words which of
the classes $\mathfrak{C} (X, W_i)$ with ($10 \leq i \leq 12$ contains the enveloping $U =
     U\mathfrak{g}$ of some graded Lie algebra $\mathfrak{g}$. In this case (i) $U$ is an s.f.p. AS-regular algebra of global dimension $d= |N|=7,$ and the set $[N_i]$ is a $K$-basis of  $\mathfrak{g}$.

More generally, it is a nontrivial question to decide whether given a non-standard Lyndon pair $(N, W)$, with connected $N$, the pair is regular.
\end{openquestion}

\begin{corollary}
\label{thm4cor}
Notation and assumptions as in Theorem IV, part II.
Let $(N, W)$ be a Lyndon pair listed in subsection \ref{list7}, denote its connected component by $N^{\star} =N_{con}$, $(N^{\star}, W^{\star})$ is the corresponding Lyndon pair.
\begin{enumerate}
\item
\label{thm4cor1}
In each of the cases (7.8.23),  (7.9.25),   (7.11.28), (7.12.29), (7.13.30)
\[\begin{array}{ll}
N^{\star}&=N(\Lcal_{3})= \{x < xy <  xyy < y\},\quad
W^{\star}=W(\Lcal_{3})= \{xxy, xyxy^{2}, xy^{3}\}.\\
J&= ([W])_{Lie} = ([W(\Lcal_{3})])_{Lie}, \quad
\mathfrak{g}= \Lie(X)/J \simeq \Lcal_{3}, \\
U &= U\mathfrak{g}\in \mathfrak{C}(X, W(\Lcal_{3})),\; \text{is an AS regular algebra with}\; \gldim U = 4.
   \end{array}
          \]
\item
\label{thm4cor2}
In each of the cases (7.7.20), (7.8.21)
\[\begin{array}{ll}
N^{\star} &= \{x < x^2y <  xy < xy^2< y\}= L(3)= N(\mathfrak{L}(3)), \; W^{\star} = W(3)=W(\mathfrak{L}(3)). \\
   J&= ([W])_{Lie} = ([W(3)])_{Lie}=J_{3}, \quad
\mathfrak{g}= \Lie(X)/J \simeq \mathfrak{L}(3), \\
U &= U\mathfrak{g}\in \mathfrak{C}(X, W(3)),\; \text{is an AS regular algebra with}\; \gldim U = 5.
   \end{array}
          \]
\item
\label{thm4cor3}
In each of the cases    (7.10.26), (7.11.27)
\[\begin{array}{ll}
N^{\star}&=N(\Lcal_{4})= \{x < xy <  xyy < xyyy< y\},\\
W^{\star}&=W(\Lcal_{4})= \{xy^kxy^{k+1}\mid 0 \leq k \leq 2\}\bigcup\{xy^4\}.\\
   J&= ([W])_{Lie} = ([W(\Lcal_{4})])_{Lie}, \quad
\mathfrak{g}= \Lie(X)/J \simeq \Lcal_{4}, \\
U &= U\mathfrak{g}\in \mathfrak{C}(X, W(\Lcal_{4})),\; \text{is an AS regular algebra with}\; \gldim U = 5.
   \end{array}
          \]
      \item
      \label{thm4cor4}
       In each of the cases (7.7.15), (7.9.24)
\[
 \begin{array}{ll}
N^{\star}&= N(\Lcal_{5})=\{x < xy <  xyy < xyyy< xyyyy< y\},\\
 W^{\star}&=W(\Lcal_{5})= \{xy^kxy^{k+1}\mid 0 \leq k \leq 3\}\bigcup\{xy^5\}.\\
   J&= ([W])_{Lie} = ([W(\Lcal_{5})])_{Lie}, \quad
\mathfrak{g}= \Lie(X)/J \simeq \Lcal_{5}, \\
U &= U\mathfrak{g}\in \mathfrak{C}(X, W(\Lcal_{5}))\; \text{is an AS regular algebra with}\; \gldim U = 6.
   \end{array}
   \]

\item
\label{thm4cor5}
In each of the cases  (7.7.16),  (7.7.18),  (7.8.22)
\[\begin{array}{ll}
N^{\star}&=N(\Qcal_{5})= \{x < xy <  xyxyy <  xyy < xyyy< y\},\\
W^{\star}&=W(\Qcal_{5})= \{xxy, (xy)^2xyy, xy(xyy)^2,  xy^2xy^3, xy^4\}.\\
   J&= ([W])_{Lie} = ([W(\Qcal_{5})])_{Lie}, \quad
\mathfrak{g}= \Lie(X)/J \simeq \Qcal_{5}, \\
U &= U\mathfrak{g}\in \mathfrak{C}(X, W(\Lcal_{4})),\; \text{is an AS regular algebra with}\; \gldim U = 6.
   \end{array}
          \]
\item
\label{thm4cor6}
Each of the cases  (7.6.14),  (7.7.19)   "degenerates" to (6.4.2), more precisely
\[\begin{array}{ll}
N^{\star}&= \{x < x^2y < x^2y^2 < xy < xy^2 < y\} = N_2, \; \text{see case (6.4.2)} \\
W^{\star}&=  \{x^3y,\; xy^3,\;x^2yxy, \; xyxy^2,\; x^2yx^2y^2, \;x^2y^2xy\}=W_{2},\text{see case (6.4.2)} \\
   J&= ([W])_{Lie} = ([W_2])_{Lie}, \quad\text{has a G-S basis the set }\;[W_2]\\
\mathfrak{g}&= \Lie(X)/J \simeq \Lie(X)/([W_2])_{Lie} \quad\text{has a $K$-basis}\; [N^{\star}] \\
U &= U\mathfrak{g}\in \mathfrak{C}(X, W_2),\; \text{is an AS regular algebra with}\; \gldim U = 6.
   \end{array}
          \]
\item
\label{thm4cor7}
The case (7.7.17) "degenerates" to (6.4.1), more precisely
\[\begin{array}{ll}
N^{\star}&=\{x < x^2y  < xy < xy^2< xy^3 < y\} = N_1,\; \text{see (6.4.1)} \\
W^{\star}&= \{x^3y,\;x^2y^2\;x^2yxy,  xyxy^2, \;xy^2xy^3\; xy^4\}= W_{1}, \text{see  (6.4.1)}\\
   J&= ([W])_{Lie} = ([W_1])_{Lie}, \quad\text{has a G-S basis the set }\;[W_1]\\
\mathfrak{g}&= \Lie(X)/J \simeq \Lie(X)/([W_1])_{Lie} \quad\text{has a $K$-basis}\; [N^{\star}] \\
U &= U\mathfrak{g}\in \mathfrak{C}(X, W_1),\; \text{is an AS regular algebra with}\; \gldim U = 6.
   \end{array}
          \]
           \end{enumerate}
 \end{corollary}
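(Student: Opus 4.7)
The plan is to treat each of the seven cases uniformly by applying Proposition \ref{FiliformWProp2} to the pair $(N,W)$ in question together with its connected component $(N^{\star},W^{\star})$. For every Lyndon pair $(N,W)$ listed in the corollary the set $N$ is disconnected, and the connected component $N_{\mathrm{con}}$ is already identified explicitly in the list in Subsection \ref{list7}. So the first step is to verify, by inspection of those lists, that $(N,W)$ is indeed a nontrivial disconnected extension of $(N^{\star},W^{\star})$ in the sense of Definition \ref{disconnected_ext_def}, i.e.\ $N^{\star}=N_{\mathrm{con}}\subsetneq N$.

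The second step is to confirm, for each target $(N^{\star},W^{\star})$, that it is a \emph{standard} Lyndon pair, i.e.\ that $[W^{\star}]$ is a Gr\"{o}bner--Shirshov basis of $([W^{\star}])_{Lie}$, so that Proposition \ref{FiliformWProp2} applies. For parts (\ref{thm4cor1}), (\ref{thm4cor3}), (\ref{thm4cor4}) standardness follows from Corollary \ref{MinimalWCor} and Theorem \ref{filiformProp}(2.a) applied to $\Lcal_3$, $\Lcal_4$, $\Lcal_5$; for part (\ref{thm4cor5}) it follows from Theorem \ref{filiformProp}(2.b) applied to $\Qcal_5$; for part (\ref{thm4cor2}) it follows from Corollary \ref{freeNilp_cor} applied to $\mathfrak{L}(3) =\mathfrak{F}_3$; and for parts (\ref{thm4cor6}) and (\ref{thm4cor7}) standardness of $(N_2,W_2)$ and $(N_1,W_1)$ was already established in the proof of Theorem IV, part I(b), using Proposition \ref{SimplifyProp}(3.a) because $m(N_i)=4<6=c(W_i)$ for $i=1,2$.

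The third step is to invoke Proposition \ref{FiliformWProp2} itself. Part (1) of that proposition gives $[N^{\star}]$ as a $K$-basis of $\mathfrak{g}=\Lie(X)/J$ and the equality $J=([W^{\star}])_{Lie}$, so $\mathfrak{g}\cong \mathfrak{g}_{W^{\star}}$ (which is one of $\Lcal_3,\Lcal_4,\Lcal_5,\Qcal_5,\mathfrak{L}(3)$, or the six-dimensional algebra $\mathfrak{g}_{W_1}$ resp.\ $\mathfrak{g}_{W_2}$). Part (2) then tells us that $U=U\mathfrak{g}$ lies in $\mathfrak{C}(X,W^{\star})$, is standard finitely presented, and is Artin-Schelter regular with $\gldim U=|N^{\star}|$, matching the claimed global dimensions $4,5,5,6,6,6$ in the seven cases.

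The only genuinely nontrivial input is the standardness of $(N^{\star},W^{\star})$ at the Lie level; once that is known the rest is a direct citation of Proposition \ref{FiliformWProp2}. The combinatorial check in the first step (identifying $N_{\mathrm{con}}$ for each of the $18$ explicit pairs) is routine from the lists in Subsection \ref{list7}, and the only place where one might worry is in parts (\ref{thm4cor6}) and (\ref{thm4cor7}), where $N^{\star}$ is not one of the classical algebras $\Lcal_n$, $\Qcal_n$, $\mathfrak{L}(m)$; but there the standardness of $(N_1,W_1)$ and $(N_2,W_2)$ was already recorded in the proof of Theorem IV(I), so no new argument is needed.
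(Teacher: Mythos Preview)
Your approach for parts (\ref{thm4cor1})--(\ref{thm4cor5}) matches the paper's: these cases fall directly under Proposition~\ref{FiliformWProp2}(3), since $N^{\star}$ is one of $N(\Lcal_d)$, $N(\Qcal_d)$, or $L(m)$, and that part of the proposition explicitly yields the ideal equality $J=([W^{\star}])_{\mathrm{Lie}}$ together with $\mathfrak{g}\cong\mathfrak{g}_{W^{\star}}$. (You cite ``Part~(1)'' for this, but the ideal equality actually comes from Part~(3); that is only a mislabelling.)

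There is, however, a genuine gap in your treatment of parts (\ref{thm4cor6}) and (\ref{thm4cor7}). You argue that standardness of $(N_1,W_1)$ and $(N_2,W_2)$, together with Proposition~\ref{FiliformWProp2}(1)--(2), gives $J=([W^{\star}])_{\mathrm{Lie}}$. It does not. Parts (1)--(2) only tell you that $W^{\star}$ is the obstruction set for $U\mathfrak{g}$ and that $[N^{\star}]$ is a $K$-basis of $\mathfrak{g}$; they do \emph{not} identify the reduced Gr\"obner--Shirshov basis of $J$. Remark~\ref{Obstructionsrem} (and Example~\ref{mon-nonmonExample}) make exactly this point: two distinct Lie ideals can share the same obstruction set, so knowing that $(N^{\star},W^{\star})$ is standard and that $W^{\star}$ is the obstruction set of $J$ is not enough to conclude $J=([W^{\star}])_{\mathrm{Lie}}$.

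The paper closes this gap with an extra observation: for $W_1$ and $W_2$, every $w\in W^{\star}$ is $\prec$-minimal in its multi-degree component $L_{\alpha}$. Since $J=([W])_{\mathrm{Lie}}$ is $\mathbb{Z}^2$-graded, each element of the reduced GS basis of $J$ is multi-homogeneous with leading term some $w\in W^{\star}$; minimality of $w$ in $L_{\alpha}$ forces that element to equal $[w]$. Hence the reduced GS basis of $J$ is $[W^{\star}]$, and $J=([W^{\star}])_{\mathrm{Lie}}$ follows. This argument (the analogue of the one behind Corollaries~\ref{Obstructionscor1} and~\ref{Obstructionscor2}) is what you are missing for (\ref{thm4cor6}) and (\ref{thm4cor7}).
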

\begin{proof}
Recall that by Proposition \ref{FiliformWProp2}  a nontrivial disconnected extension  $(N,W)$  of a standard Lyndon pair $(N^{\star},W^{\star})$, (here $|N^{\star}|= |N_{con}|< |N|$) does not "contribute" new classes of Lie algebras, whenever $N^{\star}$ is one of the following
(a) $N^{\star}= N(\Lcal_{d})$; (b) $N^{\star}= N(\Qcal_{d})$; (c) $N^{\star}= L(m)$, the set of all Lyndon words of length $\leq m$.
Clearly,  parts (\ref{thm4cor1}) through (\ref{thm4cor5}) follow straightforwardly from   Proposition \ref{FiliformWProp2}.
To prove parts (\ref{thm4cor6}) and (\ref{thm4cor7}) one uses first Proposition \ref{FiliformWProp2} to deduce that the correct obstruction set for $U\mathfrak{g}$ is
exactly $W^{\star}$. Note that  every word $w \in W^{\star}$ is minimal (w.r.t.$\prec$) in its multi-degree component $L_{\alpha}$, where $\deg (w) =\alpha$, therefore
$[w]$ is the unique Lie element in $K[L_{\alpha}],$ whose highest monomial is $w$. This implies that the monomial Lie algebra $\mathfrak{g}= \Lie(X)/([W])_{Lie}$ is isomorphic to the standard monomial Lie algebra $\mathfrak{g}_{W^{\star}}= \Lie(X)/([W^{\star}])_{Lie}$ occurring in the list \ref{list6}.
          \end{proof}
\begin{remark}
We have seen that
each
non-standard Lyndon pair $(N, W)$ with $|N|=7$ "degenerates" to a standard Lyndon pair $(N^{\star}, W^{\star})$ with
$|N^{\star}| < 7$.
 Moreover, in these cases the monomial Lie algebra $\mathfrak{g}= \Lie(X)/([W])_{Lie}$ is isomorphic to some (already known) standard monomial Lie algebra $\mathfrak{g}_{W^{\star}} = \Lie(X)/([W^{\star}])_{Lie}$  with a $K$-basis $[N^{\star}] \subsetneqq [N]$, and the enveloping $U\mathfrak{g} \in \mathfrak{C}(X, W^{\star})$ is AS-regular with $\gldim U < d.$
However, Example \ref{mon-nonmonExample} shows that when $d$ is large enough, (in this case $d=18$) a Lyndon pair $(N, W)$, with $|N|= d$  may define a monomial Lie algebra $\mathfrak{g}= \Lie(X)/([W])_{Lie}$,
such that the reduced Gr\"{o}bner-Shirshov basis of the Lie ideal $J= ([W])_{Lie}$ contains Lie elements which are not Lyndon Lie monomials. This way we can obtain Artin-Schelter regular algebras $U = U\mathfrak{g}$ with obstruction set $\widetilde{W}\neq W$, and a set of atoms $\widetilde{N}\subsetneqq N$ (so $\gldim U < d$) and explicitly given more sophisticated relations which are non-trivial linear combinations of Lyndon-Lie monomials.  In this case the corresponding Lyndon pair $(\widetilde{N}, \widetilde{W})$ is regular, but it may be non-standard.

It is a nontrivial question to decide whether a given non-standard Lyndon pair $(N, W)$, with connected $N$, is regular, see
Open Question \ref{openquestion1}.
\end{remark}

\subsection*{Acknowledgements}
This paper was written during my visit to Max Planck Institute for Mathematics (MPIM), Bonn in 2019. I thank MPIM for the wonderful creative and inspiring atmosphere.  My cordial thanks to the referee of my paper for their amazing review with numerous corrections and suggestions for improvements.


\begin{thebibliography}{1}

\bibitem{Anick82}
{\sc{D.~Anick}}, {\em Noncommutative graded algebras and their
Hilbert series},
  J. Algebra {\bf 78} (1982),~120--140.


\bibitem{Anick85}
{\sc{D.~Anick}}, {\em On monomial algebras of finite global
dimension},
  Trans. AMS {\bf 291} (1985), ~291--310.


\bibitem{Anick86}
{\sc{D.~Anick}}, {\em On the homology of associative algebras},
  Trans. AMS {\bf 296} (1986),~641--659.



\bibitem{AS}
{\sc{M.~Artin and W.~Schelter}}, {\em Graded algebras of global
dimension 3},
  Adv. in Math. {\bf 66} (1987),~171--216.


\bibitem{ATV1}
{\sc{M.~Artin, J.~Tate, and M.~Van~den Bergh}}, {\em Some algebras
associated to automorphisms of elliptic curves}, in The Grothendiek
Festschrift, Vil I, Progr. Math. \textbf{86}, Birkh\"{a}user,
Boston, 1990, ~33--85.

\bibitem{ATV2}
{\sc{M.~Artin, J.~Tate, and M.~Van~den Bergh}}, {\em Modules over
regular algebras of dimension $3$}, Invent. Math. {\bf 106} (1991),
~335--388.

\bibitem{Bel}
{\sc{Belmans, Pieter}}, {\em On non-quadratic $4$-dimensional Artin-Schelter regular algebras and 3-folds}
 (2017) Preprint


\bibitem{B}
{\sc{G.~ M.~ Bergman}}, {\em The diamond lemma for ring theory},
  Adv. in Math. {\bf 29} (1978), ~178--218.

\bibitem{BLSZ}{Bokut, L.A., Latyshev, V., Shestakov, I. and Zelmanov, E.}
 eds., \textbf{2009}. {\em Selected Works of A.I. Shirshov}, Springer Science and Business Media.


 \bibitem{Bokut14}
 {\sc{Bokut, Leonid A., and Yuqun Chen}}, {\em Gro\"{o}bner-Shirshov bases and their calculation},  Bulletin of Mathematical Sciences 4.3 (2014): 325--395.

 \bibitem{Bokut20}
 {\sc{Bokut, Leonid, et al}}, {\em Gr\"{o}bner-Shirshov Bases: Normal Forms, Combinatorial And Decision Problems In Algebra}, World Scientific, 2020.

\bibitem{ChenTang}
{\sc{Chen, Yu, Yu Li, and Qingyan Tang}}, {\em Gr\"{o}bner-Shirshov bases for some Lie algebras}, Siberian Mathematical Journal 58.1 (2017): 176-/182



\bibitem{FialWag}
{\sc{Fialowski, ~A., and F.~ Wagemann}},
{\em Cohomology and deformations of the infinite-dimensional filiform Lie algebra $\mathfrak{M}_0$}, Journal of Algebra, {\bf 318.2} (2007), ~1002--1026.

\bibitem{FlVa}
{\sc{G.~ Fl{\o}ystad, J.~E.~ Vatne}}
{\em Artin--Schelter regular algebras of dimension five}, Banach Center Publications, {\bf 93} (2011), ~19--39.


\bibitem{GI89}
{\sc{T.~ Gateva-Ivanova}}, {\em Global dimension of associative
algebras}, Applied Algebra, Algebraic Algorithms and
Error-Correcting Codes, Lecture Notes in Computer Science,
\textbf{357} (1989), ~213--229,


\bibitem{GI94}
{\sc{Gateva--Ivanova, ~T.}},
{\em{Noetherian properties of skew polynomial rings with binomial
      relations}},
Trans.\ Amer. Math. Soc {\bf{343}} (1994), 203--219.

\bibitem{GI96}
{\sc{Gateva--Ivanova, ~T.}},
{\em{Skew polynomial rings with binomial relations}},
J.\ Algebra {\bf{185}} (1996), 710--753.

\bibitem{GI12}
{\sc{T.~ Gateva-Ivanova}}, {\em Quadratic algebras, Yang-Baxter
equation, and Artin-Schelter regularity},  Adv. in Mathematics {\bf
230} (2012) 2152-2175.

\bibitem{TGIBedlewo}
{\sc{T.~ Gateva-Ivanova}},  {\em Associative algebras and Lie algebras defined by Lyndon words}
Talk at the
 "CLASSICAL ASPECTS OF RING THEORY AND MODULE THEORY",
 Bedlewo, Poland, July 14-20, 2013.


\bibitem{GIF}
{\sc{T.~ Gateva-Ivanova, G. ~Fl{\o}ystad}},  {\em Monomial algebras
defined by Lyndon words}, J. Algebra \textbf{403} (2014) ~470--496

\bibitem{GIVB}
{\sc{T.~ Gateva-Ivanova, \ and M.~ Van den Bergh}}
{\em{Semigroups of $I$-type}},
J.\ Algebra {\bf{206}} (1998), ~97--112.

\bibitem{jacobson} {\sc{N.~ Jacobson}}, {\em Lie Algebras}, 1979,
 Dover Publications, Inc, New York


\bibitem{joveski} {\sc{Z.~ Joveski}}, {\em Lyndon words and their applications in algebra},
 American University in Bulgaria, Blagoevgrad, Bulgaria. (No. 315)
Senior Thesis, (2014),  (Unpublished bachelor`s thesis in Mathematics, Advisor T. Gateva-Ivanova).



\bibitem{LR}
{\sc{P.~ Lalonde, A.~ Ram}}, {\em Standard Lyndon bases of Lie
algebras and enveloping algebras},
 Trans. AMS {\bf 347} (1995), pp.~1821--1830.


\bibitem{Levasseur}
{\sc{T.~ Levasseur}}, {\em Some properties of non-commutative
regular graded rings },  Glasgow Mathematical Journal 2009 -
Cambridge Univ Press.

\bibitem{LiWang}
{\sc{Li, Jun, and Xin Wang}}, {\em Some five-dimensional Artin-Schelter regular algebras obtained by deforming a Lie algebra},
Journal of Algebra and Its Applications \textbf{15.04} (2016): 1650060.

\bibitem{Li}
{\sc{Li, Jun}}, {\em The quadratic algebras in Artin-Schelter regular algebras of dimension five},
 Communications in Algebra (2019): 1--10, DOI: 10.1080/00927872.2018.1498861




\bibitem{LPWZ}
{\sc{D.-M.~Lu, J.H. ~Palmieri, Q.-S.~Wu, J.J.~Zhang}}, {\em Regular
algebras of dimension 4 and their $A_{\infty}$-Ext-Algebras}, Duke
Math. J. {\bf 137} (2007) ~537--584.


\bibitem{Lo}
{\sc{M.~Lothaire}}, {\em Combinatorics on Words}, Encyclopedia if
Mathematics and its Applications, \textbf{v.17 } GC Rota, Editor
Addison-Wesley Publishing Company, 1983.


\bibitem{Lo02}
{\sc{M.~Lothaire}}, {\em Algebraic Combinatorics on Words},
Encyclopedia if Mathematics and its Applications, \textbf{v.90 } GC
Rota, Cambridge University Press, 2002.


\bibitem{PP}
{\sc{A.~Pollishchuk, L.~Positselski}}, {\em Quadratic Algebras}, in
University Lecture series, \textbf{v.37 } American Mathematical Society, 2005.



\bibitem{Priddy}
{\sc{St.~ Priddy}}, {\em Koszul Resolutions},  Trans. Amer. Math. Soc.  {\bf 152} (1970) 39--60.



\bibitem{Shelton-Tingey}
{\sc{Shelton, Brad, and Craig Tingey}}, {\em On Koszul algebras and a new construction of Artin-Schelter regular algebras},
 Journal of Algebra \textbf{241.2} (2001) 789-798.


\bibitem{Shirshov58}
{\sc{A.~I.~Shirshov}}, {\em On free Lie rings},
 Matematicheskii Sbornik,
  45 (1958), no. 2, 113-122.




\bibitem{M.Smith}
{\sc{M.~Smith}}, {\em Universal enveloping algebras with  algebras
with subexponential but not polynomially bounded growth},
  Proc. Amer. Math. Soc {\bf 60} (1976), ~22--24.

\bibitem{Re}
{\sc{C.~Reutenauer}}, {\em Free Lie algebras},
 London Math. Soc. Monographs (N.S.){\bf 7} Oxford Univ. Press, 1993.

\bibitem{Michel-Tate}
{\sc{J.~Tate,  M.~Van~den Bergh}}, {\em Homological properties of
Sklyanin algebras},
  Invent. Math. {\bf 124} (1996) ~619--647.

 \bibitem{MichelAiM17}  {\sc{Raedschelders, Theo, and   Michel ~Van~den Bergh}}, {\em The Manin Hopf algebra of a Koszul Artin-Schelter regular algebra is
     quasi--hereditary}, Advances in Mathematics {\bf 305} (2017) ~ 601--660.


\bibitem{ZhouLu}
{\sc{Zhou, G-S., and D-M. Lu}}, {\em Artin-Schelter regular algebras of dimension five with two generators}, Journal of Pure and Applied Algebra 218.5 (2014), 937--961.

\end{thebibliography}
\end{document}